\documentclass[11pt,a4paper]{amsart}

\usepackage[smalltableaux, centertableaux]{ytableau}
\usepackage{relsize}
\usepackage[foot]{amsaddr}
\usepackage{amsfonts,epsfig,stmaryrd}
\usepackage{newlfont,amsfonts,amssymb,amsmath,amsthm,amsgen,amscd,datetime,dsfont,hyperref,tensor}

\usepackage[small,nohug,heads=littlevee]{diagrams}
\usepackage[normalem]{ulem}
\diagramstyle[labelstyle=\scriptstyle]

\usepackage{multicol,picinpar,enumerate,cleveref,mathabx}
\setlength{\footskip}{1cm}
\setlength{\headsep}{1.5cm}
\setlength{\textwidth}{14.5cm}
\setlength{\textheight}{22cm}
\setlength{\topmargin}{.0cm}
\setlength{\oddsidemargin}{10mm}
\setlength{\evensidemargin}{10mm}

\setlength{\marginparsep}{0mm}
\setlength{\marginparwidth}{3cm}

\hyphenation{ma-ni-fold}
\hyphenation{cor-res-pon-ding}

\DeclareMathOperator{\Wedge}{{\mathlarger{\mathlarger{\mathlarger{\wedge}}}}}

\usepackage{euscript,color}

\DeclareMathOperator{\Span}{span}

\DeclareMathOperator{\Id}{Id}

\newcommand{\vf}{\varphi}

\newcommand{\p}{\mathfrak{p}}

\newcommand{\tM}{\widetilde{M}}
\newcommand{\tg}{\widetilde{\g}}
\renewcommand{\th}{\widetilde{\h}}
\newcommand{\tnab}{\widetilde{\nabla}}
\newcommand{\tR}{\widetilde{R}}
\newcommand{\tRE}{\widetilde{R}^{\E}}

\newcommand{\g}{\mathfrak{g}}

\newcommand{\h}{\mathfrak{h}}
\newcommand{\z}{\mathfrak{z}}
\newcommand{\s}{\mathfrak{s}}
\newcommand{\m}{\mathfrak{m}}

\renewcommand{\sl}{\mathfrak{sl}}
\newcommand{\hol}{\mathfrak{hol}}
\newcommand{\stab}{\mathfrak{stab}}
\newcommand{\Hol}{\mathrm{Hol}}
\newcommand{\+}{\oplus}

\newcommand{\so}{\mathfrak{so}}

\newcommand{\co}{\mathfrak{co}}

\newcommand{\gl}{\mathfrak{gl}}

\newcommand{\SO}{\mathbf{SO}}
\renewcommand{\O}{\mathbf{O}}
\newcommand{\SL}{\mathbf{SL}}

\renewcommand{\span}{\mathrm{span}}
\newcommand{\hook}{\makebox[7pt]{\rule{6pt}{.3pt}\rule{.3pt}{5pt}}\,}

\newcommand{\I}{\mathrm{I}}

\newcommand{\1}{\mathbf{1}}
\newcommand{\e}{\mathrm{e}}
\renewcommand{\d}{\mathrm{d}}

\newcommand{\End}{\mathrm{End}}





\newcommand{\R}{\mathbb{R}} 
\newcommand{\N}{\mathbb{N}} 

\newcommand{\E}{\mathcal{E}}

\newcommand{\gE}{g^{\mathcal{E}}}
\newcommand{\tnabE}{\tnab^{\mathcal{E}}}
\newcommand{\nabE}{\nabla^{\mathcal{E}}}

\renewcommand{\H}{H}

\newcommand{\<}{\langle}
\renewcommand{\>}{\rangle}



\theoremstyle{definition}
\newtheorem{definition}{Definition}[section]
\newtheorem{remark}[definition]{Remark}
\newtheorem{example}[definition]{Example}
\newtheorem*{example*}{Example}
\newtheorem*{remark*}{Remark}

\theoremstyle{plain}
\newtheorem{lemma}[definition]{Lemma}
\newtheorem*{lem*}{Lemma}
\newtheorem{proposition}[definition]{Proposition}
\newtheorem{corollary}[definition]{Corollary}
\newtheorem{theorem}[definition]{Theorem}
\newtheorem*{theorem*}{Theorem}
\newtheorem{conjecture}[definition]{Conjecture}
\newtheorem*{conjecture*}{Conjecture}

\numberwithin{equation}{section}

\setcounter{tocdepth}{1}

\begin{document}

\title[Lorentzian  homogeneous structures with indecomposable holonomy]{Lorentzian  homogeneous structures\\[1mm]with indecomposable holonomy}

\thanks{This work was supported by 
 the Australian Research
Council (Discovery Program DP190102360).}
 \author[Steven Greenwood]{Steven Greenwood}
\email{steven.greenwood@adelaide.edu.au}

\author[Thomas Leistner]{Thomas Leistner}\address{School of Computer and Mathematical Sciences, University of Adelaide, SA~5005, Australia}\email{thomas.leistner@adelaide.edu.au}


\subjclass[2010]{Primary 
53C30;  Secondary 53C29, 53C50, 53B30}
\keywords{Lorentzian homogeneous spaces, homogeneous structures, Ambrose--Singer connections, connections with torsion,  symmetric spaces, 2-symmetric spaces, holonomy groups} 

\begin{abstract}
For a Lorentzian homogeneous space, we study how algebraic conditions on the isotropy group affect the geometry and curvature of the homogeneous space. More specifically, we prove that a Lorentzian locally homogeneous space is locally isometric to a plane wave if it admits an Ambrose--Singer connection with indecomposable, non-irreducible holonomy. This generalises several existing results that require a certain algebraic type of the torsion of the Ambrose--Singer connection and moreover is in analogy to the fact that a Lorentzian homogeneous space with irreducible isotropy has constant sectional curvature. In addition, we prove results about Lorentzian connections with parallel torsion and for $2$-symmetric connections.
 \end{abstract}

\maketitle


\section{Introduction}
A semi-Riemannian manifold $(M,g)$ is {\em homogenous} if it admits a group of isometries that acts transitively on $M$. Given a transitive group of isometries $G$ and a point  $o\in M$, the {\em isotropy group} is the subgroup of $G$ that fixes $o$. Up to conjugation in $G$, the isotropy in $G$ does not depend on the point, but of course, it depends on  $G$. 
Since $G$ acts transitively, $M$ is diffeomorphic to the quotient $G/H$, where $H$ is the isotropy of $o$.  Moreover, by fixing an semi-orthonormal  basis of $T_oM$, the isotropy $H$ is represented as a subgroup of $\O(t,s)$, where $(t,s)$ is the signature of the metric $g$, by $H\ni \phi\mapsto \d \phi|_o$ and by fixing a basis in $T_oM$.  This is called the {\em isotropy representation of $H$}.
A homogeneous space is {\em reductive} if there is a transitive group of isometries $G$ with isotropy $H$ and in the Lie algebra $\g$ of $G$ there is an $\mathrm{Ad}_H$-invariant complement $\m$ to the Lie algebra $\h$ of $H$. Riemannian homogeneous spaces are always reductive.

One may ask to which extent algebraic properties of the isotropy group, represented in $\O(t,s)$, determine the geometry of $(M,g)$, for example when $(M,g)$ has a large  isotropy group.  In the case when `large' means `irreducible as a subgroup of $\O(t,s)$', the situation is rather flexible in the Riemannian setting, see  \cite[Chapter 7]{besse87} for an overview, but quite rigid for indefinite metrics, where often irreducibility  forces the homogeneous space to be locally symmetric,~see for example \cite{CortesMeinke17,CortesMeinke18}. The Lorentzian case is particularly rigid, as the following remarkable result shows.
\begin{theorem}[Zeghib \cite{zeghib04}]
\label{zeghibtheo}
If a Lorentzian homogeneous space of dimension $m\ge 3$ admits an irreducible isotropy group, then it has constant sectional curvature.
\end{theorem}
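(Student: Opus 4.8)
The plan is to show that an irreducible isotropy representation forces the isotropy group to contain the whole connected Lorentz group, and then to combine this with the representation theory of that group on algebraic curvature tensors and with Schur's lemma. Write $M=G/H$ for a transitive group of isometries $G$ with isotropy $H$ at a point $o$, and identify $H$ with its isotropy representation, so that $H\subseteq\O(1,m-1)$ acts on $(T_oM,g_o)\cong\R^{1,m-1}$, by hypothesis irreducibly.

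First I would show that $\SO^0(1,m-1)\subseteq H$. At the Lie-algebra level, $\h=\mathrm{Lie}(H)\subseteq\so(1,m-1)$ either acts irreducibly on $\R^{1,m-1}$ --- in which case $\h=\so(1,m-1)$ by the classification of irreducible subalgebras of $\so(1,m-1)$ (Di Scala--Olmos; compare also B\'erard-Bergery--Ikemakhen on the weakly irreducible case) --- or it preserves a proper nonzero subspace $V$, and if $V$ is degenerate then the totally isotropic subspace $V\cap V^\perp$, being at most a line in Lorentzian signature, is an $\h$-invariant null line. Since $H$ is irreducible, no such $\h$-invariant (equivalently $H^0$-invariant) subspace is $H$-invariant, so the finite component group $H/H^0$ moves it nontrivially; sorting through the possible $H/H^0$-orbits of invariant null lines and of invariant nondegenerate subspaces --- and using that a finite subgroup $F\subseteq\O(1,m-1)$ cannot act irreducibly, because averaging a positive definite form over $F$ gives an $F$-invariant positive definite form whose eigenspace decomposition relative to $g$ splits $\R^{1,m-1}$ into proper $F$-invariant subspaces --- one excludes all of these for $m\ge3$. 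Hence $\SO^0(1,m-1)\subseteq H$.

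Next I would use invariance of the curvature. Because isometries preserve the Levi-Civita connection and therefore its curvature, the Riemann tensor $R_o$ at $o$ is invariant under the isotropy representation, in particular under $\SO^0(1,m-1)$. Now $R_o$ lies in the space $\mathcal{K}$ of algebraic curvature tensors on $\R^{1,m-1}$ (those with the pair symmetry and the first Bianchi identity), and for $m\ge3$ the Ricci decomposition exhibits $\mathcal{K}$ as a direct sum of $\SO^0(1,m-1)$-irreducible subspaces --- the scalar line, the trace-free Ricci part, and, for $m\ge4$, the Weyl part --- in which the trivial representation occurs exactly once and is spanned by the model tensor $R_0$ with $R_0(X,Y)Z=\<Y,Z\>X-\<X,Z\>Y$. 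Hence $R_o=\kappa(o)\,R_0$ for a scalar, and transporting this equation around by elements of $G$ gives $R=\kappa\,R_0$ at every point, so $(M,g)$ has pointwise constant sectional curvature.

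Finally, contracting $R=\kappa R_0$ yields $\Ric=(m-1)\kappa\,g$, whence $\d\kappa=0$ by the contracted second Bianchi identity (Schur's lemma), which needs $m\ge3$; as $M$ is connected, $\kappa$ is constant and $(M,g)$ has constant sectional curvature. (In the genuinely homogeneous setting one could instead observe directly that $\kappa$ is the ratio of the two $G$-invariant tensors $R$ and $R_0$, hence $G$-invariant, hence constant by transitivity; invoking Schur is what keeps the conclusion valid for merely locally homogeneous spaces.) I expect the first step to be the main obstacle: the curvature representation theory and the Schur argument are classical, whereas showing that an a priori disconnected irreducible subgroup of $\O(1,m-1)$ must contain $\SO^0(1,m-1)$ rests on the structure theory of weakly irreducible Lorentzian subalgebras together with a somewhat delicate analysis of the component group.
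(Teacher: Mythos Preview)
Your argument follows the algebraic route that the paper mentions as an alternative; the paper itself sketches Zeghib's original geometric proof in Section~\ref{zeghibsec}. Zeghib does not classify irreducible subgroups at all. Instead he shows (Proposition~\ref{zeghibprop}) that whenever the isotropy representation has non-precompact image---automatic for an irreducible $H\subseteq\O(1,m-1)$---a limit of the graphs $\mathrm{graph}(\d h_n|_o)$ in the Grassmannian of $m$-planes in $T_oM\oplus T_oM$ produces a totally geodesic null hypersurface through~$o$. The span $W\subseteq T_oM$ of null vectors $v$ with $\exp_o(v^\perp)$ totally geodesic is then a nonzero $H$-invariant subspace, hence all of $T_oM$ by irreducibility; having totally geodesic null hypersurfaces orthogonal to every null direction forces constant sectional curvature. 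Zeghib's argument bypasses the subgroup classification entirely and yields geometric output (totally geodesic hypersurfaces) that the paper then reuses in the indecomposable setting (Proposition~\ref{zeghibindecprop}); your approach, by contrast, makes Steps~2 and~3 routine once Step~1 is in hand.

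Your Step~1 has a real gap: you assert that $H/H^0$ is finite without justification, and your averaging argument only excludes \emph{finite} irreducible subgroups. In fact the bald claim that every irreducible subgroup of $\O(1,m-1)$ contains $\SO^0(1,m-1)$ is false---for $m\ge 3$ a lattice $\Gamma\subset\SO^0(1,m-1)$ is Zariski dense by Borel density, hence acts irreducibly on $\R^{1,m-1}$, yet is discrete. What the references cited in the paper actually supply is the connected case ($\h$ irreducible $\Rightarrow\h=\so(1,m-1)$); for the application you only need $R_o$ to be $\so(1,m-1)$-invariant. A clean repair is to pass to the Zariski closure $\bar H$ of $H$ in $\O(1,m-1)$: it is algebraic (so $\bar H/\bar H^0$ is finite), it has the same invariant subspaces as $H$ (so is still irreducible), and it still fixes $R_o$ since tensor invariance is a Zariski-closed condition. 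Even then, passing from irreducibility of $\bar H$ to irreducibility of $\bar H^0$ requires the sort of case analysis you allude to, now on a genuinely finite quotient. Steps~2 and~3 are correct as written.
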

The result of the theorem can also be deduced from the algebraic fact that a subgroup of $\O(1,m-1)$ that acts irreducibly on $\R^{1,m-1}$ contains the connected component of $\O(1,m-1)$, see \cite{benoist-harpe03,olmos-discala01,att05}. Zeghib however has provided a  beautiful geometric proof that is based on the fundamental geometric principle that non-compactness of the isotropy group leads to totally geodesic hypersurfaces,  \cite{zeghib04}, see also \cite{AroucheDeffafZeghib07} and our Section~\ref{zeghibsec}. Contemplating this rigidity, one may relax the condition of irreducibility to the following condition that is more natural in the indefinite context. We say that a subgroup $H\subseteq \O(1,m-1)$ (or a subalgebra $\h\subseteq \so(1,m-1)$) is {\em indecomposable}\footnote{By some authors  this property is called {\em weakly irreducible}, e.g.~in  \cite{olmos-discala01,ErnstGalaev22,ErnstGalaev23}.} if  there is no non-degenerate subspace of $\R^{1,m-1}$ that is invariant under $H$ (or $\h$). The name {\em indecomposable} and the importance of the concept comes from its equivalence, when $m>2$, to the  property that $\R^{1,m-1}$ does not decompose  into a non-trivial sum of subspaces that are invariant.
By the above comments, a proper indecomposable subgroup of the connected component of $\O(1,m-1)$ must admit an invariant null line in $\R^{1,m-1}$. Here, and throughout, by {\em null} we mean light-like, or isotropic.

To our knowledge, all  examples of reductive Lorentzian homogeneous spaces of dimension $m\ge 4$ with indecomposable, non-irreducible isotropy are  homogeneous {\em plane waves}, a special class of Lorentzian manifolds. A  {\em plane wave} is a Lorentzian manifold that 
\begin{enumerate}
\item admits a parallel null vector field $\xi$,   
\item  its curvature tensor  satisfies  $R(X,Y)=0$ for all $X,Y\in \xi^\perp$, and
\item
 $\nabla_XR=0$ for all $X\in \xi^\perp$.   
\end{enumerate}A Lorentzian manifold with only (1) and (2) is called a {\em pp-wave} (`plane fronted with parallel rays'). 
In dimension $3$ every Lorentzian manifold with parallel null vector field is a pp-wave. 
It has been shown in \cite{GlobkeLeistner16} that under a mild non-degeneracy condition a homogeneous pp-wave of dimension $m\ge 4$  is a plane wave. Moreover, homogeneous plane waves are reductive and have been classified in \cite{blau-oloughlin03}.  They admit an indecomposable isotropy group  $\R^n\subset \SO(1,n+1)$, where $m=n+2$ is the dimension of $M$,  and hence have the same isotropy as Cahen--Wallach spaces.  Cahen and Wallach have shown in  \cite{cahen-wallach70} that a Lorentzian symmetric space with indecomposable holonomy  either has constant sectional curvature or is universally covered by a Cahen--Wallach space, which is a locally symmetric plane wave (see Section~\ref{homplanesec} for their definition).
This scarcity of examples and the analogy with symmetric spaces leads us to conjecture the following.
\begin{conjecture}
\label{conj}
A reductive Lorentzian   homogeneous space $G/H$ of dimension $m\ge 4$  with indecomposable, non-irreducible isotropy $H$ is a plane wave.
\end{conjecture}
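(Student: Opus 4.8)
The plan is to reduce the conjecture to the main theorem of the paper, namely that an Ambrose--Singer connection with indecomposable, non-irreducible holonomy forces local isometry to a plane wave. The reductive decomposition $\g = \h \oplus \m$ supplies the canonical connection $\tnab$, which is metric and has $\tnab$-parallel torsion and curvature, hence is an Ambrose--Singer connection; moreover every $G$-invariant tensor field on $M$ is $\tnab$-parallel, and the holonomy of $\tnab$ is contained in the isotropy, $\Hol(\tnab) \subseteq H$. Thus it suffices to show that $\Hol(\tnab)$ is indecomposable and non-irreducible, for then the main theorem applies verbatim.

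First I would extract the invariant null structure. The isotropy representation realises $H$ as a proper, indecomposable subgroup of $\O(1,m-1)$; passing to the identity component and using the algebraic fact recalled above, it preserves a null line $\ell_o \subset T_oM$, which is $H$-invariant by its uniqueness. Note that $H$ cannot be compact, since a compact subgroup of $\O(1,m-1)$ fixes a timelike direction and hence a non-degenerate invariant subspace, contradicting indecomposability. Propagating $\ell_o$ by the transitive $G$-action yields a $G$-invariant, and therefore $\tnab$-parallel, null line field $\ell$ on $M$. In particular $\Hol(\tnab)$ preserves $\ell_o$ and so is non-irreducible; it remains to prove it indecomposable.

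The crux, and what I expect to be the main obstacle, is that $\Hol(\tnab)$ may be strictly smaller than $H$: by Ambrose--Singer its algebra is generated by the curvature values $\tR(X,Y)$ for $X,Y \in \m$, which correspond to the $\h$-parts $[X,Y]_\h$ of brackets and need not span all of $\h$. Consequently a non-degenerate $\Hol(\tnab)$-invariant subspace need not be $H$-invariant, and indecomposability of $H$ does not descend automatically. To close this gap I would suppose $\R^{1,m-1}$ splits $\Hol(\tnab)$-invariantly and non-degenerately as $V_1 \perp V_2$; since the torsion and curvature of $\tnab$ are $\tnab$-parallel, this splitting integrates to a pair of $\tnab$-parallel, $G$-invariant, mutually orthogonal distributions on $M$, and the homogeneous structure respects them. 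It then has to be shown that the residual action of $H$, which preserves all the induced geometric data on the factors, is incompatible with $H$ being indecomposable. Here Zeghib's non-compactness principle (Section~\ref{zeghibsec}, \cite{zeghib04}) should enter: the non-compact part of $H$ produces totally geodesic degenerate hypersurfaces tangent to $\ell^\perp$, and a non-degenerate invariant factor $V_i$ meeting $\ell^\perp$ transversally would be forced to lie normal to these hypersurfaces, which I expect to contradict either their total geodesy or the indecomposability of the full isotropy.

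Once $\Hol(\tnab)$ is known to be indecomposable and non-irreducible, the main theorem yields that $(M,g)$ is locally isometric to a plane wave, proving the conjecture. The delicate feature throughout is the possibly proper inclusion $\Hol(\tnab) \subseteq H$: it is exactly this gap between the holonomy of the canonical connection and the full isotropy that separates the present conjecture from the Ambrose--Singer formulation, and overcoming it is where the argument must do its real work.
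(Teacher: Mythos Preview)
The statement is a \emph{conjecture} in the paper and is \emph{not} proved there. The authors explicitly isolate the obstacle you name --- that the holonomy $\widetilde H=\Hol(\tnab)$ of the canonical connection may be properly contained in $H$, so that indecomposability of $H$ need not pass to $\widetilde H$ --- as the reason Theorem~\ref{maintheo} does not settle Conjecture~\ref{conj} (see the paragraph immediately after Theorem~\ref{maintheo} in the introduction).

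Your diagnosis of the gap is therefore accurate, but your proposed remedy does not close it. In the third paragraph you assert that a $\Hol(\tnab)$-invariant non-degenerate splitting $V_1\perp V_2$ ``integrates to a pair of $\tnab$-parallel, $G$-invariant'' distributions. Parallelism for $\tnab$ is fine, but $G$-invariance does not follow: a $\tnab$-parallel tensor is a priori invariant only under the transvection group $\widetilde G$ with isotropy $\widetilde H$, and since $\widetilde H\subsetneq H$ is exactly the situation you are trying to rule out, you cannot assume the full $H$ preserves the factors. Without $H$-invariance of $V_1$ and $V_2$ there is nothing to contradict the indecomposability of $H$, and the phrase ``the residual action of $H$, which preserves all the induced geometric data on the factors'' is circular. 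The subsequent appeal to Zeghib's principle is only a hope (``I expect to contradict''); the paper itself pushes Zeghib's argument as far as it currently goes in Section~\ref{zeghibsec}, arriving at Proposition~\ref{zeghibindecprop} (a $G$-invariant auto-parallel null hyperplane distribution) and then stating explicitly that auto-parallel is strictly weaker than parallel and that the authors ``do not know how to proceed from here directly towards Conjecture~\ref{conj}.'' Your sketch does not advance beyond that point.
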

This conjecture appears to be very optimistic when considering the following equivalent statement: the isometry group of a Lorentzian homogeneous space that is not  a plane wave does not have any transitively acting subgroups with indecomposable, non-irreducible isotropy. Nevertheless, several results towards this conjecture have been proven for   special situations in  \cite{Meessen06,Montesinos-Amil01, ErnstGalaev22, ErnstGalaev23}. 
The approach in these papers is based on the existence of a {\em homogeneous structure}, or equivalently, an  {\em Ambrose--Singer connection}, i.e.~a  connection $\tnab$ that is compatible with the metric $g$ and has $\tnab$-parallel torsion and curvature.
The existence of such a connection is equivalent to `local reductive homogeneity',  which,  under some global assumptions, is equivalent to reductive homogeneity; see \cite{AmbroseSinger58,tricerri-vanhecke83,GadeaOubina92}, and our Section~\ref{review}. An example of an Ambrose--Singer connection is the canonical connection of a reductive homogeneous space $G/H$. The holonomy of this connection is contained in the isotropy representation of $H$.
In the current paper we will make the following substantial step towards Conjecture~\ref{conj}.
\begin{theorem}\label{maintheo}
Let $(M,g)$ be a Lorentzian manifold of dimension $m\ge 4$  that admits an Ambrose--Singer connection with indecomposable, non-irreducible holonomy. Then the universal cover of $(M,g)$ is a locally homogeneous plane-wave.
\end{theorem}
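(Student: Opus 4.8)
The plan is to work on the universal cover $\tM$, where the Ambrose--Singer connection $\tnab$ lifts and where we may assume (by the standard $1$-connectedness arguments referenced in Section~\ref{review}) that the data is that of a genuine reductive homogeneous space $G/H$ with canonical connection $\tnab$ and holonomy $\mathfrak{hol}\subseteq \h\subseteq \so(1,m-1)$ acting indecomposably and non-irreducibly on $\R^{1,m-1}$. By the algebraic remarks in the introduction, indecomposability plus non-irreducibility forces $\mathfrak{hol}$ to preserve a null line $\R\xi$; since $\tnab$ has $\tnab$-parallel torsion and curvature, this null line is $\tnab$-parallel, and the first task is to upgrade this to a Levi-Civita-parallel null vector field. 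This is where I expect the main obstacle to lie: the canonical connection differs from $\nabla$ by the homogeneous structure tensor $S=\nabla-\tnab$, and one must show that the $\tnab$-parallel null distribution $\xi^\perp$, and ideally $\xi$ itself up to scale, is also $\nabla$-parallel. I would attack this by analysing the possible algebraic types of $S$ relative to the filtration $\R\xi\subset \xi^\perp\subset \R^{1,m-1}$, using metric compatibility of $S$ (so $S_X\in\so(1,m-1)$ for each $X$) together with the parallelism of the curvature and torsion, and exploiting the structure theory of stabilisers in $\so(1,m-1)$ of a null line (the $(\mathfrak{sim}$-type$)$ parabolic, with its Levi part $\so(n)\oplus\R$ and nilradical $\R^n$).

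Once a parallel null vector field $\xi$ is established, conditions (1)--(2) of the definition of pp-wave amount to showing $R(X,Y)=0$ for $X,Y\in\xi^\perp$. Here I would combine two inputs: first, the reductive homogeneous structure gives strong constraints on $R$ via the Ambrose--Singer equations relating $\tnab R$, $S$, and $R$ (all curvature being $\tnab$-parallel, so $R$ is a fixed $\mathfrak{hol}$-invariant algebraic curvature tensor); second, and this is the geometric heart, I would invoke Zeghib's principle as in Theorem~\ref{zeghibtheo} and Section~\ref{zeghibsec}: non-compactness of the isotropy. Because the holonomy is non-irreducible and preserves $\R\xi$ but is indecomposable, the isotropy cannot be compact (a compact subgroup of $\O(1,m-1)$ fixing a null line would fix a non-degenerate complement), so the isotropy group of $G/H$ is non-compact, which by Zeghib's mechanism produces a codimension-one totally geodesic (indeed, $\xi^\perp$-integral) foliation. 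Feeding the totally geodesic hypersurfaces back into the Gauss equation, together with the parallel null vector field, should collapse the curvature on $\xi^\perp$ to zero, giving the pp-wave property.

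The final step is to promote pp-wave to plane wave, i.e.\ to obtain condition (3), $\nabla_X R=0$ for $X\in\xi^\perp$. For this I would use the homogeneity once more: since $(M,g)$ is (locally) homogeneous with a parallel null vector field and vanishing curvature on $\xi^\perp$, the result of \cite{GlobkeLeistner16} quoted in the introduction applies — under the mild non-degeneracy condition a homogeneous pp-wave of dimension $m\ge 4$ is a plane wave — so it remains only to check that non-degeneracy condition in our setting, which should follow from indecomposability of the holonomy (degeneracy of the relevant data would produce a further invariant non-degenerate subspace or reduce to a lower-dimensional decomposable situation). Assembling these pieces: pass to the universal cover and to $G/H$; extract the $\tnab$-parallel null line; upgrade to a $\nabla$-parallel null vector field by the $S$-analysis; use non-compact isotropy à la Zeghib to get totally geodesic hypersurfaces and hence $R|_{\xi^\perp}=0$; and finally apply \cite{GlobkeLeistner16} to conclude that the universal cover is a locally homogeneous plane wave. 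The $S$-analysis in the second step, disentangling how a metric homogeneous structure can sit over the parabolic filtration while respecting parallel curvature and torsion, is the step I expect to be genuinely delicate and to require the bulk of the work.
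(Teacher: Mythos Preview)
Your overall architecture is right up through the third step: pass to the universal cover, extract the $\tnab$-parallel null line from indecomposability, and upgrade it to a $\nabla$-parallel null vector field by analysing the homogeneous structure $S$ relative to the parabolic filtration. The paper does exactly this (Theorems~\ref{VFtheo} and~\ref{vftheo}), and you are correct that this ``$S$-analysis'' is the technical heart. However, your fourth and fifth steps both contain genuine gaps.

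\textbf{The pp-wave step via Zeghib does not go through.} You propose to use non-compactness of the isotropy \`a la Zeghib to obtain totally geodesic null hypersurfaces, and then ``feed the totally geodesic hypersurfaces back into the Gauss equation'' to force $R(X,Y)=0$ for $X,Y\in\xi^\perp$. But the Gauss equation for a totally geodesic hypersurface only says that the intrinsic curvature agrees with the restriction of the ambient curvature; it does not make either one vanish. Indeed, the paper carries out Zeghib's argument in exactly this setting (Proposition~\ref{zeghibindecprop}) and obtains only that $\xi^\perp$ is \emph{auto-parallel}, then explicitly remarks that this is strictly weaker than parallel and that the authors ``do not know how to proceed from here directly''. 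So this route is known to stall.

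\textbf{The plane-wave step via \cite{GlobkeLeistner16} is not justified.} That result requires the \emph{Levi-Civita} holonomy to be indecomposable and the curvature operator to have rank greater than one. You only have indecomposability of the $\tnab$-holonomy, and the paper explicitly warns (in the paragraph comparing Conjecture~\ref{conj} with Theorem~\ref{maintheo}) that indecomposability can be lost when passing between the two. The rank hypothesis is likewise unaddressed.

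The paper's actual route for both steps is purely algebraic and stays inside the $S$-analysis you already flagged as delicate. Because $S$, $T$, and $\tR$ are $\tnab$-parallel, they are annihilated by the holonomy algebra $\h$, so at each point they lie in the trivial $\h$-submodules of $V^*\otimes\g$, $\Lambda^2V^*\otimes V$, and $\Lambda^2V^*\otimes\h$ respectively. The paper computes these submodules (Theorem~\ref{algtheo}, Corollary~\ref{Talgcor}, Corollary~\ref{curvtheo2}) and finds in particular that $S(X)\in\g_-$ for all $X\in\xi^\perp$ and $\tR(U,V)\in\g_-$ for all $U,V$. The pp-wave condition for $R$ then drops out of the identity $R=\tR+S\circ T+[S,S]$ from Lemma~\ref{curvlem}, and the plane-wave condition from $\nabla_XR=S(X)\cdot R$ with $S(X)\in\g_-$ acting on a curvature tensor already of pp-wave type. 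Neither Zeghib nor \cite{GlobkeLeistner16} is used.
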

We show by  example that the dimension restriction $m\ge 4$ is sharp, in this theorem as well as in the conjecture: in Example~\ref{counterex} we construct  
a left-invariant metric $g$ 
on the universal cover of $\SL(2,\R)$ that is not a pp-wave metric, together with an Ambrose--Singer connection for $g$ with indecomposable, non-irreducible holonomy algebra $\R\subset \SO(1,2)$, which can be represented as a homogeneous space $\R\ltimes \widetilde{\SL}(2,\R)/\R$, where $\R\subset \SO(1,2)$ is indecomposable.

We should elaborate on the difference between Conjecture~\ref{conj} and Theorem~\ref{maintheo}. 
On the one hand, under some global requirements, the assumptions of the theorem  imply the assumptions in the conjecture, as the existence of an Ambrose--Singer connection with holonomy $H$ allows to realise  $(M,g)$ as a homogeneous space $G/H$, see our Section~\ref{review} for details and references.  On the other hand, if $M=G/H$ is a reductive homogeneous space with  $H\subseteq \O(1,m-1)$, then the canonical connection of $G/H$ is an Ambrose--Singer connection. If $\widetilde{H}$ denotes its holonomy, then $\widetilde{H}\subseteq H$ and $M=\widetilde{G}/\widetilde{H}$ for a group $\widetilde{G}\subseteq G$, the {\em transvection group}, again see our Section~\ref{review}. In general,  indecomposibility may be lost when passing to the subgroup $\widetilde{H}\subseteq H$, and hence it is not clear if the assumptions in Conjecture~\ref{conj}  imply the assumptions in Theorem~\ref{maintheo}.  Nevertheless, Theorem~\ref{maintheo} is a crucial step towards the validity of the conjecture.

Our  Theorem~\ref{maintheo} is also substantial generalisation of results in 
 \cite{Meessen06,Montesinos-Amil01, ErnstGalaev22, ErnstGalaev23}, which we will briefly review  here. Since an Ambrose--Singer connection preserves the metric, it is completely determined by its torsion $T$, which by definition is parallel, so its algebraic type does not change. The torsion 
 can be identified with an element in $\Lambda^2(\R^m)^* \otimes (\R^m)^*$, which decomposes into three irreducible modules of $\so(1,m-1)$. One is given by the $3$-forms, whose elements correspond to  {\em totally skew torsion}. An invariant complement to the $3$-forms is given by those $T_{abc}\in \Lambda^2(\R^m)^* \otimes (\R^m)^*$ that satisfy $T_{[abc]}=0$, where the brackets indicate the skew-symmetrisation over enclosed indices. This module splits further into two irreducible modules, the trace-free tensors and those that are pure trace, and the corresponding torsion is called {\em twistorial} and {\em vectorial}, respectively.  Vectorial torsion is called {\em degenerate} if the $1$-form $T_{ab}{}^b$ is null.
Under  assumptions on the algebraic type of the torsion $T$ of an Lorentzian Ambrose--Singer connection, the following results have been shown:
\begin{enumerate}
\item If $T$ is  vectorial, then $(M,g)$ is a singular homogeneous plane wave in the case when  $T$ is degenerate \cite{Montesinos-Amil01},
or  has constant sectional curvature otherwise
 \cite{GadeaOubina97}.
\item If $T$ is twistor-free (i.e.~no twistorial part) and has  degenerate, nonzero vectorial part, then $(M,g)$ is a singular homogeneous plane wave  \cite{Meessen06}.
\item If $T$ is totally skew and the holonomy of  $\tnab$ is indecomposable, then $(M,g)$ is a regular homogeneous plane wave  \cite{ErnstGalaev22}.
\end{enumerate}
Note that  twistorial torsion lies in the   largest of the irreducible torsion modules, but has not yet been dealt with in the literature and is considered to be the most difficult case. Since our result in Theorem~\ref{maintheo} does not require {\em any} assumptions on the algebraic type of the torsion, it  also covers this case.
 
 The main ingredients for the proof of Theorem~\ref{maintheo} are algebraic and provided in Section~\ref{algsec}. They consist of a description of submodules of the torsion module and of the module of algebraic curvature tensors (with torsion) on which an indecomposable subalgebra $\h\subsetneq\so(1,m-1)$ acts trivially.  This imposes very strong conditions on the parallel torsion and curvature of an Ambrose--Singer connection that allow us to prove Theorem~\ref{maintheo}. En route to Theorem~\ref{maintheo}, in Section~\ref{LASCsec} we prove two key results about Lorentzian connections with torsion and $2$-symmetric connections. By a {\em Lorentzian connection} we mean a connection on a Lorentzian manifold $(M,g)$ such that $\nabla g=0$. If such a connection has indecomposable, non-irreducible holonomy, it admits a parallel subbundle of null lines in $TM$ and hence locally a recurrent null vector field. We can summarise our results about Lorentzian connections as follows.
\begin{theorem}
Let $(M,g)$ be a simply connected Lorentzian manifold of dimension $\ge 4$ and $\tnab$ be a Lorentzian connection with indecomposable, non-irreducible holonomy. 
\begin{enumerate}
\item If $\tnab$ is  2-symmetric, i.e.~if its curvature $\tR$ satisfies $\tnab\tnab \tR=0$, then it admits a parallel null vector field (see Theorem~\ref{VFtheo}).
\item If $\tnab$ has parallel torsion, then it admits a parallel null vector field if and only if the Levi-Civita connection of $(M,g)$ admits a parallel null vector field, and both vector fields span the same null line bundle. 

Moreover, if $\tnab$ does not admit a parallel null vector field, then it torsion-free and hence  equal to the Levi-Civita connection (see Theorem~\ref{vftheo}).
\end{enumerate}
\end{theorem}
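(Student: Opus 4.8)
The plan is to prove both parts via a single reduction, controlling the obstruction to turning a recurrent null vector field into a parallel one. Because $\tnab$ has indecomposable, non-irreducible holonomy and $M$ is simply connected, $\hol(\tnab)$ preserves a unique null line in $\R^m$, so $(M,g)$ carries a $\tnab$-parallel null line bundle $\L\subset TM$ and a null vector field $\xi$ spanning it with $\tnab\xi=\theta\otimes\xi$ for a $1$-form $\theta$. Choosing a complementary null direction and the Riemannian screen $\E$, the holonomy lies in the parabolic subalgebra $\stab(\R\xi)=(\R\oplus\so(\E))\lx\E$, and the scaling projection $\mathrm{pr}_\R\colon\stab(\R\xi)\to\R$ (the quotient by the ideal $\so(\E)\lx\E$) satisfies $\tR(X,Y)\xi=(\mathrm{pr}_\R\circ\tR)(X,Y)\,\xi=\d\theta(X,Y)\,\xi$. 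One then has the equivalences: $\tnab$ admits a parallel null vector field $\iff$ $\mathrm{pr}_\R\circ\hol(\tnab)=0$ $\iff$ $\hol(\tnab)$ fixes $\xi$ $\iff$ $\omega:=\d\theta=0$, the last step using simple connectedness ($\theta$ closed $\Rightarrow$ $\theta=\d f$ $\Rightarrow$ $\e^{-f}\xi$ is parallel). So both statements reduce to proving $\mathrm{pr}_\R\circ\hol(\tnab)=0$ under the respective hypothesis.

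For part (1), assume $\tnab\tnab\tR=0$ and suppose, for contradiction, $\mathrm{pr}_\R\circ\hol(\tnab)\neq0$. The holonomy bundle $\hol(\tnab)\subset\so(TM)$ is $\tnab$-parallel and contains $\tR$, hence also contains $\tnab\tR$; and since $\tnab(\tnab\tR)=0$, the tensor $\tnab\tR$ is $\tnab$-parallel, hence $\hol(\tnab)$-invariant, with curvature values in $\hol(\tnab)\subseteq\stab(\R\xi)$. Composing those values with $\mathrm{pr}_\R$ produces a $\hol(\tnab)$-invariant tensor $\tnab\omega\in T^*M\otimes\Lambda^2T^*M$, since $(\tnab_Z\tR)(X,Y)\xi=(\tnab_Z\omega)(X,Y)\,\xi$. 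The key point is that an indecomposable $\h\subsetneq\so(1,m-1)$ with $\mathrm{pr}_\R\circ\h\neq0$ admits no nonzero invariant element in $(\R^m)^*$, in $\Lambda^2(\R^m)^*$, or in $(\R^m)^*\otimes\Lambda^2(\R^m)^*$: decomposing the last module into its vectorial, totally skew and twistorial parts, the vectorial and $3$-form cases are elementary (an invariant vector, resp.\ invariant $3$-form, contradicts indecomposability together with a nontrivial scaling action), and the twistorial case is exactly the content of the classification of invariant torsion submodules in Section~\ref{algsec}. Hence $\tnab\omega=0$, so $\omega$ is $\tnab$-parallel, hence a $\hol(\tnab)$-invariant $2$-form, which by the same vanishing is $0$. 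By the Ambrose--Singer holonomy theorem, $\mathrm{pr}_\R\circ\hol(\tnab)$ is spanned by the $\mathrm{pr}_\R$-images of parallel transports of values of $\tR$ and $\tnab\tR$, i.e.\ of the values of $\omega$ and $\tnab\omega$, and is therefore $0$ --- a contradiction. So $\mathrm{pr}_\R\circ\hol(\tnab)=0$ and $\xi$ can be rescaled to a $\tnab$-parallel null vector field.

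For part (2), assume $\tnab T=0$. As $\tnab g=0$, the contorsion tensor $K:=\tnab-\nabla$ is expressed algebraically through $T$ and $g$ by the usual Koszul-type formula, so $\tnab T=0$ gives $\tnab K=0$; thus $K$ is a $\hol(\tnab)$-invariant element of $T^*M\otimes\so(TM)$, which as an $\so(1,m-1)$-module is isomorphic to the torsion module. If $\mathrm{pr}_\R\circ\hol(\tnab)\neq0$, the vanishing of invariants from part (1) forces $K=0$, i.e.\ $\tnab=\nabla$; this gives the final assertion of the theorem, and combined with the reduction it also gives ``$\nabla$ has a parallel null vector field $\Rightarrow$ $\tnab$ has one''. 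If $\mathrm{pr}_\R\circ\hol(\tnab)=0$, then $\tnab\xi=0$, $\xi^\flat$ is $\tnab$-parallel, and $K$ is an invariant torsion tensor for an indecomposable $\h\subseteq\so(\E)\lx\E$. The explicit description in Section~\ref{algsec}, together with the skew-symmetry of each $K_X$, shows that $K$ preserves the line $\L$ and that $\xi^\flat\circ T=0$; hence $\nabla_X\xi=-K_X\xi=\psi(X)\,\xi$ for a $\tnab$-parallel $1$-form $\psi$ proportional to $\xi^\flat$, and $\d\psi(X,Y)=-\psi(T(X,Y))=0$ because $\psi\in\R\xi^\flat$ and $\xi^\flat\circ T=0$. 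By simple connectedness $\psi$ is exact, so $\xi$ rescales to a $\nabla$-parallel null vector field spanning $\L$; this proves both the remaining implication and the assertion that the two parallel null vector fields span the same line bundle.

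Finally, a word on difficulty. The analytic ingredients --- the reduction through the recurrent vector field, the holonomy principle, the Ambrose--Singer theorem, and the passage between $\tnab$ and $\nabla$ via the parallel contorsion tensor --- are all routine. The substance of the proof lies entirely in the algebraic classification of $\h$-invariant elements of the torsion and curvature modules for an indecomposable $\h\subsetneq\so(1,m-1)$, and I expect the twistorial module (the largest and, as noted in the introduction, the most recalcitrant one) to be the genuine obstacle; this is precisely what Section~\ref{algsec} is designed to supply. A secondary, more delicate point is extracting from that classification the exact statements needed in the $\mathrm{pr}_\R\circ\hol(\tnab)=0$ case of (2): that an invariant contorsion tensor preserves $\L$ and annihilates $\xi$ modulo $\L$, and that $\xi^\flat\circ T=0$.
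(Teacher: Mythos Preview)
Your proof is correct and follows essentially the same route as the paper's proofs of Theorems~\ref{VFtheo} and~\ref{vftheo}: the reduction via the recurrent null vector field (Lemma~\ref{VFlem1}), the parallelism of $\tnab\d\theta$ from $2$-symmetry (Lemma~\ref{VFlem3}), the vanishing of invariants in $V^*\otimes\so(V)$ when $\mathrm{pr}_\R(\h)\neq0$ (Theorem~\ref{algtheo}) together with the triviality of the centraliser (Proposition~\ref{centprop}), and for part~(2) the explicit form of the invariant contorsion (equation~(\ref{Sexplicit})) and the fact $\xi^\flat\circ T=0$ (Corollary~\ref{Talgcor}). Two minor points: your detour through the vectorial/skew/twistorial decomposition is unnecessary since Theorem~\ref{algtheo} already handles all of $V^*\otimes\so(V)\simeq V^*\otimes\Lambda^2V^*$ at once, and the final Ambrose--Singer sentence in part~(1) is redundant (and misstates the theorem by including $\tnab\tR$) --- once you have $\omega=0$ your own initial reduction already delivers the contradiction.
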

The first result also holds in dimension $3$ for locally symmetric connections, i.e.~with $\tnab\tR=0$. These results generalise the results 
some results in \cite{senovilla08} and \cite{AlekseevskyGalaev11} for second order symmetric spaces, and 
in \cite{ErnstGalaev22,ErnstGalaev23} for Lorentzian connections with parallel totally {\em skew} torsion. We believe that the latter will be of interest in the study of Lorentzian connection with torsion in supergravity and string theory.

\subsubsection*{Acknowledgements} We would like to thank Michael Eastwood for helpful discussions and invaluable inspiration. TL also  thanks Ian Anderson for some help with Maple.
\section{Preliminaries}
\subsection{Connections} 
If $\nabla$ is an affine connection on a smooth manifold $M$, i.e.~a connection on $TM$, we define its torsion tensor and curvature tensor as  
\[T^\nabla(X,Y) =\nabla_XY-\nabla_YX-[X,Y],\qquad R^\nabla(X,Y)=[\nabla_X,\nabla_Y]-\nabla_{[X,Y]}.\]
We say that a connection is {\em locally symmetric} if $R^\nabla $ is parallel, i.e.~$\nabla R^\nabla=0$. If $g$ is a semi-Riemannian metric, we say that $\nabla$  {\em preserves $g$} if $\nabla g=0$. If two connections $\nabla$ and $\tnab$ preserve the same metric $g$, then $S$ defined by \begin{equation}\label{Sdef}
S(X,Y)=\nabla_XY-\tnab_XY
\end{equation}
 is a section of $T^*M\otimes \so(TM,g)$, where $\so(TM,g):=\{A\in \End(TM)\mid A\cdot g=0\}$ in which the $\cdot$ denotes the natural action of  endomorphisms on tensors. For $S\in T^*M\otimes \so(TM,g)$ and $X\in TM$ we will also write $S(X):=S(X,.)\in \so(TM,g)$ throughout.

\begin{lemma}\label{curvlem}
Let $\nabla$ be a torsion-free connection and $\tnab$ an affine connection with torsion $T$. 
\begin{enumerate}
\item\label{curvlem1} 
 If $S(X,Y)$ is defined by~(\ref{Sdef}), then the curvatures of both connections are related by
\[R(X,Y)=\tR(X,Y)+(\tnab_XS)(Y)-(\tnab_YS)(X)+S(T(X,Y))+[ S(X),S(Y)].\]
\item\label{curvlem2}
If  both connections preserve  a semi-Riemannian metric $g$  and $\tnab $ has parallel torsion, i.e. $\tnab T=0$, then
\[ \tR(X,Y)Z+\tR(Y,Z)X+\tR(Z,X)Y= T(T(X,Y),Z)+ T(T(Y,Z),X)+T(T(Z,X),Y).\]
\item\label{curvlem3} If in addition to (\ref{curvlem2}), the conncetion $\tnab$ is is locally symmetric, then $\tnab R=0$ and $\nabla_XR=S(X)\cdot R$.
\end{enumerate}
\end{lemma}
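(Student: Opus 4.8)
The plan is to treat the three parts in turn, the first being a direct computation and the substance lying in an auxiliary vanishing that drives the other two. For part~(\ref{curvlem1}) I would substitute $\nabla_XY=\tnab_XY+S(X)Y$ into $R(X,Y)Z=\nabla_X\nabla_YZ-\nabla_Y\nabla_XZ-\nabla_{[X,Y]}Z$ and expand via the Leibniz rule for $\tnab$. The second-order terms reassemble into $\tR(X,Y)Z$; the derivatives $\tnab_X\bigl(S(Y)Z\bigr)$ produce the terms $(\tnab_XS)(Y)Z-(\tnab_YS)(X)Z$; the quadratic terms give $[S(X),S(Y)]Z$; and the first-order terms either cancel in pairs or combine into $S\bigl(\tnab_XY-\tnab_YX-[X,Y]\bigr)Z=S(T(X,Y))Z$, since $T$ is the torsion of $\tnab$. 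Collecting the pieces yields the identity, which uses only that $\nabla$ is torsion-free, not the metric.

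The key observation for parts~(\ref{curvlem2}) and~(\ref{curvlem3}) is that, under their hypotheses, in fact $\tnab S=0$. Torsion-freeness of $\nabla$ gives $S(X)Y-S(Y)X=-T(X,Y)$; differentiating this tensor identity with $\tnab$ and using $\tnab T=0$ shows $(\tnab_ZS)(X)Y=(\tnab_ZS)(Y)X$. Also $\so(TM,g)$ is a $\tnab$-parallel subbundle of $\End(TM)$, because $\tnab g=0$, so $(\tnab_ZS)(X)\in\so(TM,g)$; thus the trilinear form $(X,Y,W)\mapsto\langle(\tnab_ZS)(X)Y,W\rangle$ is symmetric in its first two and skew-symmetric in its last two arguments, and therefore vanishes, giving $\tnab S=0$. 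Granting this, I would derive part~(\ref{curvlem2}) from part~(\ref{curvlem1}) by taking the cyclic sum $\cyc_{X,Y,Z}$ of the latter applied to $Z$: the left-hand side vanishes by the first Bianchi identity for the torsion-free $\nabla$, the $\tnab S$ terms drop out, and $\cyc_{X,Y,Z}\bigl(S(T(X,Y))Z+[S(X),S(Y)]Z\bigr)$ collapses, by repeated use of $S(A)B-S(B)A=-T(A,B)$, to $-\cyc_{X,Y,Z}T(T(X,Y),Z)$; equivalently one may just quote the general first Bianchi identity for $\tnab$ together with $\tnab T=0$.

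For part~(\ref{curvlem3}), observe that $\nabla_X\Phi=\tnab_X\Phi+S(X)\cdot\Phi$ for every tensor field $\Phi$, so $\nabla_XR=\tnab_XR+S(X)\cdot R$ and it suffices to show $\tnab R=0$. But combining part~(\ref{curvlem1}) with $\tnab S=0$ reduces the formula there to $R(X,Y)=\tR(X,Y)+S(T(X,Y))+[S(X),S(Y)]$, whose right-hand side is built by $\tnab$-natural operations out of $\tR$, $T$ and $S$, all of which are $\tnab$-parallel---$\tR$ by the local-symmetry hypothesis, $T$ by assumption, $S$ by the observation above. Hence $\tnab R=0$, and then $\nabla_XR=S(X)\cdot R$.

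The only step that is more than bookkeeping is the vanishing $\tnab S=0$, and the delicate point there is that $(\tnab_ZS)(X)$ remains $g$-skew, which is precisely where the hypothesis $\tnab g=0$ enters; everything else is the usual manipulation of the first Bianchi identity and the Leibniz rule.
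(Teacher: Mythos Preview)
Your proof is correct and follows essentially the same structure as the paper's: a direct expansion for part~(\ref{curvlem1}), and then the key observation $\tnab S=0$ driving parts~(\ref{curvlem2}) and~(\ref{curvlem3}). The only notable difference is in how $\tnab S=0$ is obtained---the paper writes out the explicit formula for $g(S(X,Y),Z)$ in terms of $T$ (via the isomorphism $\Lambda^2T^*M\otimes TM\simeq T^*M\otimes\so(TM,g)$) and differentiates, whereas you use the symmetric-in-$(X,Y)$/skew-in-$(Y,W)$ argument on $\langle(\tnab_ZS)(X)Y,W\rangle$; both are standard, and your remark that part~(\ref{curvlem2}) is just the first Bianchi identity for $\tnab$ with $\tnab T=0$ is a clean shortcut the paper does not spell out.
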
 
\begin{proof}
(\ref{curvlem1}) By the definiton of curvature and  with $\nabla$ torsion-free we get
\[R(X,Y)=\tR(X,Y )+[ \tnab_X,S(Y)] -[ \tnab_Y,S(X)] -S([X,Y] + [S(X),S(Y)].\]
Then the statement follows from $[X,Y]=\tnab_XY  -\tnab_YX-T(X,Y)$.

(\ref{curvlem2}) Let $\nabla$ be the Levi--Civita connection of a semi-Riemannian metric $g$ and $\tnab g=0$. Because of the isomorphism $\Lambda^2\otimes TM\simeq T^*M\otimes \so(TM,g)$, the section $S$ of $T^*M\otimes \so(TM,g)$ is completely determined by the torsion $T(X,Y)=-S(X,Y)+S(Y,X)$ as 
\[g(S(X,Y),Z)= -\tfrac{1}{2}\left( g(T(X,Y),Z)+ g(T(Z,Y),X)+g(T(Z,X),Y)\right).
\] 
Hence, if $\tnab T=0$ we also have $\tnab S=0$, so that the formula in (\ref{curvlem1}) simplifies accordingly. Then writing out the cyclic sum and using that $T(X,Y)=-S(X,Y)+S(Y,X)$ yields the formula for Bianchi symmetry.

(\ref{curvlem3}) 
If in addition $\tnab\tR=0$,  the tensor fields of $\Lambda^2T^*M \otimes \End(TM)$ defined by
\[S\circ T(X,Y):=S(T(X,Y)),\quad [S,S](X,Y):=[S(X),S(Y)] \]
are both $\tnab$-parallel. Indeed, with $S$ and $T$ is parallel, 
\[(\tnab_X (S\circ T)) (X,Y)= S((\tnab_X T)(Y,Z))=0,\]
and 
\[(\tnab_X S\circ T)(Y,Z)
=
\left[\tnab_X,[S(Y),S(Z)]\right]
-
\left[[ \tnab_X,S(Y)],S(Z)]\right]
-
\left[S(Y),[ \tnab_X,S(Z)]]\right]
=0,
\]
by the Jacobi identity for endomorphisms. Hence, $\tnab R=\tnab \tR=0$.
\end{proof}

\subsection{Reductive  homogeneous spaces and Ambrose--Singer connections}\label{review}
Let $(M,g)$ be a semi-Riemannian manifold with Levi-Civita connection $\nabla$ and with curvature tensor $R$. A {\em homogeneous structure} for $(M,g)$ is a section $S$ of $T^*M\otimes \so(TM,g)$ such that
\[\nabla_XS=
S(X)\cdot S,\quad \nabla_XR=S(X)\cdot R,\]
for all $X \in TM$. 
The existence of  homogeneous structure $S$ is equivalent to the existence of an {\em Ambrose--Singer connection}, i.e.~a connection $\tnab$  that is locally symmetric, has parallel torsion, and preserves the metric $g$.
Since $\tnab g=0$, the connection $\tnab$ is uniquely determined by its torsion. Hence, the relation between an Ambrose--Singer connection $\tnab $ and a homogeneous structure $S$ is given by~(\ref{Sdef}).
Consequently, the condition $\tnab T=0$ is equivalent to $\tnab S=0$ and 
by Lemma~\ref{curvlem} a connection $\tnab$ is an Ambrose--Singer connection if and only if
\begin{equation}\label{ASdef1}\tnab g=0,\quad \tnab R=0,\quad \tnab S=0,\end{equation}
where $S$ defined in~(\ref{Sdef}) and $R$ is the  curvature tensor of the Levi-Civita connection.

The name suggests that there is a close relation between homogeneous structures and homogeneous spaces, which is indeed the case:
\begin{enumerate}
\item[(A)]
Given a  reductive homogeneous space $M=G/H$, with $G$ a subset of the isometry group of $(M,g)$ and $H$ the isotropy group at a point $o=eH$, there is an Ambrose--Singer connection.
\end{enumerate}
This Ambrose--Singer connection in (A) is the canonical connection of the reductive homogeneous space, see \cite{AmbroseSinger58} or \cite[Section II.11]{ko-no1} for 
details.
%
More importantly, in a specific sense the converse of  statement (A) holds:
 \begin{enumerate}
\item[(B)]
 If a geodesically complete, simply connected semi-Riemannian manifold $(M,g)$ admits a homogeneous structure (or equivalently, an Ambrose--Singer connection), then $(M,g)$ is a reductive homogeneous space.
 \end{enumerate}
  This was first proved by Ambrose and Singer in \cite[Section 3]{AmbroseSinger58} for Riemannian manifolds, see also \cite{tricerri-vanhecke83}. It was later generalised to indefinite metrics in \cite{GadeaOubina92}\footnote{We believe that there is a gap in the proof of \cite[Theorem 1]{GadeaOubina92}. It appears that homogeneous plane waves (see Section~\ref{homplanesec}) provide a counterexample to the equivalence of (b) and (c), ibid: they are reductive but their isotropy is null for the Killing form.
  }
  , see also \cite[Theorem I.40]{Kowalski80} for the more general affine setting, and \cite[Theorem 2.2.1]{CalvarusoCastrillon-Lopez19}.
  
There is a local version of the statement in (B). A semi-Riemannian manifold $(M,g)$ is {\em locally homogeneous} if for each pair  of points  $p$ and $q$ in $M$ there is are neighbourhoods $U$ and $V$ of $p$ and $q$ and an isometry from $U$ to $V$ that maps $p$ to $q$, see for example \cite[Section 7.3]{ambra-gromov91} or \cite[Definition 1.4.6]{CalvarusoCastrillon-Lopez19}. This is for example the case if every point $p\in M$ admits a neighbourhood $U$ of $p$ and Killing vector fields on $U$ that span $T_pM$. In \cite[Section 3.1]{CalvarusoCastrillon-Lopez19} it is shown that reductivity for locally homogeneous semi-Riemannian manifolds makes sense and that
 \begin{enumerate}
\item[(C)] a semi-Riemannian manifold $(M,g)$ admits an Ambrose--Singer connection if and only if $(M,g)$ is reductive locally homogeneous, \cite[Theorems 3.1.14 and 3.1.15]{CalvarusoCastrillon-Lopez19}.
\end{enumerate}
Finally, we recall the relation between the isotropy algebra $\h$ of a homogeneous space and the holonomy algebra of the canonical connection, as it can be found in \cite[Theorems I.25 and I.33]{Kowalski80} for affine homogeneous spaces.
If
 $M=G/H$ is a semi-Riemannian reductive homogeneous space with reductive Lie algebra decomposition $\g=\h\+\m$, then the connected Lie group $\widetilde{G}$ that corresponds to the ideal $\widetilde{\g}:=[\m,\m]+\m=\mathrm{pr}_\h([\m,\m])\+\m$ in $\g$ acts transitively on $M$ and  $\th:=\mathrm{pr}_\h([\m,\m])$ is equal to the holonomy algebra of the canonical connection.  
From this it follows that every homogeneous space $G/H$ has a representation as $\widetilde{G}/\widetilde{H}$,  where $\th$ is the holonomy algebra of the canonical connection of $G/H$. The group $\widetilde{G}$ is  called the transvection group of the homogeneous space $G/H$ and the presentation of $M$ as $\widetilde{G}/\widetilde{H}$ the {\em transvection} or {\em prime presentation}, 
\cite[page 41]{Kowalski80}. 

By  (C), if $\tnab$ is an Ambrose--Singer connection on $(M,g)$, then $(M,g)$ is locally homogeneous. If $T$ and  $\tR$ are the torsion and curvature of $\tnab$, $o\in M$ and $\m:=T_oM$, then by the Ambrose--Singer holonomy theorem in \cite{as}, 
the  holonomy algebra $\th$ at $o\in M$
is  $\th  =\span\{\tR|_o(X,Y)\mid  X,Y\in \m\}$. 
One can define a Lie bracket on $\tg:=\th\+\m$ by extending the Lie bracket of $\th\subseteq \so(\m)$ by
\[[ H,X]:=H(X),\qquad [ X,Y ]:=-\tR|_o (X,Y)-T|_o(X,Y).\]
Then there is a unique simply connected Lie group $\widetilde{G}$ with Lie algebra $\tg$ and the unique connected subgroup $\widetilde{H}$ with Lie algebra $\th$. If $\widetilde{H}$ is closed in $\widetilde{G}$, the homogeneous space $\widetilde{G}/\widetilde{H}$ carries the metric coming from $g|_o$ on $\m$ and  $(M,g)$ is locally isometric to $\widetilde{G}/\widetilde{H}$. The data $(\m, \tR|_o, T|_o)$ is  called an {\em infinitesimal model} of the locally homogeneous space $(M,g)$.  
%
%

In the light of these facts,  Theorem~\ref{maintheo} can also be stated as: {\em A  Lorentzian reductive locally homogeneous space is a plane wave if it admits  an  infinitesimal model that has indecomposable, non-irreducible  isotropy.}

\subsection{Lorentzian homogeneous spaces}

We will not give a comprehensive review about what is known for Lorentzian homogeneous spaces and homogeneous structures --- for this we refer to \cite{CalvarusoCastrillon-Lopez19} and the references therein --- but we will focus on some aspects that are relevant for the context of  Conjecture~\ref{conj} and Theorem~\ref{maintheo} in the introduction.

\subsubsection{Zeghib's approach}
\label{zeghibsec}
Here we will see how far Zeghib's approach gets us towards establishing  Conjecture~\ref{conj}.  The proof of Therorem~\ref{zeghibtheo}  in \cite{zeghib04} is based on the following principle.
\begin{proposition}\label{zeghibprop}
Let $(M,g)$ is a Lorentzian homogeneous space and $o\in M$ and $H$ the isotropy of $o$. If the image under the isotropy representation $H\to \O(T_oM,g|_o)$ is not precompact, then there is a totally geodesic null hypersurface through $o$.
\end{proposition}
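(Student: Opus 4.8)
The plan is to follow Zeghib's original geometric strategy, adapting it to the homogeneous setting. The key point is that a non-precompact subgroup $H\subseteq\O(T_oM,g|_o)$, acting on the light cone through $o$, must have a sequence $(h_n)$ whose differentials degenerate in a controlled way. First I would pass to the connected component and use that the isotropy representation realizes $H$ (or a subgroup whose closure contains the image) inside $\O(1,m-1)$; by the structure of $\O(1,m-1)$, non-precompactness forces a sequence $h_n\to\infty$ such that, after rescaling, $\tfrac{1}{\|h_n\|}h_n$ converges (in $\End(T_oM)$) to a nonzero endomorphism $p$ of rank one whose image is a null line $\ell\subset T_oM$ and whose kernel is the null hyperplane $\ell^\perp$. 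This is the linear-algebra heart of the argument: any divergent sequence in $\O(1,m-1)$ has, up to subsequence, a rank-one projectivized limit, and that limit is automatically ``null on null'' because the limit must be a singular (hence degenerate) quadratic-form-preserving limit. So $\ell$ is a canonically attached null line in $T_oM$, independent of the choice of sequence up to scaling.

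Next I would globalize $\ell$. Since $H$ fixes $o$ and every $h\in H$ is the differential at $o$ of an isometry $\phi_h$ of $M$, the line $\ell$ is $H$-invariant (because the rescaled limit $p$ commutes up to scalars with every element of $H$: conjugating the sequence $h_n$ by a fixed $h$ produces another divergent sequence with the same projectivized limit line). Using transitivity of $G$, I transport $\ell$ around: for $x=\gamma\cdot o$ put $\ell_x:=\d\gamma|_o(\ell)$; this is well-defined precisely because $\ell$ is $H$-invariant, so it does not depend on the choice of $\gamma$ with $\gamma\cdot o=x$. This produces a $G$-invariant field of null lines on $M$, hence a $G$-invariant null hyperplane distribution $\mathcal D:=\ell^\perp$, which is smooth by homogeneity.

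Then I would show the hypersurface tangent to $\mathcal D$ through $o$ is totally geodesic. The cleanest route is to exhibit $\mathcal D$ as the kernel of a closed $1$-form, or better, to show directly that the second fundamental form vanishes. Here is where the degenerating sequence does real work: one shows that for the isometries $\phi_{h_n}$, the images $\phi_{h_n}(N)$ of a fixed hypersurface $N$ through $o$ tangent to a complementary direction converge to the $\mathcal D$-hypersurface, and that totally geodesic-ness is the only way this limit can be a smooth hypersurface — any nonzero second fundamental form would blow up under the rescaling. Concretely, $H$-invariance of $\ell$ together with metric compatibility forces the null line field to be recurrent for the Levi-Civita connection (its orthogonal complement is parallel along itself), and a recurrent null line whose orthogonal complement is $G$-invariant integrates to a totally geodesic null hypersurface. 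Alternatively one invokes directly that a $G$-invariant null hyperplane field on a homogeneous Lorentzian space is automatically integrable and totally geodesic, since the shape operator is a $G$-invariant tensor that is nilpotent and, evaluated at $o$, must be annihilated by the non-precompact $H$, forcing it to vanish.

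The main obstacle I expect is the rescaling/limit step: making rigorous that a divergent sequence in the isotropy, viewed via the isotropy representation, yields a rank-one null-valued limit and that this limit is genuinely $H$-invariant rather than merely invariant under the cyclic group it generates. One must be careful that the isotropy representation need not be faithful and that its image may not be closed; the fix is to work with the Zariski (or topological) closure of the image in $\O(1,m-1)$, note that non-precompactness is inherited, and use the known classification of closed subgroups of $\O(1,m-1)$ — every such group that is not compact preserves a null line or a null plane, see \cite{benoist-harpe03,olmos-discala01}. Once that invariant null line is in hand, the transport and totally-geodesic arguments are routine homogeneity computations.
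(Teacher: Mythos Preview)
Your approach has a genuine gap: the null line $\ell$ you extract need not be $H$-invariant, and every route you propose for total geodesic-ness ultimately relies on that invariance. The cleanest counterexample is constant sectional curvature, where the isotropy is all of $\O(1,m-1)$ --- certainly non-precompact, yet it preserves no null line whatsoever. Your fallback (``every closed non-compact subgroup of $\O(1,m-1)$ preserves a null line or a null plane'') is simply false, as $\O(1,m-1)$ itself witnesses; what \cite{benoist-harpe03,olmos-discala01} actually show is that an \emph{irreducible} connected subgroup must contain $\SO_0(1,m-1)$, which is a different statement. There is also a smaller error: for a boost $\mathrm{diag}(e^t,1,\ldots,1,e^{-t})$ the rescaled limit has image $\R e_-$ but kernel $e_+^\perp$, not $e_-^\perp$, so ``image $\ell$, kernel $\ell^\perp$'' fails in general. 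Your limit argument (a) in $M$ does not repair this: applying isometries $\phi_{h_n}$ to a fixed hypersurface $N$ preserves the norm of its second fundamental form, so nothing ``blows up'' and there is no mechanism forcing the limit to be totally geodesic.

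The paper's (i.e.\ Zeghib's) argument avoids the invariance issue entirely by passing to $M\times M$ with the neutral metric $(g,-g)$. The graph of each isometry $h_n$ is an $m$-dimensional, totally null, totally geodesic submanifold through $(o,o)$; its tangent space $V_n\subset T_oM\oplus T_oM$ subconverges in the Grassmannian to some $V$ which, since $(h_n)$ diverges, is no longer a graph. The limit $E=\exp_{(o,o)}(V)$ is still totally geodesic and totally null, and one then intersects with the totally geodesic slice $M\times\{o\}$: since $V$ is maximal isotropic in signature $(m,m)$ and $g$ is Lorentzian, $V\cap(\{0\}\times T_oM)$ is exactly one-dimensional, so the intersection is a null hypersurface of $M$ and automatically totally geodesic. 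No $H$-invariance is used or asserted here; that enters only afterwards, when one assembles all such null directions into the isotropy-invariant subspace $W$.
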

The proof of this in \cite[Fact 6]{zeghib04}, see also \cite{steven-thesis} for more details, proceeds by considering  a sequence $(h_n)_{n\in\N}$ in $H$ such that  $\d h_n|_o\in \O(T_oM,g|_o)$ diverges. Let $V_n$ be the graph of $\d h_n |_o$ in $T_o M\oplus T_o M$. Since the Grassmannian of $m$-planes in $\R^{2m}$ is compact, $(V_n)_{n\in \N}$ has a convergent subsequence with limit $V$, which is no longer a graph as otherwise $\d h_n|_o$ could not diverge.
Then all $E_n=  \exp_o(V_n)=\mathrm{graph}(f_n)$ are totally geodesic and totally null  for the metric $(g,-g)$ on $M\times M$, and one can show that the same holds for $E= \exp_o(V)$. 
Finally, since $V$ is not a graph, the intersection of $V$ with $\{0\}\times T_oM$ is not trivial and hence must be of dimension $1$, because $V$ is totally null and $g$ is Lorentzian. Therefore, the projection of $V$ onto  $T_oM\times \{0\}$ is a null hyperplane and $E\cap M\times \{o\}$ is a  totally geodesic hypersurface.

In order to proceed with the proof of Theorem~\ref{zeghibtheo}
one considers the subspace of $T_oM$,
\[ W:=\mathrm{span}\{v\in T_oM\mid v\text{ is null and $\exp_o(v^\perp)$ is a totally geodesic hypersurface}\},\]
which is invariant under the isotropy representation. In the case when $H$ is irreducible (and hence not precompact),  $W$ is non-trivial  by Proposition~\ref{zeghibprop}, and hence must be all of $T_oM$. Then one can show, \cite[Proposition 3]{zeghib04} that this implies constant sectional curvature. 

In the case when the isotropy $H$ of a homogeneous Lorentzian space $(M,g)$ is not irreducible, but indecomposable (and hence not precompact),  $W$ must be degenerate, and thus defines a null line 
$L=W\cap W^\perp$ that is invariant under the isotropy and such  that $\exp_o(L^\perp )$ is a totally geodesic hypersurface. Since $(M,g)$ is homogeneous and since $L$ is isotropy invariant,  this defines a null line and  a vector distribution $\mathcal L\subset \mathcal L^\perp$, whose leaves are totally geodesic.  
We arrive at the following conclusion.
\begin{proposition}\label{zeghibindecprop}
Let $(M,g)$ is a Lorentzian homogeneous space $M=G/H$ with indecomposable, non-irreducible $H$. Then there is a $G$-invariant hyperplane distribution $\mathcal H\subset TM$ that is
auto-parallel, i.e.~$\nabla_\xi\eta\in \Gamma(\mathcal H)$ for all sections $\xi$ and $\eta$ of $\mathcal H$, and hence defines a foliation into $G$-invariant totally geodesic hypersurfaces.
\end{proposition}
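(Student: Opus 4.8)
The plan is to assemble the observations that precede the statement into three steps: at a base point $o\in M$, produce an isotropy-invariant null line $L_o\subset T_oM$ whose orthogonal complement exponentiates to a totally geodesic hypersurface; transport this by the $G$-action to a $G$-invariant hyperplane distribution $\mathcal H$; and show that $\mathcal H$ is auto-parallel, after which the foliation statement is automatic since $\nabla$ is torsion-free.

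For the first step I would work with the subspaces
\[
W_p:=\mathrm{span}\{\,v\in T_pM\mid v\ \text{null and}\ \exp_p(v^\perp)\ \text{a totally geodesic hypersurface}\,\},
\]
noting that $p\mapsto W_p$ is a $G$-invariant distribution (isometries carry totally geodesic hypersurfaces to totally geodesic hypersurfaces), so $W_p=\d g|_o(W_o)$ for $g\cdot o=p$ and $W_o$ is invariant under the isotropy representation of $H$. Since $H$ is indecomposable, its isotropy image is not precompact --- a precompact subgroup of $\O(1,m-1)$ lies up to conjugacy in $\O(1)\times\O(m-1)$ and hence preserves a timelike line and its spacelike orthogonal hyperplane, two proper non-degenerate invariant subspaces, contradicting indecomposability --- so Proposition~\ref{zeghibprop} yields a totally geodesic null hypersurface through $o$ and thus $W_o\neq 0$. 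If $W_o$ were non-degenerate, then $T_oM=W_o\op W_o^\perp$ would split into non-degenerate isotropy-invariant pieces, so indecomposability would force $W_o=T_oM$, and then $(M,g)$ has constant sectional curvature by Zeghib's argument \cite[Prop.~3]{zeghib04}; this case I would handle separately and directly, using that $H$, being also non-irreducible, preserves a null line $L_o$, that $\exp_o(L_o^\perp)$ is then automatically totally geodesic, and that the associated distribution is auto-parallel (indeed parallel in the flat case). So assume $W_o$ is degenerate. Writing $W_o=(W_o\cap W_o^\perp)\op U$ with $g|_U$ non-degenerate, $U$ is positive definite because the ambient metric is Lorentzian, so a null $v=r+u\in W_o$ forces $u=0$; as $W_o$ is spanned by such vectors, $W_o=W_o\cap W_o^\perp$ is totally null, hence one-dimensional. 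Thus $L_o:=W_o$ is an isotropy-invariant null line and $\exp_o(L_o^\perp)$ a totally geodesic null hypersurface.

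For the remaining steps, let $\mathcal L$ be the $G$-invariant null line distribution with $\mathcal L_p=W_p$ at each point (one-dimensional everywhere, by $G$-invariance and the previous paragraph), and put $\mathcal H:=\mathcal L^\perp$, a $G$-invariant hyperplane distribution. Fix $p$; the hypersurface $N_p:=\exp_p(\mathcal L_p^\perp)$ is totally geodesic (an isometric image of $\exp_o(L_o^\perp)$, as $M$ is homogeneous and $\mathcal L$ is $G$-invariant), so its tangent spaces all have the same causal character --- parallel transport within $N_p$ being an isometry --- each $T_qN_p$ being a null hyperplane equal to $\ell_q^\perp$, where $\ell_q$ spans its one-dimensional radical. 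As $N_p$ is totally geodesic it coincides near $q$ with $\exp_q(T_qN_p)=\exp_q(\ell_q^\perp)$, so $\ell_q$ is a null vector with $\exp_q(\ell_q^\perp)$ totally geodesic, i.e.\ $\ell_q\in W_q$; since $W_q$ is one-dimensional this gives $\mathcal L_q=\mathbb R\ell_q=\mathrm{rad}(T_qN_p)$ and hence $\mathcal H_q=\mathcal L_q^\perp=T_qN_p$. Thus $\mathcal H$ restricts along every $N_p$ to its own tangent bundle, so (taking $p=q$) any sections $\xi,\eta$ of $\mathcal H$ restrict along $N_q$ to fields tangent to $N_q$, whence $(\nabla_\xi\eta)_q\in T_qN_q=\mathcal H_q$ because the second fundamental form of the totally geodesic $N_q$ vanishes. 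Therefore $\mathcal H$ is auto-parallel; as $\nabla$ is torsion-free, $[\xi,\eta]=\nabla_\xi\eta-\nabla_\eta\xi$ is again a section of $\mathcal H$, so $\mathcal H$ is integrable, and by the Frobenius theorem it is tangent to a foliation whose leaves are totally geodesic (by auto-parallelism) and permuted by $G$ (by invariance of $\mathcal H$).

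The step I expect to be the main obstacle, and which carries the geometric content, is the globalisation in the last paragraph: reconciling the pointwise, exponential-map definition of $\mathcal H$ with the requirement that it be the tangent distribution of a single totally geodesic foliation. The device that makes this work is to identify the radical direction of $T_qN_p$ as a member of $W_q$, which determines it outright precisely because indecomposability has forced $W_q$ to be one-dimensional; the rest is Zeghib's totally-geodesic-hypersurface principle (Proposition~\ref{zeghibprop}) combined with elementary Lorentzian linear algebra, with \cite[Prop.~3]{zeghib04} invoked only to peel off the constant-curvature case.
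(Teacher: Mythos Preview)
Your argument is correct and follows exactly the route the paper takes: the proposition is not given a separate formal proof there but is stated as the conclusion of the discussion immediately preceding it, which invokes Proposition~\ref{zeghibprop} (non-precompactness of $H$ gives $W_o\neq 0$), uses indecomposability of $H$ to force $W_o$ to be degenerate and hence a single null line $L$, and then transports by $G$. Your globalisation step---showing $T_qN_p=\mathcal H_q$ by recognising the radical of $T_qN_p$ as an element of the one-dimensional $W_q$---and your explicit isolation of the constant-curvature case $W_o=T_oM$ supply the details that the paper's sketch leaves to the reader.
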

Being auto-parallel  is a strictly weaker condition for a vector distribution than being  parallel, which is would be the first step towards proving that $(M,g)$ is a plane wave. We do not know how to proceed from here directly towards Conjecture~\ref{conj}, and hence we will follow the approach of homogeneous structures and Ambrose--Singer connections.

\subsubsection{Homogeneous plane waves}
\label{homplanesec}
A Lorentzian manifold $(M,g)$ with Levi-Civita connection $\nabla$ and curvature tensor $R$ is called a {\em pp-wave (plane fronted with parallel rays)} if it admits a parallel $\nabla$-parallel null vector field $\xi$ and obeys the curvature condition
\begin{equation}
\label{ppcurv}
R(X,Y)=0,\quad\text{ for all }X,Y\in \xi^\perp=\{Z \in TM\mid g(X,\xi)=0\}.\end{equation}
Because of the symmetries of $R$, this condition is equivalent to the condition that all $R(U,V)$, for $U,V\in TM$, map the vector distribution $\xi^\perp$ to the line bundle $\R\cdot \xi$. Locally, the metric $g$ of an $m=n+2$-dimensional  pp-wave is given as
\begin{equation}\label{pploc}
2\,\d v\, \d t +\delta_{ij}\, \d x^i \d x^j+ h(t,x^1,\ldots, x^n)\, \d t^2 ,\end{equation}
for a function $h$ of the coordinates $t$ and $x^i$, where $i,j,k=1, \ldots, n$. If there is a point where the matrix $\partial_i\partial_j h$ is non-degenerate, then the holonomy group of a pp-wave is indecomposable. Hence, indecomposable pp-waves are examples of Lorentzian manifolds with special holonomy, i.e. the holonomy algebra is reduced from $\so(1,n+1)$ but indecomposable. In fact, their holonomy is given by $\g_-=\R^n$, which is an abelian ideal in the stabiliser of a null vector  in $\so(1,n+1)$, see Section~\ref{algsec1} for details.

A {\em plane wave} is a pp-wave that satisfies the additional curvature condition
\begin{equation}
\label{planecurv}
\nabla_XR=0,\quad\text{ for all }X\in \xi^\perp.\end{equation}
In the local form of the metric this forces the function $h$ to be o the form \begin{equation}
\label{planeloc}
h(t,x^1,\ldots, x^n) =x^iQ_{ij}(t)x^j, \quad\text{ for $Q_{ij}$ a symmetric matrix of functions of  $t$ .}\end{equation}
A plane wave is locally symmetric if and only if the functions $Q_{ij}$ are constant.
Hence, a special class of plane waves are the Lorentzian symmetric spaces with indecomposable, non-irreducible holonomy, the {\em Cahen--Wallach spaces}. They are defined by $M=\R^{n+2}$ and $g$ as in (\ref{pploc}) with $h(t,x^1,\ldots, x^n)= x^i Q_{ij} x^j$ for a symmetric, non-degenerate, constant matrix $Q=(Q_{ij})$.  Cahen and Wallach have shown in \cite{cahen-wallach70,Cahen98} that a Lorentzian symmetric space  either has constant sectional curvature or is universally covered by a Cahen--Wallach space.  The isometry algebra of a Cahen--Wallach space is equal to 
a semi-direct sum of the $(2n+1)$-dimensional Heisenberg algebra with $\R\oplus \z_{\so(n)}(Q)$, where $\z_{\so(n)}(Q)$ is the centraliser in $\so(n)$ of the matrix $Q$. The isotropy algebra is given by $\R^n\rtimes  \z_{\so(n)}(Q)\subset \so(1,n+1)$ and hence indecomposable. 

Homogeneous plane waves other than Cahen--Wallach spaces have been classified in \cite{blau-oloughlin03}. They split into {\em regular} and {\em singular} homogeneous plane waves and are parametrised by  a skew-symmetric constant matrix $F$ and  a symmetric constant matrix $Q_0$. For the regular ones it is $M=\R^{1,n+1}$ and the matrix  $Q=(Q_{ij})$ in~(\ref{planeloc}) is of the form
\[
Q(t)=\exp(t F) Q_0 \exp(-t F),
\]
where $\exp$ denotes the matrix exponential.
A singular homogeneous plane wave is defined on a open set $\{ t>a\}$ of $\R^{n+2}$, for  $a\in \R$, and the matrix  $Q=(Q_{ij})$ in~(\ref{planeloc}) is of the form
\[
Q(t)=\tfrac{1}{(t-a)^2} \exp(\log(t-a)  F) Q_0 \exp(-\log(t-a)  F).
\]
The proof of this result proceeds studying the implications for the metric of the existence of $\ge m$ solutions of  the Killing equation. 
With this classification, our Theorem~\ref{maintheo} implies that a reductive homogeneous Lorentzian manifold, whose canonical connection has indecomposable, non-irreducible holonomy, must  be one of these two types of manifolds. For both types, the isotropy for the isometry algebra contains $\R^n=\g_-\subset \so(1,n_+1)$, and hence is indecomposable and, if the Levi-Civita holonomy is also indecomposable, equal to their holonomy.

Also by analysing the Killing equation, in \cite{GlobkeLeistner16} it has been shown that a locally homogeneous pp-wave $(M,g)$ with indecomposable holonomy and such that the rank of the curvature operator acting on $2$-forms is greater than $1$ must be a locally homogeneous plane wave. The condition on the rank of $R$ is of course not satisfied in dimension $m=3$ and there is an example of a $3$-dimensional, indecomposable locally homogeneous pp-waves that is not a plane wave, \cite[Example 4.1]{GlobkeLeistner16}, where 
\[g=2\d v \d t  +\e^{2 x}\d t ^2 +\d x^2.\]
This example however has a $3$-dimensional Lie algebra of Killing vector fields and hence trivial isotropy, and thus is not in conflict with  Conjecture~\ref{conj}. See the next paragraph  and  Section~\ref{dim3sec} for results in dimension $3$.

\subsubsection{Lorentzian homogeneity in dimension $3$}
\label{dim3sec0}
Theorem~\ref{maintheo} and Conjecture~\ref{conj} do not hold in dimension $3$, see Section~\ref{dim3sec}. Nevertheless, there is a classification of Lorentzian homogeneous spaces in dimension $3$, which we will briefly review here. It has been claimed in \cite[Theorem 3.1]{calvaruso07homog} that a $3$-dimensional Lorentzian homogeneous space is locally isometric to a symmetric space or a Lie group with left-invariant metric. 
This theorem may be missing the assumption of completeness, see \cite[Theorem 1.1]{calvaruso07homog} instead or
\cite[Theorem 4.3.3]{CalvarusoCastrillon-Lopez19}. Indeed,
 it has been noted recently in \cite[Theorem 1.6]{AlloutBelkacemZeghib23}
that there is an incomplete homogeneous plane wave that is neither locally symmetric nor locally isometric to a Lie group with left invariant metric. This metric is defined on $\{t>0\}\subset \R^3$ and is of the form
\[g_b:=2\, \d v\,\d t- \tfrac{ b\,x^2}{t^2} \d t^2 +\d x^2,\quad\text{ for some constant $b>\tfrac{1}{4}$},\]
 see  \cite[Theorem 1.3]{AlloutBelkacemZeghib23}. 
  Locally symmetric spaces are classified by \cite{cahen-wallach70}, see previous section, whereas  $3$-dimensional Lie groups with left-invariant Lorentzian metric have been classified in \cite{Rahmani92}. Our counterexample to Theorem~\ref{maintheo} and Conjecture~\ref{conj}  in Section~\ref{dim3sec} is a left-invariant metric on the universal cover of $\SL(2,\R)$.
%
%

\section{Algebraic results}\label{algsec}

\subsection{Algebraic conventions for Minkowski space}
\label{algsec1}
Let $V:=\R^{1,n+1}$ be the $(n+2)$-dimensional Minkowski space with $n\ge 1$, which is assumed from now on,  and denote by $\<.,.\>$  the Minkowski inner product with one negative and $n+1$ positive eigenvalues. We fix a basis $(e_-, e_1, \ldots, e_n, \e_+)$ such that 
\[\I:=\big(\< e_A,e_B\> \big)_{A,B=-, 1\,\ldots, n, +}=\begin{pmatrix} 0&0&1\\ 0&\1_n&0\\1&0&0\end{pmatrix},\]
where here, and from now on capital indices $A,B,\ldots$ run over $-, 1, \ldots, n, +$. Small indices $i,j,k,\ldots$ will run from $1$ ton $n$. 
We call such a basis a {\em Witt basis}. With respect to a Witt basis
we define 
\[V_-:=\R\cdot \e_-,\qquad V_+:=\R\cdot \e_+ ,\qquad V_0:=V_-^\perp\cap V_+^\perp=\Span(e_i)_{i=1,\ldots, n}\simeq \R^n.\]
Note that $\<.,.\> $ restricts on $V_0$ to the standard Euclidean inner product $\<e_i,e_j\>=e_i^\top e_j=\delta_{ij}$. 
We  define a basis of $V^*$ as
\[e^-:=\<e_+,.\>, \quad e^i:=\<e_i,.\>,\qquad e^+:=\<e_-,.\>,\]
which is dual to the basis $(e_-, e_1, \ldots, e_n, \e_+)$, i.e.~$e^A(e_B)=\delta^A{}_B$.
We set
\[V^-:=\R\cdot \e^-=V_-^*,\qquad V^+:=\R\cdot \e^+=V_+^* ,\qquad V^0:=\Span(e^i)_{i=1,\ldots, n}=V_0^*.\]
The Lie algebra
\[\g:=\{X\in \gl(n+1,\R)\mid A^\top \I+\I A=0\},\]
is conjugated in  $\gl(n+1,\R)$ to $\so(1,n+1)$. Fixing the {\em grading element}
\[E:=\begin{pmatrix}
-1&0&0 \\0 &0& 0\\0&0&1
\end{pmatrix},\]
yields the splitting $V=V_-\+V_0\+V_+$ into the eigenspaces of $E$ in $V$, and 
  a $|1|$-grading of $\g$ into 
\[\g=\g_-\+\g_0\+\g_+,
\]
where $\g_\pm$  and $\g_0$  are the eigenspaces to the eigenvalue $\pm1$ and $0$ for $\mathrm{ad}_E=[E, -]$.
Explicitly,
\[
\g_0:=
\left\{(a,A):=
\begin{pmatrix}
a&0&0 \\0 &A& 0\\0&0&-a
\end{pmatrix}\mid a\in \R, A\in \so(n)\right\}= \R\cdot E\+\so(n)\simeq\co(n),\]
and
\[
\g_-:=
\left\{\overline{v}:=
\begin{pmatrix}
0&-v^\top&0 \\0 &0& v\\0&0&0
\end{pmatrix}\mid v\in V_0\right\},\qquad \g_+:=\g_-^\top.\]
We have the relations
\begin{equation}
\label{grading}\g_a(V_b) = V_{a+b},\quad\text{ 
where $a,b=-,0,+$ and $V_c=\{0\}$ if $c\not\in\{-,0,+\}$. }\end{equation}
Clearly, $\g_\pm$ are isomorphic to $V_0=\R^n$ and we define the projection
\[\begin{array}{crcl}
\pi_- : &\g &\longrightarrow & \g_-\ \simeq \ V_0\simeq \R^n
\\
&(a,A) +\overline{v} +\overline{w}^\top&\mapsto& \overline{v} \ \mapsto \ v.\end{array}\]
We slightly abuse notation by using $\pi_-$ for the map with range $\g_-$ but also with range $\R^n$. 

Similarly, we define $\pi_+( X^\top )= (\pi_- (X))^\top\in \g_+$. 
We also have $\g_0\simeq \co(n)$ and we define the projection
\[\begin{array}{crcccl}
\pi_0 : &\g &\longrightarrow & \g_0 &\simeq & \co(V_0)\simeq \R\+\so(V_0)\simeq \R\+\so(n)
\\
&(a,A) +\overline{v}+\overline{w}^\top&\longmapsto& (a,A) & \mapsto & a\Id+A.\end{array}\]
We also define the stabiliser of $\R\cdot e_-$,
\[\p:=\stab(\R\cdot \e_-)=\g_0\ltimes \g_-\simeq \co(V_0)\ltimes V_0=\co(n)\ltimes \R^n, \]
which is the Lie algebra of the Poincar\'{e} conformal group\footnote{We are aware that our convention for the grading element and hence of labelling $\g_\pm$ differs from the  standard reference \cite{cap-slovak-book1} on parabolic geometry, which is based on representation theoretic considerations.
We follow the convention in the literature relevant for Lorentzian homogeneous spaces, e.g.~in \cite{blau-oloughlin03},
where the invariant null line is spanned by the first coordinate vector $e_-=\partial_-$, which forces us to denote $\p=\g_0\ltimes \g_-$ if we want to maintain the relation (\ref{grading}).}.
The Lie bracket in $\p$ is as follows
\[
\pi_0(
[X,Y])
=
\left[ \pi_0(X),\pi_0(Y)\right]_{\co(n)},\qquad  \pi_-(
[X,Y])= \pi_0(X) \cdot \pi_-(Y) - \pi_0(Y)\cdot  \pi_-(X),
\]
for $X,Y\in \p$ and where the dot denotes the representation of $\co(n)$ on $\R^n$.
Hence, $\g_-$ is an abelian ideal in $\p$. 
We have further projections
\[\pi_\R:\g_0\ni (a,A)\to a\in \R,\qquad \pi_{\so(n)}:\g_0\ni (a,A)\to A\in \so(n).\]

\subsection{Indecomposable subalgebras of $\g$}
We say that a subalgebra $\h$ of $\so(1,n+1)$ is {\em indecomposable} if there is no $\h$-invariant, non-degenerate subspace of Minkowski space. 
Clearly, irreducible subalgebras are indecomposable, and the first example of an indecomposable, non-irreducible algebra is $\so(1,1)$ which fixes two complementary null lines. Hence, from now on we assume that $n\ge 1$, so that $\so(1,n+1)$ is irreducible.
It is well-known \cite{benoist-harpe03,olmos-discala01,att05} that $\so(1,n+1)$ does not admit any proper irreducible subalgebras, so we will restrict to indecomposable, non-irreducible subalgebras. If $\h\subset \so(1,n+1)$ is an indecomposable, non-irreducible subalgebra, then it must be contained in the stabiliser of a null line, and hence is conjugated to a subalgebra of $\p\subset \g$, both  defined in the previous subsection.
If a subalgebra $\h$ in $\p$ is indecomposable, then $\pi_-(\h)=\g_-$. Moreover, $\h$ leaves invariant not just a null line but a null vector if and only if $\pi_\R(\h)=\{0\}$.
More importantly, indecomposable subalgrebas of $\p$ can be distinguished into four types.
\begin{theorem}[Berard-Bergery \& Ikemakhen \cite{bb-ike93}]\label{bb-ike-theo}
Let $\h$ be an indecomposable subalgebra of $\g\simeq\so(1,n+1)$ with $n\ge 1$. Let  $\h_0=\pi_0(\h)$ and let $\z$ and $\s$ be the centre and the semisimple part of  $\pi_{\so(n)}(\h_0)=\z\+\s$. Then $\pi_-(\h)=\g_-$. In addition, if $\g_-\subset \h$, then $\h=\h_0\ltimes \g_-$ and $\h$ is of 
\begin{description}
\item[type 1] $\R\subset \h$,
\item[type 2] $\pi_\R(\h)=0$, i.e. $ \h_0\subseteq \so(n)$, or
\item[type 3] there is a $\vf\in \z^*$ such that $\h_0=\mathrm{graph}(\vf)\+\s$.
\end{description}
Otherwise, i.e.~if $\g_-\not\subseteq \h$, then $\h_0\subseteq \so(n)$ and there is a splitting of $\g_-=V_0=V_1\+V_2$ and a surjective linear map $\psi:\z\to V_1 $ such that $\h_0(V_1)=0$ and  $\h=(\mathrm{graph}(\psi)\+\s)\ltimes V_2$. This is denoted as {\bf type 4}. Note that in this case we must have  $\dim(V_1)\ge 1$ and $\dim(V_2)\ge 2$ 

\end{theorem}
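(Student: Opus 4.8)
The plan is to prove Theorem~\ref{bb-ike-theo} by exploiting the $|1|$-grading $\g=\g_-\+\g_0\+\g_+$ together with the fact that any indecomposable $\h\subset\p$ projects \emph{onto} $\g_-$. First I would establish the surjectivity $\pi_-(\h)=\g_-$: if $\pi_-(\h)\subsetneq\g_-$, then its image is a proper subspace $U\subsetneq V_0$, and one checks using the bracket relations in $\p$ (recorded just before the theorem) that $U^\perp\cap V_0$ together with $\R\cdot e_-$ spans an $\h$-invariant subspace; taking its orthogonal complement, or pairing it suitably with $e_+$, produces an $\h$-invariant non-degenerate subspace, contradicting indecomposability. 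A cleaner route is: $W:=\Span(e_-)\+\pi_-(\h)\subset V_-\+V_0$ is $\h$-invariant and totally isotropic; if $\pi_-(\h)\ne V_0$ pick $0\ne v\in V_0$ with $v\perp\pi_-(\h)$ and show $\Span(v)$ is $\h$-invariant and non-degenerate.

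Next, assume $\g_-\subseteq\h$. Since $\g_-$ is an abelian ideal in $\p$ and $\h\subseteq\p$, one has $\h=\h_0\ltimes\g_-$ with $\h_0=\pi_0(\h)\subseteq\co(n)\simeq\R\cdot E\+\so(n)$; indeed the inclusion $\h\supseteq\g_-$ forces $\h=(\h\cap\g_0)\ltimes\g_-$ and $\h\cap\g_0$ is carried isomorphically onto $\h_0$ by $\pi_0$. Now decompose $\pi_{\so(n)}(\h_0)=\z\+\s$ into centre and semisimple part; this is just the standard structure theory of a reductive subalgebra of $\so(n)$ (a subalgebra of a compact Lie algebra is reductive, hence centre plus semisimple). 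The type trichotomy is then an elementary case distinction on the position of $\h_0$ relative to the line $\R\cdot E$: either $\R\cdot E\subseteq\h_0$ (type 1); or $\pi_\R(\h_0)=0$, i.e.\ $\h_0\subseteq\so(n)$ (type 2); or $\pi_\R$ restricted to $\h_0$ is nonzero with kernel strictly containing $\s$, which by dimension count forces $\h_0$ to be the graph of a linear functional $\vf\in\z^*$ direct-summed with $\s$ (type 3). The point is that $\pi_\R|_{\h_0}$ kills $\s$ automatically (a semisimple Lie algebra has no nontrivial maps to $\R$), so only the behaviour on $\z$ matters.

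Finally, suppose $\g_-\not\subseteq\h$. Set $V_2:=\pi_-(\h\cap\n)$ where $\n=\g_-$ is the nilradical direction, or more precisely let $V_2:=\h\cap\g_-\subsetneq V_0$ (identified with a subspace of $V_0$); this is a proper subspace. One shows first that $\h_0\subseteq\so(n)$: if some element of $\h_0$ had a nonzero $\R\cdot E$-component, bracketing with $\g_-$-elements and using $[E,\overline v]=\overline v$ would let us generate all of $\g_-$ inside $\h$, contradicting $\g_-\not\subseteq\h$. Next, $V_2$ is $\h_0$-invariant (again by the bracket $\pi_0(X)\cdot\pi_-(Y)$ relation), so choosing $V_1$ to be an $\h_0$-invariant complement to $V_2$ in $V_0$ (possible since $\h_0\subseteq\so(n)$ is compactly embedded, hence every invariant subspace has an invariant complement), we get $V_0=V_1\+V_2$. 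Surjectivity of $\pi_-(\h)=\g_-=V_0$ together with $\h\cap\g_-=V_2$ means $\h/( \h\cap\g_-)$ maps isomorphically onto $V_1$ via the composite with $\pi_-$ followed by projection to $V_1$; tracking this through $\g_0\+\g_-$ produces a linear map $\psi$ whose domain must be $\z$ (the $\s$-part cannot contribute, as $\s$ acts trivially on no nonzero vector unless that vector is genuinely fixed, and one argues $\s(V_1)=0$ forces $V_1$ to lie in the trivial $\s$-isotypic component), yielding $\h=(\mathrm{graph}(\psi)\+\s)\ltimes V_2$. The dimension bounds $\dim V_1\ge1$ (else $\g_-\subseteq\h$) and $\dim V_2\ge2$ (a one-dimensional $V_2$ would be an invariant null line whose orthogonal complement is non-degenerate and invariant, violating indecomposability — this is the subtle point, since a single null line in $V_0$ does not by itself contradict indecomposability, but combined with $V_1\ne0$ and the structure of $\h$ it does) complete the proof.

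The main obstacle I anticipate is the type~4 analysis: disentangling how $\s$, $\z$, $V_1$ and $V_2$ interact, in particular verifying that the semisimple part $\s$ must act trivially on $V_1$ and that the ``graph'' structure over $\z$ is forced rather than merely possible, and separately nailing down the sharp bounds $\dim V_1\ge1$, $\dim V_2\ge2$ from indecomposability. The cases $\g_-\subseteq\h$ are comparatively routine once surjectivity of $\pi_-$ is in hand.
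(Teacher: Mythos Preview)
The paper does not prove this theorem: it is stated with attribution to B\'erard-Bergery and Ikemakhen \cite{bb-ike93} and used as a black box, with no proof supplied. So there is no ``paper's own proof'' against which to compare your proposal.

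That said, a few remarks on your sketch. Your ``cleaner route'' for $\pi_-(\h)=\g_-$ does not work as written: for $v\in V_0$ with $v\perp\pi_-(\h)$ and $X=X_0+\overline{x}\in\h$, you correctly get $Xv=X_0 v$, but $X_0 v$ has no reason to lie in $\Span(v)$, so $\Span(v)$ need not be $\h$-invariant. One must instead work with an $\h_0$-invariant complement, and for that you first need to know that some relevant subspace (e.g.\ $\h\cap\g_-$, which is an ideal in $\h$) is $\h_0$-invariant. Your argument for $\dim V_2\ge 2$ is also off: $V_2$ sits inside the Euclidean $V_0$, so a one-dimensional $V_2$ is spacelike, not null. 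The actual reason is that $\h_0(V_1)=0$ forces $\h_0\subseteq\so(V_2)$, and since $\psi:\z\to V_1$ is surjective onto a nonzero space one needs $\z\ne 0$, hence $\so(V_2)\ne 0$, hence $\dim V_2\ge 2$. The broad architecture of your argument (grading, case split on $\pi_\R$, graph structure over $\z$) is the standard one, and your identification of the type~4 case as the delicate part is accurate.
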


\begin{remark}
Theorem~\ref{bb-ike-theo} provides the foundation for the classification of indecomposable Lorentzian holonomy algebras. If such an algebra $\h$ is properly contained in $\g\simeq \so(1,n+1)$ it must be of one of the four types in the theorem. Moreover, it has been shown in \cite{leistnerjdg} that the $\so(n)$-projection of $\h$ is a Riemannian holonomy algebra, which led to a classification of possible holonomy algebras.
 For algebras of type $1$ and $2$ it is fairly straightforward to construct metrics with holonomy $\h$. For types $3$ and $4$ first examples of metrics appeared in  \cite{bb-ike93}, and in   \cite{galaev05} it was finally  shown that for every possible algebra $\h$ of type $3$ and $4$ with $\pi_{\so(n)}(\h)$ a Riemanian holonomy algebra there is a metric with holonomy $\h$. 
 \end{remark}

In the following we will require the following result about the centraliser in $\p$ of an indecomposable subalgebra of $\g$.
\begin{proposition}\label{centprop}
Let $\h$ be an indecomposable subalgebra of $\p$ and let $\z_\g(\h)$ be its centraliser in $\g\simeq \so(1,n+1)$ with $n\ge 1$. If $\pi_\R(\h)\not=\{0\}$,
then $
 \z_\g(\h)=\{0\}$, and otherwise \[\z_\g(\h) =\{\overline{z}\in\g_-\mid \pi_0(\h)\cdot z=0\}\subseteq \g_-.\]
 \end{proposition}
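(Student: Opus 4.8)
The plan is to compute the centraliser $\z_\g(\h)$ directly by decomposing an arbitrary element $Z\in\g$ along the $|1|$-grading $\g=\g_-\+\g_0\+\g_+$ and using that $\h$ is indecomposable, so that $\pi_-(\h)=\g_-$ by Theorem~\ref{bb-ike-theo}. Write $Z=\overline{z}+Z_0+\overline{w}^\top$ with $\overline{z}\in\g_-$, $Z_0\in\g_0$, $\overline{w}^\top\in\g_+$. For each $X\in\h$, decompose $X=\overline{x}+X_0$ (recall $\pi_+(\h)=0$ since $\h\subseteq\p$) and expand $[Z,X]=0$ using the grading relations $[\g_a,\g_b]\subseteq\g_{a+b}$. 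The commutator $[Z,X]$ then has components in $\g_-,\g_0,\g_+$ (there is also potentially a $\g_+$-to-$\g_+$ free part from $[\g_0,\g_+]$, but the structure is $|1|$-graded so everything closes); setting each graded component to zero will give a system of linear equations that pins down $Z_0$ and $\overline{w}^\top$ and constrains $\overline{z}$.

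First I would handle the $\g_+$-component of $[Z,X]$: the only contributions come from $[Z_0,\overline{w}^\top]$ and $[\overline{w}^\top,X_0]$, i.e.\ from $\co(n)$ acting on $\g_+\simeq(\R^n)^*$. Since $\pi_0(\h)$ acts on $\g_-$ with $\pi_-(\h)=\g_-$ in full (indecomposability), one shows $\overline{w}^\top$ must vanish — the point is that $w$ would have to be centralised by all of $\pi_0(\h)$ while also $\g_-\subseteq[\g_0\text{-action},\dots]$ forces enough elements of $\pi_0(\h)$ to be present; more cleanly, $[\overline{v},\overline{w}^\top]$ for $\overline{v}\in\g_-$ ranging over all of $\g_-$ has a $\g_0$-part equal (up to sign) to the rank-one-type endomorphism $v\,w^\top\mp w\,v^\top + \langle v,w\rangle$-scalar, and requiring this to be cancelled by $[Z_0,\text{something}]$-type terms for \emph{all} $v$ forces $w=0$. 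Next, with $\overline{w}^\top=0$, the $\g_0$-component of $[Z,X]=0$ reads $[Z_0,X_0]+(\g_0\text{-part of }[\overline{z},\overline{x}])=0$; since $\g_-$ is abelian, $[\overline{z},\overline{x}]=0$ has no $\g_0$-part coming from within $\g_-$, so actually the $\g_0$-component is $[Z_0,X_0]=0$ for all $X_0\in\pi_0(\h)=\h_0$, i.e.\ $Z_0\in\z_{\co(n)}(\h_0)$. Finally the $\g_-$-component gives $Z_0\cdot x - X_0\cdot z=0$ for all $X=\overline{x}+X_0\in\h$.

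The endgame splits on $\pi_\R(\h)$. Taking $X=\overline{x}$ with $X_0=0$ (possible since $\pi_-(\h)=\g_-$, so every $\overline{x}\in\g_-$ arises, although possibly only as the $\g_-$-part of some $X$ with nonzero $X_0$ — I would instead note that the $\g_-$-part of $\h$ being all of $\g_-$, together with $Z_0\cdot x - X_0\cdot z=0$, forces $Z_0\cdot x = X_0\cdot z$ to depend only on the $\g_0$-part), one deduces $Z_0$ acts trivially enough that if $\pi_\R(\h)\neq\{0\}$ then taking $X_0$ with nonzero $\R$-part (a nonzero multiple of the grading element $E$, which acts on $\g_-$ as $-\Id$, or $+\Id$ depending on convention) gives $X_0\cdot z = \pm z$, while $Z_0\cdot x=0$ for suitable $x$, forcing $z=0$ and then $Z_0=0$ as well (using $Z_0\in\z_{\co(n)}(\h_0)$ and that the only element of $\co(n)$ acting trivially is $0$ once the $\R$-part is killed). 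Hence $\z_\g(\h)=\{0\}$. If instead $\pi_\R(\h)=\{0\}$, then $\h_0\subseteq\so(n)$; here $Z_0\in\z_{\so(n)}(\h_0)$, but one checks that such a nonzero $Z_0$ cannot satisfy $Z_0\cdot x = X_0\cdot z$ for all $X$ unless $Z_0=0$ (the left side is linear in $x$ ranging over all of $V_0$ while the right is independent of $x$), leaving $Z=\overline{z}$ with $\pi_0(\h)\cdot z=0$, which is exactly the claimed description $\z_\g(\h)=\{\overline{z}\in\g_-\mid\pi_0(\h)\cdot z=0\}\subseteq\g_-$; conversely any such $\overline{z}$ centralises $\h$ because $\g_-$ is abelian and the $\g_0$-action condition handles the $[\g_0,\g_-]$ brackets.

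\textbf{Main obstacle.} The technical heart is the first step, killing the $\g_+$-component $\overline{w}^\top$: one must extract from indecomposability (i.e.\ $\pi_-(\h)=\g_-$) enough information about which elements of $\co(n)$ lie in $\h_0$ to conclude $w=0$. The subtlety is that $\h_0=\pi_0(\h)$ can be small (even zero, in type~2 with $\s=\z=0$, e.g.\ $\h=\g_-$ itself), so one cannot argue via $\z_{\co(n)}(\h_0)$ being small; instead one must use that for \emph{every} $\overline v\in\g_-$ there is an element of $\h$ with that $\g_-$-part, and play the bracket $[Z,\overline v\text{-lift}]=0$ across \emph{all} $v$ simultaneously — the $\g_0$- and $\g_+$-components of these brackets, as $v$ varies, span enough to force $w=0$ and to control $Z_0$. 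Carrying out this "vary $v$" argument cleanly, keeping track of the three graded pieces at once, is where the real work lies; the case analysis on $\pi_\R(\h)$ afterwards is then routine linear algebra in $\co(n)\ltimes\R^n$.
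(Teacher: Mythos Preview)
Your overall plan is sound and will work, but the paper takes a neater route for what you correctly identify as the main obstacle. Instead of killing the $\g_+$-part $\overline{w}^\top$ by a graded computation, the paper observes that for $Z\in\z_\g(\h)$ the line $\R\cdot Ze_-$ is $\h$-invariant: since $[X,Z]=0$ and $Xe_-\in\R e_-$ for $X\in\h\subseteq\p$, one has $X(Ze_-)=Z(Xe_-)\in\R\cdot Ze_-$. In Lorentzian signature two distinct invariant null lines would span a nondegenerate $2$-plane, so indecomposability forces $Ze_-\in\R e_-$, i.e.\ $Z\in\p$, in one line. Your approach via the $\g_0$-component of $[Z,\overline{x}]$ for $\overline{x}\in\h\cap\g_-$ also works (the $\so(n)$-part $xw^\top-wx^\top=0$ for a single nonzero $x$ already gives $w\parallel x$, and the $\R$-part $x^\top w=0$ then kills $w$), but requires a few more lines and implicitly a type-by-type check of what $\h\cap\g_-$ is.

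After $Z\in\p$ both arguments use the same $\g_-$-equation $\pi_0(X)\cdot z-Z_0\cdot\pi_-(X)=0$. Two small corrections to your endgame. First, from $[Z_0,X_0]=0$ you only get $Z_0\in\z_{\co(n)}(\h_0)$, not $\z_{\so(n)}(\h_0)$; the $\R$-part of $Z_0$ still has to be killed separately (this is harmless once you have $Z_0\cdot x=0$ for a single nonzero $x$). Second, your claim that ``the right side $X_0\cdot z$ is independent of $x$'' fails for type~4, where $X_0$ is tied to $x$ via $\psi$ on $V_1$. The paper handles this by first taking $X=\overline{x}\in V_2=\h\cap\g_-$ to get $Z_0|_{V_2}=0$ (hence $Z_0\in\so(V_1)$), and then for $X=X_0+\overline{\psi(X_0)}$ the equation $Z_0\cdot\psi(X_0)=X_0\cdot z$ has left side in $V_1$ and right side in $V_2$ (since $\h_0$ annihilates $V_1$), forcing both to vanish; surjectivity of $\psi$ onto $V_1$ then gives $Z_0=0$. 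With these fixes your argument goes through.
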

\begin{proof}
Let $Z\in \z_\g(\h)$. First  we note that if $Ze_-\not=0$, the line spanned by $Ze_-$ is left invariant by $\h$. Hence, since $\h$ is indecomposable, this line must be $\R\cdot e_-$, so that $Z\in \p$. With this, set $z:=\pi_-(Z)$ and $Z_0:=\pi_0(Z)$. For any $X\in \h$ we have 
\begin{equation}\label{cent}
0= \pi_-([X,Z]) =\pi_0(X)\cdot z - Z_0\cdot\pi_-(X).
\end{equation}
First assume that $\pi_\R(\h)\not=0$, i.e.~$\h$ is of type 1 or 3. Then we can choose $X\in \h$ such that $\pi_\R(X)\not=0$ and $\pi_-(X)=z$. Then from (\ref{cent}) we get
\[
0=(\pi_-([X,Z]))^\top z=(\pi_0(X-Z)z)^\top z =(\pi_\R(X)-\pi_\R(Z) ) z^\top z.
\]
Since for types 1 and 3 we can choose $X$ such that $\pi_\R(X)\not=\pi_\R(Z)$ independently of $\pi_-(X)$, we get $z=0$. With this, equation(\ref{cent}) implies that $0=Z_0\cdot\pi_-(X)$ for all $X\in \h$ and hence, with $\pi_-(\h)=V_0$, that $Z_0=0$. This implies  that $Z=0$ in the cases of Type 1 and 3.

If $\h$ is of type 2, we can choose $X\in \g_-\subset \h$ arbitrary to conclude from (\ref{cent}) that $Z_0=0$, so that $Z\in \g_-$.

Similarly, for type 4 we get that $Z_0$ acts trivially on $V_2$ and hence leaves $V_1$ invariant. From this  invariance and by recalling that  $\pi_0(X)$ acts trivially on $V_1$, we conclude that equation (\ref{cent}) for an arbitrary $X\in \mathrm{graph}(\vf)$ implies that $Z_0$ also acts trivially on $V_1$, so that $Z_0=0$.

Finally, with $Z=\overline{z}\in \g_-$, it is $[X,Z]=\pi_0(X)\cdot z$ and the statement follows.
\end{proof}

\subsection{Trivial submodules in $V^*\otimes \g$}

In this section, for a  indecomposable subalgebra in $\p$ of type 2 or 4,  we will determine the largest trivial submodule in $V^*\otimes \g$. 
We view $S\in V^*\otimes \g=\mathrm{Hom}(V,\g)$ as a homomorphism from $V\ni v$ to $S(v)\in\g$, so 
 that the representation of $X\in\h$ on $S\in V^*\otimes \g$ can conveniently be written as
\[ X\cdot S (v):=[X,S(v)] -S(Xv)\ \in \ \g,\]
for $v\in V=\R^{1,n+1}$. Here by $Xv$ we denote the action of $X\in \h$ on $v=a e_- +v_0+ be_+$, which is given as
\[Xv= \underbrace{(\pi_0(X)) \cdot v_0 +b \pi_-(X) }_{\in V_0} - (\pi_-(X)^\top v_0 )\,e_-.\]
A useful identity is also 
$\pi_-(X)=Xe_+$.
\begin{theorem}\label{algtheo}
Let $\h$ be an indecomposable subalgebra of $\p=(\R\+\so(n))\ltimes \R^n$ with $n\ge 2$,  and let $W$ be a trivial submodule of $\h$ in $V^*\otimes \g$.
If $\mathrm{pr}_\R(\h)\not=0$, then $W=\{0\}$. Otherwise, i.e.~if $\h\subset \so(n)\ltimes \R^n$, it holds that
\[W\subseteq (V^0\otimes \g_-) \+ (V^+\otimes \p),\] and every $S\in W$ satisfies
\begin{equation}\label{algtheoequ}S(x)e_+ =-\pi_0(S(e_+))\cdot x,  \text{ for all }x\in V_0,\quad \text{ and }\quad \left[ \h_0 , \pi_0(S(e_+)\right]=\{0\},\end{equation}
where $\h_0:=\pi_0(\h) \subseteq \so(n)$.
In addition, if $\h$ is of type 2, then $\h_0$ acts trivially on $ \pi_-(S(e_+))$, while in type 4 we have that
$\psi^{-1}(V_1)\+\s$ acts trivially on $\pi_-(S(e_+))$, where $\s$ is the semismple part of $\h_0$.
\end{theorem}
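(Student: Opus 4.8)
\textbf{Proof plan for Theorem~\ref{algtheo}.}
The strategy is to exploit the triviality condition $X\cdot S=0$ for all $X\in\h$, unpacked component-by-component via the grading $\g=\g_-\+\g_0\+\g_+$ and the decomposition $V=V_-\+V_0\+V_+$. First I would note that since $\h$ is indecomposable, $\pi_-(\h)=\g_-\simeq V_0$ by Theorem~\ref{bb-ike-theo}, so every direction in $V_0$ is realised as $Xe_+$ for some $X\in\h$; this is the lever that produces strong constraints. Writing $S(v)=S_-(v)+S_0(v)+S_+(v)$ for the graded pieces, the equation $[X,S(v)]-S(Xv)=0$ splits into its $\g_-$, $\g_0$, $\g_+$ components. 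The plan is: (i) test it on $v=e_-$, where $Xe_-\in\R e_-$ for $X\in\p$, to force $S(e_-)$ into a small space — in fact to kill the $\g_+$ and $\g_0$ parts and pin down $S(e_-)\in\g_-$ under the non-degeneracy assumptions, ruling out contributions when $\pi_\R(\h)\ne0$; (ii) test it on $v=x\in V_0$ and on $v=e_+$; (iii) compare the $\g_+$-component of the $v=x$ equation with the $\g_0$-component of the $v=e_+$ equation to get the first identity in~\eqref{algtheoequ}, using that $Xx$ has a $V_-$-component $-(\pi_-(X)^\top x)e_-$ and that $X e_+=\pi_-(X)$; (iv) read off the $\g_0$-component of the $v=x$ equation to get $[\h_0,\pi_0(S(e_+))]=0$, i.e.\ $\pi_0(S(e_+))$ centralises $\h_0$ in $\so(n)$.

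For the case $\pi_\R(\h)\ne0$: here I would argue, as in the proof of Proposition~\ref{centprop}, that choosing $X\in\h$ with $\pi_\R(X)\ne0$ and prescribed $\pi_-(X)$ is possible, and then the $v=e_-$ and $v=e_+$ tests together with the grading force all graded pieces of $S$ to vanish. Concretely $S(e_-)$ must be an $\h$-fixed vector in $\g_-$ on which some element acts by a nonzero scalar (via $\pi_\R$), hence $S(e_-)=0$; propagating this through the $v=e_+$ and $v=x$ equations, combined with $\pi_-(\h)=\g_-$ and $\pi_0(\h)\cdot V_0=V_0$ when $\pi_\R\ne0$, forces $S\equiv0$, so $W=\{0\}$.

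For the case $\h\subseteq\so(n)\ltimes\R^n$ (types 2 and 4), after establishing $W\subseteq(V^0\otimes\g_-)\+(V^+\otimes\p)$ and~\eqref{algtheoequ}, the remaining claim is about how $\h_0$ acts on $\pi_-(S(e_+))\in V_0$. I would extract this from the $\g_-$-component of the $v=e_+$ equation: $[X,S(e_+)]-S(Xe_+)=0$ projected to $\g_-$ reads $\pi_0(X)\cdot\pi_-(S(e_+)) + (\text{terms from }S_0(e_+)\text{ and }S(\pi_-(X)))=0$, and one plugs in the already-derived value $S(x)e_+=-\pi_0(S(e_+))\cdot x$ together with $\h_0$ centralising $\pi_0(S(e_+))$. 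In type~2, $X$ ranges over all of $\g_-\subset\h$ (so $\pi_0(X)=0$, $\pi_-(X)$ arbitrary) and over elements with arbitrary $\h_0$-part, and the resulting relation says exactly that $\h_0\cdot\pi_-(S(e_+))=0$. In type~4, the $\h_0$-part of $\h$ is $\s$ plus $\mathrm{graph}(\psi)$ with $\psi\colon\z\to V_1$, and $\h_0$ acts trivially on $V_1$; tracking which elements $X\in\h$ are available — those in $\psi^{-1}(V_1)$ act on $V_0$ only through their $\s$-component and annihilate $V_1$ — yields that $\psi^{-1}(V_1)\+\s$ annihilates $\pi_-(S(e_+))$.

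The main obstacle I anticipate is the bookkeeping in step~(iii)–(iv): the equation $X\cdot S(v)=[X,S(v)]-S(Xv)=0$ mixes the $\g$-grading of $S(v)$ with the $V$-grading of the argument $Xv$, and the off-diagonal term $-(\pi_-(X)^\top x)e_-$ in $Xx$ couples the $V_0$ and $V_-$ inputs of $S$. One must carefully track, for each fixed graded output slot, which inputs contribute, and repeatedly use the surjectivity $\pi_-(\h)=\g_-$ to turn "holds for all $X\in\h$" into "holds for all $x\in V_0$". A secondary subtlety is that in type~4 one is not asserting $\h_0$ kills all of $\pi_-(S(e_+))$ — only the subalgebra $\psi^{-1}(V_1)\+\s$ does — so the argument must identify precisely the relevant subalgebra and not over-claim; this is where the structure of type~4 from Theorem~\ref{bb-ike-theo} (the splitting $V_0=V_1\+V_2$ with $\h_0(V_1)=0$ and $\h=(\mathrm{graph}(\psi)\+\s)\ltimes V_2$) must be used with care.
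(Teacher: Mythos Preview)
Your plan is correct and matches the paper's own proof: both decompose $S$ and the triviality equation $[X,S(v)]=S(Xv)$ along the grading $\g=\g_-\+\g_0\+\g_+$, test on $v\in\{e_-,x,e_+\}$, handle types~1/3 and~2/4 separately, and use $\pi_-(\h)=\g_-$ together with Proposition~\ref{centprop} to pin down $S(e_-)$ before propagating through the remaining components. Your labels in steps~(iii)--(iv) are slightly off---the first identity in~\eqref{algtheoequ} actually comes from the $\g_-$-component of the $v=e_+$ equation with $X\in\g_-\cap\h$, and the centraliser relation $[\h_0,\pi_0(S(e_+))]=0$ from the $\g_0$-component of the $v=e_+$ equation with $X\in\h_0$---but this is exactly the bookkeeping you anticipated and not a genuine gap.
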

\begin{proof}
Let $W$ be a trivial submodule of $V^*\otimes \g$ for $\h$. Any $S\in W$ satisfies the equation
\begin{equation}
\label{trivial}
[X,S(v)] =S(Xv),\quad\text{ for all $v\in V=\R^{1,n+1}$ and all $X\in \h$.}\end{equation}
For $S\in W$, using the notations introduced above, we write
\begin{equation}\label{Ssplit}
S=\overline{s}_- +S_0+\overline{s}_+\in V^*\otimes \left(\g_-\+\g_0\+\g_+\right),\qquad s_\pm\in V^*\otimes V_0.\end{equation}
We consider the case $\mathrm{pr}_\R(\h)\not=0$ first, i.e.~the cases when $\h$ is either of type~1 or of type~3 in Proposition~\ref{bb-ike-theo}. In both cases $\g_-\subsetneq\h$, so that equation~(\ref{trivial}) implies that $[\g_-,S(e_-)]=0$, which yields $S(e_-)\in \g_-$. As a consequence,
$[\g_-,S(v_0)] \in \g_-$ for all $v_0\in V_0$, so that 
\[S(v_0)\in \p\quad\text{  for all $v_0\in V_0$.}\]
For {\bf type~1}, the grading element $E$ is in $\h$, so equation~(\ref{trivial}) for $X=E$ yields also that
\[S(e_+)\in \g_+,\qquad S(v)\in \g_0,\text{ for all} v\in V_0.\]
Since $n\ge 2$, for each $v\in V_0$ and $w\in v^\perp\subset V_0$ so that $X=\overline{w}\in \g_-\subset \h$  satisfies  $X(v)=0$ so that $0=[X,S(v)]=-S(v)\cdot w$. Hence, $S(v)\in \g_0\subset \so(n)$ acts trivially on the subspace $v^\perp$ and therefore on all of $V_0$, so that $S(v)=0$ for all $v\in V_0$. For all $X=\overline{v}\in \g_-$ this  implies 
\[0=S(Xe_+)=[X,S(e_+) ]=v^\top S(e_+), \qquad
0=[X,S(v)]=S(Xv)=v^\top vS(e_-),
\]
so that $S(e_+)=S(e_+)=0$. 

Now we consider {\bf type~3}, where there is a surjective linear map $\vf:\h_0\to \R$ such that $\h=\mathrm{graph}(\vf)\ltimes \g_-$. Let $X=\vf(A)E+A \in \h$ with $\vf(A)\not=0$. For $v=e_-$,  equation~(\ref{trivial}) gives
\[A \cdot s_-(e_-)=0,\]
whereas for 
$v=e_+$
it yields
\[ A\cdot s_-(e_+) =\vf(A) s_-(e_+),\quad  [A,S_0(e_+)]=\vf(A)S_0(e_+),\quad A\cdot s_+(e_+)=0.\]
Since $A\in \so(n)$, it has imaginary eigenvalues when acting on $V_0$ and as $[A,-]$ on $\co(n)$, so with  $\vf(A)\not=0$ the first two equations imply $ s_-(e_+) =0$ and $S_0(e_+)=0$.
Similarly,
for
$v_0\in V_0$ we get
\[ A\cdot s_-(v_0) =\vf(A) s_-(v_0),\quad  [A,S_0(v_0)]=S(Av_0).\]
where the  first equation implies $s_-(v_0)=0$.
So  we have that
\[S(e_-)=0,\qquad S(v_0)\in \g_0,\qquad S(e_+)\in \g_+,\]
for all $v_0\in V_0$.  With this, equation~(\ref{trivial}) yields that
$[\g_+,S(v_0)]=0$, so that $S(v_0)=0$,  and as a consequence  for all $X=\overline{w}\in \g_-$ that
\[S_0(e_+)\cdot w=-[X,S(e_+)]=-S(w)=0,\]
so that $S=0$.

Now we consider the case that 
$\pi_\R(\h)=\{0\}$, i.e.~that $\h\in \so(n)\ltimes \g_+$. Here we cannot use the grading element, but we have instead that $\h\cdot \e_-=\{0\}$. Hence, $S(e_-)$ commutes with $\h$ and therefore, by Proposition~\ref{centprop}, 
\[S(e_-)\in \{\overline{z}\in\g_-\mid \pi_0(\h)\cdot z=0\}\subseteq \g_-.\] 
Before we continue, we write equation~\ref{trivial} in its $\g_\pm$ and $\g_0$ components based on the Lie bracket in $\g$. For this we  split $X=X_0+\overline{x}\in\h\subseteq \h_0\ltimes V_0$ with $x\in V_0$, and $S$ as in ~(\ref{Ssplit}).
We already have that $s_+(e_-)=S_0(e_-)=0$.
Equation~(\ref{trivial}) is equivalent to the equations
\begin{eqnarray}
\label{trivial0}
0&=& [X_0,S_0(v)] + x s_+(v)^\top -s_+(v)x^\top +x^\top s_+(v)\Id - S_0(Xv),
\\
\label{trivial+}
0&=&-X_0\cdot s_+(v)-s_+(Xv),
\\
\label{trivial-}
0&=&X_0\cdot s_-(v) -S_0(v)\cdot x -s_-(Xv).
\end{eqnarray}
Now we consider these equations for $X=\overline{x}\in \g_-\cap \h$. Here, since  $Xe_i\in \R\cdot e_-$, so that $S_0(Xv)=0$, we get a pair of equations when $v=e_i$,
\begin{eqnarray}
\label{trivial0i}
0&=&  x s_+(e_i)^\top  -s_+(e_i)x^\top+x^\top s_+(e_i)\Id,
\\
\label{trivial-i}
0&=& -S_0(e_i)\cdot x +x^\top e_i s_-(e_-).
\end{eqnarray}
When $v=e_+$, noting that $Xe_+=x$, we get
\begin{eqnarray}
\label{trivial0+}
0&=& x s_+(e_+)^\top -s_+(e_i)x^\top+x^\top s_+(e_+)\Id - S_0(x),
\\
\label{trivial++}
0&=&s_+(x),
\\
\label{trivial-+}
0&=&-S_0(e_+)\cdot x -s_-(x).
\end{eqnarray}
Note that multiplying (\ref{trivial-i}) by $x^\top$ implies that 
\begin{equation}
\label{trivial-i2}
0=\pi_\R(S_0(e_i)) x^\top x - (x^\top e_i)(x^\top s_-(e_-)),\quad \text{ for all }x\in \h\cap \g_-.
\end{equation}
Since in both cases $\dim(\h\cap \g_-)\ge 2$,  for each $i=1, \ldots, n$ we can  find an $x\in \h\cap \g_-$ that is orthogonal to $e_i$. This implies that $S_0(e_i)\in \so(n)$ and as a consequence
\begin{equation}
\label{trivial-i2a}
0= (x^\top e_i)(x^\top s_-(e_-)),\quad \text{ for all }x\in \h\cap \g_-.
\end{equation}

From now on we consider the two remaining types separately. We start with {\bf type~2}, for which we have that $V_0\simeq \g_-\subset \h$, so we can choose $x\in V_0$ arbitrary. Then~(\ref{trivial++}) implies that $S(e_i)\in \p$ for all $i=1, \ldots ,n $ and ~(\ref{trivial-i2a}) implies that $S(e_-)=0$.
%
%
Then equation~(\ref{trivial-i}) also implies that $S_0(e_i)=0$, so that
$S(e_i)\in \g_-$  for all $i=1, \ldots, n$.
From this, equation~(\ref{trivial0+}) yields that 
 $S(e_+)\in \p$.
Then~(\ref{trivial-+}) implies that  $S_0(e_+)\cdot x+s_-(x)=0$, which is equivalent to the first equation in~(\ref{algtheoequ}).
 Finally, equation~(\ref{trivial0}) for 
 $X\in \h_0$ and $v=e_+$  yields the second equation in~(\ref{algtheoequ}), whereas~(\ref{trivial-}) shows that $\h_0$ acts trivially in $S_-(e_+)$. This finishes the proof for type~2.

Now we prove the theorem for an algebra $\h$ of {\bf type 4}, where we have that $\h\cap \g_-=V_2$.
Here we get equations~(\ref{trivial0i}--\ref{trivial-i2a}) only for $x\in V_2$. First we show that $S(e_-)=s_-(e_-)=0$. 
From~(\ref{trivial-i2a}) we have that $s_-(e_-)\in V_1$. 
Now we consider equation~(\ref{trivial-}) for  $X=X_0+y\in \mathrm{graph}(\psi)$, i.e. $X_0\in \z$ and $y=\psi(X_0)\in V_1$, and $v\in V_0$. Since $\h_0$ acts trivially on $V_1$, we have, $Xv=-y^\top v$ and equation~(\ref{trivial-}) becomes
\begin{equation}\label{trivial-a}
0=\underbrace{X_0\cdot s_-(v)}_{\in V_2} -S_0(v)\cdot y + (y^\top v) s_-(e_-).
\end{equation}
Since $\psi$ is surjective onto $V_1$ and $s_-(e_-)\in V_1$, we can choose $y=s_-(e_-)$, multiply this equation with $y$, and use $S_0(v)\in \so(n)$ to get
$(y^\top v)(y^\top y)=0$ for any $v\in V_0$. This implies $y=s_-(e_-)=0$ and so
$S(\e_-)=0$.
With this,~(\ref{trivial-i}) implies that $S_0(e_i)|_{V_2}=0$, so that $S_0(e_i)\in \so(V_1)$, but equation~(\ref{trivial-a}) then also shows that $S_0(v)=0$, for all $v\in V_0$.
Equations~(\ref{trivial0i}) and~(\ref{trivial++})  imply that
\[s_+(e_i)\in V_1,\text{ for all $i=1, \ldots, n$, and }\ s_+|_{V_2}=0.  \]
Equation~(\ref{trivial0}) for  $X=X_0+y\in \mathrm{graph}(\psi)$ together with $S_0(e_i)\in \so(n)$ then implies that $s_+(e_i)=0$ for all $i$, so that we have
$S(e_i)\in \g_-$. 
Equation~(\ref{trivial0+}), then implies that $s_+(e_+)=0$, so that
$S(e_+)\in \p$.

Finally, equation~(\ref{trivial-+}) is the first equation in~(\ref{algtheoequ}) and~(\ref{trivial0}) the second, and~(\ref{trivial-}) shows that $\psi^{-1}(V_1)\+\s$ acts trivially on $\pi_-(S(e_+))$. This completese the proof for type~4, and hence the proof of the theorem.
\end{proof}
Writing out the statement of Theorem~\ref{algtheo} more explicitly, we have that any $S\in V^*\otimes \g$ such that $\h\cdot S =0 $ is of the form
\begin{equation}\label{Sexplicit}
S(e_+)=\begin{pmatrix}
a &  -v^\top & 0 \\
0 &A & v
\\0&0&-a
\end{pmatrix},
\quad
S(x)
=\begin{pmatrix}
0 &  x^\top(a-A) & 0 \\
0 &0 & -(A+a)x
\\0&0&-a
\end{pmatrix},
\text{ for  }x\in V_0,\end{equation}
and $ S(e_-)=0$.
We  remark that 
 the condition $n\ge 2$ is sharp, as we will see in Section~\ref{dim3sec}.

\subsection{Trivial submodules in the torsion module}
From the result in the previous section we can easily determine the trivial submodules in the torsion module $\Lambda^2V^*\otimes V$ using the $\g$-equivariant isomorphism
between 
\begin{equation}\label{iso}V^*\otimes \g  \simeq \Lambda^2V^*\otimes V,\end{equation}
that assigns to $S\in V^*\otimes \g$ its skew symmetrisation in the first two components,
\[ T(v,w):=\tfrac{1}{2}\left( S(v)w-S(w)v\right).\]
From Theorem~\ref{algtheo} we obtain the following consequence.
\begin{corollary}\label{Talgcor}

Let $\h$ be an indecomposable subalgebra of $\p=(\R\+\so(n))\ltimes \R^n$ with $n\ge 2$,  and let $W$ be a trivial submodule of $\h$ in $V^*\otimes \g$.
If $\mathrm{pr}_\R(\h)\not=0$, then $W=\{0\}$. Otherwise, i.e.~if $\h\subset \so(n)\ltimes \R^n$, it holds that
\[W\subseteq  \left(( V\wedge V^+) \+ (V^0\wedge V^0 )\right) \otimes V_-\  \+\ (V^0\wedge V^+) \otimes V_0.\] Moreover, if for  $T\in W$, 
we define $b\in \R$ and $\omega\in \Lambda^2V_0$ by
\[ T(e_+,\e_-)=b\,e_-,\qquad \omega(x,y):=\< T(x,y),\e_+\>,\]
then 
\begin{equation}\label{Talgcorequ}
\< T(e_+,x),y\>=b\, x^\top y +\omega (x,y).\end{equation}
\end{corollary}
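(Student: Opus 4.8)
\textbf{Proof plan for Corollary~\ref{Talgcor}.}
The plan is to transport the information in Theorem~\ref{algtheo} through the $\g$-equivariant isomorphism~(\ref{iso}). First I would observe that if $W\subseteq V^*\otimes\g$ is a trivial submodule, then its image under $S\mapsto T$, with $T(v,w)=\tfrac12(S(v)w-S(w)v)$, is a trivial submodule of $\Lambda^2V^*\otimes V$, and conversely; so the description of $W$ amounts to computing $T$ for $S$ of the explicit form~(\ref{Sexplicit}). When $\mathrm{pr}_\R(\h)\ne 0$, Theorem~\ref{algtheo} gives $W=\{0\}$, so there is nothing to do. In the remaining case I would plug $S(e_-)=0$, $S(e_+)$ and $S(x)$ from~(\ref{Sexplicit}) into the skew-symmetrisation formula and read off $T(e_A,e_B)$ for all pairs of basis vectors.

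Concretely, the key computations are: $T(e_+,e_-)=\tfrac12(S(e_+)e_--S(e_-)e_+)=\tfrac12 S(e_+)e_-$, and since $S(e_+)$ from~(\ref{Sexplicit}) sends $e_-$ to $a\,e_-$ (reading off the bottom-right entry relative to the Witt basis, i.e. $S(e_+)e_-\in V_-$), this lands in $V_-=\R\cdot e_-$; similarly $T(e_+,x)=\tfrac12(S(e_+)x-S(x)e_+)$ and $T(x,y)=\tfrac12(S(x)y-S(y)x)$. Using that $S(x)$ annihilates $V_0$ modulo $V_-$ (its only nonzero action on $V_0$, per~(\ref{Sexplicit}), is the top row landing in $V^-$, i.e. $S(x)y\in\R\,e_-$ for $y\in V_0$) and $S(x)e_+=-(A+a)x\in V_0$, one gets $T(x,y)\in V_-$ for $x,y\in V_0$, $T(e_+,x)\in V_0\oplus V_-$ for $x\in V_0$, $T(e_+,e_-)\in V_-$, while $T(e_-,x)=\tfrac12(S(e_-)x-S(x)e_-)=0$ since $S(e_-)=0$ and $S(x)e_-=0$. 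Assembling these containments gives exactly $W\subseteq\big((V\wedge V^+)\oplus(V^0\wedge V^0)\big)\otimes V_- \oplus (V^0\wedge V^+)\otimes V_0$.

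For the final identity~(\ref{Talgcorequ}) I would make the constants explicit: from $T(e_+,e_-)=\tfrac12 S(e_+)e_- = \tfrac12 a\,e_-$ we read $b=\tfrac12 a$ (or $b=a$ up to the factor — I would fix the normalisation by the displayed convention $T(e_+,e_-)=b\,e_-$). Next, $T(e_+,x)=\tfrac12(S(e_+)x-S(x)e_+)$; the $V_0$-component of $S(e_+)x$ is $Ax$ while $S(x)e_+=-(A+a)x$, so the $V_0$-part of $T(e_+,x)$ is $\tfrac12(Ax+(A+a)x)=Ax+\tfrac{a}{2}x$, and pairing with $y\in V_0$ and using $\<.,.\>|_{V_0}=\delta_{ij}$ together with $A\in\so(n)$ (so $\langle Ax,y\rangle = -\langle x, Ay\rangle$ — this is just bookkeeping, and the sign will be absorbed into the definition of $\omega$) gives $\<T(e_+,x),y\>=\tfrac{a}{2}x^\top y + \omega(x,y)$ where $\omega(x,y)=\langle T(x,y),e_+\rangle$ matches the skew part coming from $A$. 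Identifying $\tfrac{a}{2}=b$ yields~(\ref{Talgcorequ}).

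The computation is entirely routine linear algebra in the Witt basis; the only thing requiring care — and the step I would flag as the main potential pitfall — is bookkeeping the identifications $\g_-\simeq V_0$, $\g_0\simeq\co(n)$ and the various dual spaces $V^\pm,V^0$ consistently with the matrix form~(\ref{Sexplicit}), so that the "$\omega$" produced by skew-symmetrising the $\so(n)$-part $A$ of $S(e_+)$ is correctly matched with $\omega(x,y)=\<T(x,y),e_+\>$ and the scalar $a$ is correctly matched with $b$ (including factors of $\tfrac12$). Once those normalisations are pinned down, both the module containment and the identity~(\ref{Talgcorequ}) follow immediately.
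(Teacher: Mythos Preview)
Your proposal is correct and follows essentially the same route as the paper's proof: both transport Theorem~\ref{algtheo} through the equivariant isomorphism~(\ref{iso}), evaluate the skew-symmetrisation of the explicit $S$ in~(\ref{Sexplicit}) on the Witt basis pairs, and then match $b=\tfrac{a}{2}$ and $\omega(x,y)=-x^\top A y$ to obtain~(\ref{Talgcorequ}). One small slip: $S(e_+)e_-=a\,e_-$ comes from the \emph{top-left} entry of the matrix in~(\ref{Sexplicit}), not the bottom-right, but this does not affect your argument.
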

\begin{proof}
Since the isomorphism~(\ref{iso}) is $\g$-equivariant, every $T\in W$ is the skew symmetrisation of an $S\in V^*\otimes \g$ such that $\h\cdot S=0$. By Theorem~\ref{algtheo}, if $\mathrm{pr}_\R(\h)\not=0$, then $S=0$ and otherwise $S$ is in $(V^0\otimes \g_-)\+(V^+\otimes \p)$  and of the form~(\ref{Sexplicit}), with the parameters $a\in \R$, $A\in so(n)$ and $v\in V_0$. For the skew symmetrisation this yields
\[T(e_+,e_-) =\tfrac{1}{2}S(e_+)e_-=\tfrac{a}{2} e_-,\]
and
\[\< T(x,y),e_+\> = \tfrac{1}{2}\<  S(x)y-S(y)x,\e_+\> = \tfrac{1}{2}\left( x^\top(a-A)y - y^\top(a-A)x\right) =-x^\top A  y,\]
as well as
\[T(e_+,x)= \tfrac{1}{2}\left( Ax -v^\top x e_- +(A+a)x\right)=Ax+\tfrac{1}{2}(ax  -v^\top x e_-). \]
This shows $\< T(e_+,x),y\> = (Ax)^\top y + \tfrac{a }{2}x^\top y $, which proves the statement.
\end{proof}
\begin{remark}
In regards to the splitting of the torsion modules into irreducible $\g$-modules with vectorial, twistorial or totally skew torsion tensors, as explained in the introduction, one easily obtains the following direct characterisation in terms of $S$. If $n\ge 2$ and $T$ is the skew symmetrisation of $S$ with  $\h\cdot T=0$, then 
\begin{enumerate}
\item $T$ is totally skew $\iff$ $S(e_+)\in \so(n)$  $\iff$ $T(e_+)\in \so(n)$ 
\item $T$ is  twistorial $\iff$ $S(e_+)\in \g_-$,  $\iff$ $T(e_+)\in V^0\otimes V_-$, and
\item $T$ is vectorial $\iff$ $S(e_+)\in \R$ $\iff$ $T(e_+)\in V^-\otimes V_-$. 
\end{enumerate}
The details can be found in \cite{steven-thesis}.
\end{remark}

\subsection{Algebraic curvature tensors}
\label{curvsec}
Motivated by the results in later sections, here we will study the algebraic curvature tensors that come from connections with torsion. We will follow the approach in \cite{galaev03} but we will allow for a very special torsion.

Let $(V,\<.,.\>)$ be a semi-Euclidean vector space,  $T\in \Lambda^2V^* \otimes V$, and let $\h$ be a subalgebra of $\so(V)$. In this setting we define
\[
\mathcal R(V,\h,T):= \{ R\in \Lambda^2V^*\otimes \h\mid \underset{u,v,w}{\mathfrak{S}}    \left( R(u,v)w +T(T(u,v),w)\right)=0\text{ for all }u,v,w\in V\},\]
where $\underset{u,v,w}{\mathfrak{S}}   $ denotes the sum of the cyclic permutations of $u,v,w$, e.g.\[\underset{u,v,w}{\mathfrak{S}} R(u,v)w= R(u,v)w+ R(v,w)u+ R(w,u)v.\]
If $T=0$, we denote $\mathcal R(V,\h,T)$ by $\mathcal R(V,\h)$.
Since $ \mathcal R (V,\h,T)\subset \Lambda^2V^*\otimes \so(V)$, every $R\in  \mathcal R (V,\h,T)$ satisfies
\[\< R(u,v)w,z\>+\<R(u,v)z,w\>=0, \] and 
consequently
satisfies the following pairwise symmetry, 
\begin{eqnarray} 
\lefteqn{2(\< R(u,v)w,z\>-\< R(w,z)u,v\>)=\nonumber}
\\
&=&\label{pair}
\< \underset{u,v,w}{\mathfrak{S}}   T(T(u,v),w),z\>+\< \underset{w,z,v}{\mathfrak{S}} T(T(w,z),v),u\> 
\\
&&\nonumber
+\< \underset{z,w,u}{\mathfrak{S}} T(T(z,w),u),v\>+\< \underset{v,u,z}{\mathfrak{S}} T(T(v,u),z),w\>.
\end{eqnarray}

As in the previous subsections, let 
$V=\R^{1,n+1}$
and in the following $\h$ is
an indecomposable subalgebra  of $\so(n)\ltimes \g_-\subset \p$, i.e.~ a subalgebra of $\p$ with $\pi_\R(\h)=\{0\}$. We continue to use the same notation as in the previous subsections, $V=V_-\+V_0\+V_+$, etc. In this setting we prove the following.
\begin{theorem}\label{curvtheo}
Let  $\h$ an indecomposable subalgebra of $\so(n)\ltimes \g_-$, with $n\ge 1$, and $\h_0=\pi_{\so(n)}(\h)$. If $T\in \Lambda^2 V^*\otimes V$ 
satisfies
\begin{equation}
\label{Tcond}
T\ \in \left( ( V\wedge V^+) \+ \Lambda^2 V^0\right) \otimes V_-\  \+\ (V^0\wedge V^+) \otimes V_0,\end{equation}
then  $\mathcal R(V,\h,T)$ injects into 
\[
\mathcal R(V_0,\h_0)\+
\mathcal P(V_0,\h_0) \+ 
V_0\otimes V_0,\]
where \[ \mathcal P(V_0, \h_0):=\{P\in V^0\otimes \h_0\mid  \underset{x,y,z}{\mathfrak{S}}     \left(P(x,y)\right)^\top z=0\text{ for all }x,y,z\in V_0
\}.\] 
\end{theorem}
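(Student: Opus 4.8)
The plan is to write down an explicit \emph{Witt-basis decomposition} of an arbitrary $R\in\mathcal R(V,\h,T)$ and read off its components, much as was done for the torsion module in Corollary~\ref{Talgcor}. First I would use that $\h\subseteq\so(n)\ltimes\g_-\subseteq\p$ stabilises the null line $V_-=\R e_-$ and acts trivially on $e_-$, so that every $R(u,v)\in\h$ annihilates $e_-$ and maps $V_0$ into $V_0\+V_-$ and $e_+$ into $V_0\+V_-$. Combined with the $\so(TM,g)$-skew-symmetry $\<R(u,v)w,z\>=-\<R(u,v)z,w\>$, this means that the ``block form'' of $R(u,v)$ is determined by the two pieces $\pi_0(R(u,v))\in\so(n)$ and $\pi_-(R(u,v))\in V_0$, i.e.\ by $R(u,v)e_+$ modulo $V_-$. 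So as a first reduction I would record that an element of $\mathcal R(V,\h,T)$ is completely encoded by the collection of endomorphisms $R(x,y)$, $R(e_+,x)$, $R(e_+,e_-)$, $R(x,e_-)$ for $x,y\in V_0$, and that each of these lies in $\h\subseteq\so(n)\ltimes\g_-$.

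Next I would exploit the hypothesis \eqref{Tcond} on $T$. The point of that hypothesis is exactly that $T(T(u,v),w)$ is a very constrained tensor: since the $V_0$-valued part of $T$ lives in $(V^0\wedge V^+)\otimes V_0$ and the $V_-$-valued part kills nothing dangerous, one checks that $T(T(u,v),w)$ always lands in $V_-$, and in fact $T\circ T$ only ``sees'' the $e_+$-slots of its arguments. Feeding this into the first Bianchi identity with torsion, $\underset{u,v,w}{\mathfrak S}\big(R(u,v)w+T(T(u,v),w)\big)=0$, I would specialise $(u,v,w)$ to the cases $(x,y,z)\in V_0^3$, $(e_+,x,y)$, $(e_-,x,y)$, $(e_-,e_+,x)$. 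In the all-$V_0$ case the torsion term drops out entirely, giving that $R(x,y)\mapsto \pi_0(R(x,y))$ defines a map into $\mathcal R(V_0,\h_0)$; similarly the $V_0$-projection of $R(e_+,x)$ will, via the $(e_+,x,y)$-Bianchi identity and the pairwise-symmetry relation \eqref{pair}, be forced to satisfy the cyclic condition defining $\mathcal P(V_0,\h_0)$, and the $V_-$-valued remainder $\<R(e_+,x),e_+\>$ pairs up to give the $V_0\otimes V_0$ factor. The components $R(x,e_-)$ and $R(e_+,e_-)$ I expect to be forced to vanish (or to be determined by the others) because $\h$ acts trivially on $e_-$ and because of the curvature symmetries; this is where one must be a little careful, but it is the same mechanism as in the proof of Proposition~\ref{centprop}.

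Having set up these three component maps, the final step is to check \emph{injectivity} of the resulting linear map $\mathcal R(V,\h,T)\to\mathcal R(V_0,\h_0)\+\mathcal P(V_0,\h_0)\+V_0\otimes V_0$: that is, if all three components vanish then $R=0$. Here one uses that $R(u,v)$ is $\h$-valued hence (by the block structure above) determined by $\pi_0(R(u,v))$ and $\pi_-(R(u,v))$; vanishing of the $\mathcal R(V_0,\h_0)$-component kills the $\pi_0$-parts of $R(x,y)$, vanishing of $\mathcal P(V_0,\h_0)$ and $V_0\otimes V_0$ kills $R(e_+,\cdot)$, and what remains — essentially $\pi_-(R(x,y))$ — is recovered from a Bianchi identity together with the already-vanishing data, so it must vanish too. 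The main obstacle I anticipate is precisely the bookkeeping in this last recovery step and in the $(e_+,x,y)$-Bianchi identity: one has to track the $V_-$-valued tails carefully, keep the torsion term $T(T(\cdot,\cdot),\cdot)$ under control using \eqref{Tcond}, and invoke the pairwise symmetry \eqref{pair} at the right moment to separate the symmetric ($V_0\otimes V_0$) part from the $\mathcal P$-part. None of the individual computations is deep, but getting the identification of the three summands to come out cleanly — rather than as some extension — is the delicate point, and is the reason the statement is phrased as an injection rather than an isomorphism.
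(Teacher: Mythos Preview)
Your proposal is correct and follows essentially the same approach as the paper: decompose $R$ via the Witt basis, use \eqref{Tcond} to kill the torsion contributions in the Bianchi identity for the relevant slot-specialisations, show $e_-\hook R=0$, and extract the three components $\pi_0(R(x,y))\in\mathcal R(V_0,\h_0)$, $\pi_0(R(e_+,x))\in\mathcal P(V_0,\h_0)$, and $\langle R(e_+,x)e_+,y\rangle\in V_0\otimes V_0$, with injectivity following from the pairwise symmetry $\langle R(x,y)e_+,z\rangle=\langle R(e_+,z)x,y\rangle$. The only minor comment is that the vanishing of $R(e_-,\cdot)$ does not use the centraliser mechanism of Proposition~\ref{centprop} but rather the pairwise-symmetry relation \eqref{pair} together with \eqref{Tcond}; the paper organises this through an intermediate space $\mathcal Q(V_0,\h,\omega_T)$ (Lemmas~\ref{symlemma}--\ref{lemma2}), but the content is the same as what you outline.
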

It is remarkable that none of the spaces in the theorem depend on $T$ even though $\mathcal R(V,\h,T)$ does. The proof will proceed by several auxiliary statements that are useful in their own right.
\begin{proof}
The assumption (\ref{Tcond})
 implies that 
\begin{equation}\label{e-T}
T(T(u,v),\e_-)=0,\qquad \<T(u,v),e_-\>=0,\quad \text{ for all }u,v\in V,
\end{equation}
and 
\begin{equation}\label{e-T2}
T(T(e_-,u),x)=0,\qquad \<T(T(e_-,u),v),x\>=0,\quad \text{ for all }u,v\in V, x\in V_0
\end{equation}

as well as 
\begin{equation}\label{eiT}
T(T(x,y),z)= 0\qquad\< T(x,y),z\>=0,\quad \text{ for all }x,y,z\in V_0.
\end{equation}

From this we deduce the following.
\begin{lemma}\label{symlemma}
Let $T$ be as in (\ref{Tcond}), $\h$ an indecoposable subalgebra of $\so(n)\ltimes \g_-$, and $R\in\mathcal R(V,\h,T)$.
Then 
\begin{enumerate}
\item $\underset{x,y,z}{\mathfrak{S}} R(x,y)z=0$ for all $x,y,z\in V_0$,
\item $\< R(u,v)w,z\>-\< R(w,z)u,v\>=0$ if three of $u,v,w,z$ are in $V_-\+V_0$, and
\item  $R(e_-,v)=0$ for all $v\in V$.
\end{enumerate}
\end{lemma}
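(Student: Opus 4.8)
The plan is to exploit the three degeneracy identities \eqref{e-T}, \eqref{e-T2}, \eqref{eiT} that follow from the special form \eqref{Tcond} of $T$, feeding them into the first Bianchi identity built into the definition of $\mathcal R(V,\h,T)$ and into the pairwise symmetry \eqref{pair}. For part (1), I would write out the cyclic Bianchi identity $\underset{x,y,z}{\mathfrak{S}}\,(R(x,y)z+T(T(x,y),z))=0$ for $x,y,z\in V_0$; by \eqref{eiT} the torsion term $T(T(x,y),z)$ vanishes for all $x,y,z\in V_0$, so $\underset{x,y,z}{\mathfrak{S}} R(x,y)z=0$ immediately. For part (2), I would apply the pairwise symmetry formula \eqref{pair}: each of the four cyclic sums appearing on the right-hand side is a sum of terms of the shape $\langle T(T(a,b),c),d\rangle$, and I want to show they all vanish when three of the four vectors $u,v,w,z$ lie in $V_-\+V_0$. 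The key point is that by \eqref{Tcond} the image of $T$ lies in $V_-\oplus V_0$ with the $V_0$-component supported only on arguments involving $e_+$; combined with \eqref{e-T} (which kills any $T(T(u,v),\cdot)$ paired against something producing an $e_-$, and more importantly kills $T(T(u,v),e_-)$) and \eqref{eiT} (which kills the fully-$V_0$ terms), one checks case by case on which of $u,v,w,z$ equals the ``exceptional'' direction that every surviving term is zero. So \eqref{pair} collapses to $\langle R(u,v)w,z\rangle=\langle R(w,z)u,v\rangle$ under the stated hypothesis.

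For part (3), I would use that $\h\subseteq\so(n)\ltimes\g_-$ acts trivially on $e_-$, so in particular $R(e_-,v)\in\h$ always, but more is true: I first show $\langle R(e_-,v)w,z\rangle=0$ whenever enough arguments lie in $V_-\+V_0$, using part (2) to move $e_-$ into the second slot-pair and then invoking \eqref{e-T2} together with \eqref{e-T}; since $R$ takes values in $\so(V)$ and $R(e_-,v)\in\h\subseteq\p$ already annihilates $e_-$ and maps $V_0$ into $\g_-(V_0)\subseteq V_-$, knowing its pairing against $V_-\+V_0$ in enough slots forces $R(e_-,v)=0$ as an endomorphism. Concretely: $R(e_-,v)\in\h$ means it is determined by $\pi_{\so(n)}$ of it acting on $V_0$ and by $\pi_-$ of it; pairing $R(e_-,v)x$ with $y\in V_0$ recovers the $\so(n)$-part, and this pairing vanishes by part~(2) plus \eqref{e-T2}; pairing $R(e_-,v)e_+$ with $x\in V_0$ recovers the $\g_-$-part, and one handles this using the Bianchi identity $\underset{e_-,v,e_+}{\mathfrak{S}}(R(\cdot,\cdot)\cdot+T(T(\cdot,\cdot),\cdot))=0$ together with \eqref{e-T}, \eqref{e-T2}.

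I expect the main obstacle to be part (3), and within it the careful bookkeeping needed to conclude that $R(e_-,v)$ vanishes as an \emph{endomorphism} of $V$, not merely that certain components vanish: one must use the algebraic structure of $\h\subseteq\so(n)\ltimes\g_-$ (an element of which is fully determined by its $\so(n)$-block on $V_0$ and its $\g_-$-vector, with no $\R E$-part) to leverage the partial vanishing into full vanishing, and this is where the hypothesis $\pi_\R(\h)=\{0\}$ is essential. Parts (1) and (2) should be short: (1) is a one-line consequence of \eqref{eiT}, and (2) is a finite, if slightly tedious, case-check on the four cyclic-sum terms in \eqref{pair}, organised by which argument carries the ``$e_+$'' or ``$e_-$'' label that the degeneracy relations are sensitive to.
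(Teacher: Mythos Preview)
Your plan for parts (1) and (2) is correct and is exactly what the paper does: part (1) is immediate from the defining Bianchi identity of $\mathcal R(V,\h,T)$ together with \eqref{eiT}, and part (2) follows from the pairwise-symmetry formula \eqref{pair} once you check, case by case, that the four cyclic torsion sums on the right vanish when three of $u,v,w,z$ lie in $V_-\oplus V_0$.

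For part (3) your strategy is also sound and close to the paper's, but be careful with one detail. You propose to kill the $\g_-$-part of $R(e_-,v)$ via the Bianchi identity $\underset{e_-,v,e_+}{\mathfrak S}(\cdots)=0$. This works fine for $v\in V_0$ (it yields $R(e_-,v)e_+=R(e_-,e_+)v$, and the right-hand side lies in the already-vanishing $\so(n)$-part), but for $v=e_+$ the cyclic identity over $e_-,e_+,e_+$ is vacuous, so it gives no information about $R(e_-,e_+)e_+$. The paper closes this gap by applying \eqref{pair} directly with $(u,v,w,z)=(e_-,e_+,x,e_+)$ and checking via \eqref{e-T} and \eqref{e-T2} that the right-hand side vanishes, so that $\langle R(e_-,e_+)x,e_+\rangle=\langle R(x,e_+)e_-,e_+\rangle=0$. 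Alternatively, your Bianchi route can be salvaged by a short bootstrap: once you know $R(e_-,x)=0$ for all $x\in V_0$, the relation $R(e_-,e_+)x=R(e_-,x)e_+=0$ shows that $R(e_-,e_+)$ annihilates $V_-\oplus V_0$, and then skew-adjointness forces $R(e_-,e_+)=0$. Either way, flag this $v=e_+$ case explicitly; as written your plan glosses over it.
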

\begin{proof} 
For the first statement, consider equation (\ref{pair}) with $u,v,w\in V_-\+V_0$. With (\ref{Tcond}) and consequently (\ref{eiT}),the right-hand-side of  (\ref{pair}) vanishes.

With $\h\in \so(n)\ltimes \g_-$ which is the annihilator of $e_-$ in $\so(1,n+1)$ we clearly have $R(v,w)e_-=0$. With the first statement, we only have to check that $ 
\<R(e_-,e_+)x,e_+\>=0$ for all $x\in V_0$. However this holds because of   (\ref{pair}), (\ref{e-T}) and (\ref{e-T2}).
\end{proof}

With this observation and using the extra structure of $V=V_-\+V_0\+V_+$, every $R\in \mathcal R (V,\h,T)$ is an element in 
\begin{eqnarray*}
 \left(\Lambda^2V^0 \+ (V^0\wedge V^+)  \right) \otimes  \h
&=&
\Lambda^2V^0\otimes\h\ \+\  (V^+\wedge V^0)\otimes \h.
\end{eqnarray*}
We will now use the inclusion $\h\subseteq \h_0\ltimes \g_-$, with $\h_0=\pi_{\so(n)}(\h)$,  to describe the vector space $\mathcal R (V,\h,T)$. For this we consider the following $2$-form $\omega_T\in \Lambda^2 V^0$ defined by $T$,
\begin{equation}\label{omegaT}
\omega_T(x,y):= 
\Big\<e_+, T(T(x,y),e_+)+T(T(e_+,x),y)-T(T(e_+,y),x)\Big\>.\end{equation}
\begin{lemma}\label{lemma1}
Let $T$ be as in (\ref{Tcond}), $\omega_T$ defined by $T$ as in~(\ref{omegaT}), and $\h$ an indecomposable subalgebra of $\so(n)\ltimes \g_-$. Then $\mathcal R(V,\h,T)$ injects into 
\[\mathcal Q (V_0,\h,\omega_T)\+\mathcal R(V_0,\h_0),\]
where 
\[\mathcal Q (V_0,\h,\omega_T) = \left\{Q\in V^0\otimes \h\mid
\begin{array}{l}
\underset{x,y,z}{\mathfrak{S}}    \< Q(x,y),z\>=0,\\
\big\<e_+,  \underset{x,y}\Wedge Q(x,y)\big\>=\omega_T(x,y),\end{array}\text{ for all }x,y,z\in V_0\right\},
 \]
 where $\underset{x,y}\Wedge Q(x,y):= Q(x,y)- Q(y,x)$.
 Moreover, if $\h$ is of type 2, then this injection is an isomorphism.
 \end{lemma}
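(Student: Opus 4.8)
The idea is to decompose any $R\in\mathcal R(V,\h,T)$ according to the grading $V=V_-\+V_0\+V_+$ and extract from it two pieces: its ``$\Lambda^2V^0\otimes\h_0$-part'' and its ``$(V^+\wedge V^0)\otimes\h$-part'', which will land in $\mathcal R(V_0,\h_0)$ and $\mathcal Q(V_0,\h,\omega_T)$ respectively. By Lemma~\ref{symlemma}, $R$ already lives in $\big(\Lambda^2V^0\+(V^0\wedge V^+)\big)\otimes\h$ and $R(e_-,-)=0$, so the only components of $R$ that survive are $R(x,y)$ for $x,y\in V_0$ and $R(e_+,x)$ for $x\in V_0$. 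First I would define the candidate maps: send $R$ to $R_0:=\pi_{\so(n)}\circ R|_{\Lambda^2V_0}$, the $\so(n)$-part of $R$ restricted to pairs in $V_0$, and to $Q:=R(e_+,-)|_{V_0}\in V^0\otimes\h$. The injectivity claim then amounts to showing that $R$ is \emph{determined} by the pair $(R_0,Q)$; the content of the Bianchi identity for $(V,\h,T)$ is precisely that the $\g_-$-part of $R(x,y)$ on $V_0$ is forced by $Q$ and $T$, so no information is lost in passing to $(R_0,Q)$.

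\textbf{Key steps.} First, I would verify that $R_0\in\mathcal R(V_0,\h_0)$: the first Bianchi symmetry for $R_0$ is the $V_0$-component of statement~(1) of Lemma~\ref{symlemma} composed with $\pi_{\so(n)}$, using~(\ref{eiT}) to kill the $T$-terms, and $R_0$ takes values in $\h_0$ by construction. Second, I would check $Q\in\mathcal Q(V_0,\h,\omega_T)$: the constraint $\underset{x,y,z}{\mathfrak S}\<Q(x,y),z\>=0$ comes from the full Bianchi identity~(\ref{pair})-type relation applied to the triple $(e_+,x,y)$ with $z\in V_0$, picking out the $V^0$-values; here the torsion terms combine into exactly the expression appearing in~(\ref{omegaT}) when one pairs with $e_+$, so the second constraint $\<e_+,\underset{x,y}\Wedge Q(x,y)\>=\omega_T(x,y)$ drops out. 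Third, for injectivity I would show $R_0=0$ and $Q=0$ forces $R=0$: if $Q=0$ then $R(e_+,x)=0$ for all $x\in V_0$, and if $R_0=0$ then $R(x,y)\in\g_-$ for $x,y\in V_0$; the Bianchi identity $\underset{x,y,z}{\mathfrak S}R(x,y)z=-\underset{x,y,z}{\mathfrak S}T(T(x,y),z)=0$ (by~(\ref{eiT})) together with the fact that elements of $\g_-$ act on $V_0$ by $x\mapsto -(v^\top x)e_-$ forces the three $\g_-$-vectors $\pi_-R(x,y)$ to satisfy the same cyclic identity as a curvature tensor of $\so(n)$-type valued in $\R^n$, and a standard first-Bianchi argument (the map is skew in $x,y$ and its ``value'' is cyclic-symmetric) yields $R(x,y)=0$. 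Finally, for type~2 I would prove surjectivity: given $(R_0,Q)$ with $R_0\in\mathcal R(V_0,\h_0)$ and $Q\in\mathcal Q(V_0,\h,\omega_T)$, reconstruct $R$ by setting $R(e_+,x):=Q(x)$ and defining $R(x,y)\in\h$ to have $\so(n)$-part $R_0(x,y)$ and $\g_-$-part determined by the Bianchi identity (solvable because the ``defect'' $\underset{x,y,z}{\mathfrak S}(R_0(x,y)z+\cdots)$ lies in $\R\cdot e_-$ and can be absorbed into the $\g_-$-components, using $\pi_-(\h)=\g_-$ which holds for type~2); one then checks the remaining Bianchi components hold automatically, using the $\mathcal Q$-constraints and~(\ref{omegaT}).

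\textbf{Main obstacle.} The delicate point is the reconstruction/surjectivity step for type~2: one must show that the system of equations expressing all components of the first Bianchi identity for the reconstructed $R$ is consistent, i.e.\ that solving for the $\g_-$-parts of $R(x,y)$ does not over-determine the system. Concretely, the Bianchi identity has components in $V_-$, $V_0$, and $V_+$; the $V_+$-component and part of the $V_0$-component involve only $R_0$, $Q$ and $T$ and must be shown to vanish identically given the hypotheses on $(R_0,Q)$ — this is where the precise definition of $\omega_T$ in~(\ref{omegaT}) and the constraint $\<e_+,\underset{x,y}\Wedge Q(x,y)\>=\omega_T(x,y)$ are used — while the $V_-$-component is a genuine equation that \emph{defines} the $\g_-$-part of $R(x,y)$, and one needs $\pi_-(\h)=\g_-$ to solve it and skew-symmetry plus a counting argument to see the solution is consistent across all cyclic permutations. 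For types other than~2 the map $\pi_-|_\h$ need not be surjective, which is exactly why surjectivity fails and only an injection is claimed in general. I expect the bookkeeping of these grading components of the Bianchi identity, rather than any conceptual difficulty, to be the bulk of the work.
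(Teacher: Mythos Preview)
Your overall strategy matches the paper's: define the map $R\mapsto(Q,R_0)$ with $Q(x)=R(e_+,x)$ and $R_0(x,y)=\pi_0(R(x,y))$, verify the target constraints, and prove injectivity. The verifications that $R_0\in\mathcal R(V_0,\h_0)$ and that $Q$ satisfies the two defining conditions of $\mathcal Q(V_0,\h,\omega_T)$ proceed essentially as you outline.

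However, your injectivity argument has a genuine gap. You propose to use only the Bianchi identity on triples $x,y,z\in V_0$: if $R_0=0$ and $Q=0$, then $R(x,y)\in\g_-$, and writing $v_{xy}=\pi_-(R(x,y))$, the condition $\underset{x,y,z}{\mathfrak S}R(x,y)z=0$ reads $\underset{x,y,z}{\mathfrak S}\langle v_{xy},z\rangle=0$. But this does \emph{not} force $v_{xy}=0$: the space of skew maps $\Lambda^2V_0\to V_0$ satisfying the first Bianchi identity is the Schur module of shape $(2,1)$, of dimension $\tfrac{n(n^2-1)}{3}$, which is nonzero for $n\ge2$. There is no ``standard first-Bianchi argument'' that kills it.

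The paper's remedy is to use the pairwise symmetry of Lemma~\ref{symlemma}(2):
\[
\langle R(x,y)e_+,z\rangle=\langle R(e_+,z)x,y\rangle=\langle Q(z)x,y\rangle.
\]
Since $R(x,y)\in\h\subseteq\so(n)\ltimes\g_-$ gives $R(x,y)e_+=\pi_-(R(x,y))\in V_0$, this identity \emph{determines} $\pi_-(R(x,y))$ from $Q$, and injectivity follows immediately. (Equivalently, you could repair your argument by applying Bianchi to the triple $(e_+,x,y)$ rather than $(x,y,z)$; for the difference of two preimages, which lies in the torsion-free space $\mathcal R(V,\h,0)$, this yields $R(x,y)e_+=0$ directly.) The same pairwise-symmetry formula also hands you the inverse for type~2: simply set $\langle R(x,y)e_+,z\rangle:=\langle Q(z)x,y\rangle$, which is automatically skew in $x,y$ since $Q(z)\in\so(V)$. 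There is no linear system to solve for the $\g_-$-part and no consistency issue across cyclic permutations; your proposed surjectivity argument would work but is more laborious than needed.
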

\begin{proof}
The injection is defined by mapping an $R
\in \mathcal R(V,\h,T)$ to the pair $(Q,R_0)$ where
\begin{equation}\label{pq}Q(x)=R(e_+,x)\in \h,\qquad R_0(x,y)=\pi_0(R(x,y)),\qquad \text{for }x,y\in V_0.\end{equation}
Because of the first point in Lemma~\ref{symlemma}, $R_0$ satisfies the defining condition of $\mathcal R(V_0,\h_0)$. Similarly, again from Lemma~\ref{symlemma}, we have for all $x,y,z\in V_0$ that
\[
\underset{x,y,z}{\mathfrak{S}}    \< Q(x,y),z\>= 
\underset{x,y,z}{\mathfrak{S}}    \< R(e_+,x) y,z\>
=
-\underset{x,y,z}{\mathfrak{S}}    \< R(y,z) x,\e_+\>
=0,
\]
so $Q$ satisfies the first condition for $\mathcal Q(V_0,\h,\omega_T)$. 
To check the second condition we use the defining condition for $\mathcal R(V,\h,T)$ to get
\begin{eqnarray*}
Q(x,y)-Q(y,x)
&=&R(e_+,x)y-R(e_+,y)x
\\
&=&
R(y,x)e_+ -T(T(x,y),e_+) -T (T(e_+,x),y)+T (T(e_+,y),x).
\end{eqnarray*}
Multiplying by $\<e_+,.\>$ and recalling that $\h\in \g$, so that $\<R(x,y)e_+,e_+\>=0$, yields the second equation for $Q$.

The injectivity of the map follows from $e_-\hook R=0$, and from Lemma~\ref{symlemma} with
\[\<R(x,y)e_+,z\>=\<R(e_+,z)x,y\>=\<Q(z,x),y\>.\]

If $\h$ is of type 2,~i.e. $\h= \h_0\ltimes \g_-=\h_0\ltimes V_0$, then we can define an inverse as follows.
Let $Q\in\mathcal Q (V_0,\h,\omega_T)  $ and $R_0\in \mathcal R(V_0,\h_0)$, and define $R\in \Lambda^2V^* \otimes \h $ by $e_-\hook R=0$ and (\ref{pq})~i.e. 
by \[\pi_0(R(x,y))=R_0(x,y),\quad R(e_+,x)=Q(x),\]
and 
\[ \<R(x,y)e_+,z\>= \<Q(z,x),y\>,\]
for all $x,y,z\in V_0$.
Then  $R\in  \mathcal R (V,\h,T)$.
\end{proof}

%
%

\begin{lemma}\label{lemma2}
Let $T$ be as in (\ref{Tcond}) and $\h$ an indecomposable subalgebra of $\so(n)\ltimes \g_-$ and $\h_0=\pi_{\so(n)}(\h)$. 
 Then 
$\mathcal Q(V_0,\h,\omega_T)$ injects into 
\[\mathcal P(V_0,\h_0) \+ 
 (V_0\otimes V_0)\]
where
$\mathcal P(V_0,\h_0)$ is defined in Theorem~\ref{curvtheo}. 
\end{lemma}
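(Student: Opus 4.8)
The plan is to produce an explicit injection from $\mathcal Q(V_0,\h,\omega_T)$ into $\mathcal P(V_0,\h_0)\+(V_0\ot V_0)$ by decomposing an element $Q\in\mathcal Q(V_0,\h,\omega_T)$ according to the splitting $\h\subseteq \h_0\ltimes\g_-$. Since for every $x\in V_0$ the element $Q(x)\in\h$ sits inside $\h_0\ltimes\g_-$, I would write $Q(x)=P(x)+\overline{q(x)}$, where $P(x):=\pi_{\so(n)}(Q(x))\in\h_0$ and $q(x):=\pi_-(Q(x))\in V_0$. This defines a tensor $P\in V^0\ot\h_0$ and a tensor $q\in V^0\ot V_0\simeq V_0\ot V_0$. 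The candidate injection is $Q\mapsto (P,q)$.

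The first step is to check that $P$ lands in $\mathcal P(V_0,\h_0)$, i.e.\ that $\underset{x,y,z}{\mathfrak S}(P(x,y))^\top z=0$ for all $x,y,z\in V_0$. For this I would take the first defining condition of $\mathcal Q(V_0,\h,\omega_T)$, namely $\underset{x,y,z}{\mathfrak S}\<Q(x,y),z\>=0$, and observe that for $w\in V_0$ and an element $X=X_0+\overline{v}\in\h_0\ltimes\g_-$ one has $Xw=X_0w-(v^\top w)e_-$, so the $V_0$-component of $Q(x,y)w$ is exactly $(P(x,y))w$ where $P(x,y)=\pi_{\so(n)}(Q(x,y))$; projecting the cyclic identity onto $V_0$ (equivalently, pairing with $z\in V_0$) kills the $\g_-$-contribution because $\overline{v}\,x\in\R\cdot e_-$ is $\<.,.\>$-orthogonal to $V_0$, leaving precisely $\underset{x,y,z}{\mathfrak S}(P(x,y))^\top z=0$. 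Hence $P\in\mathcal P(V_0,\h_0)$. There is nothing further to check about $q$, since $V_0\ot V_0$ carries no constraint.

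The second step is injectivity of $Q\mapsto(P,q)$: if $P=0$ and $q=0$ then $Q(x)=P(x)+\overline{q(x)}=0$ for all $x\in V_0$, so $Q=0$. This is immediate because the decomposition $\g_0\ltimes\g_-\ni X\mapsto(\pi_{\so(n)}(X),\pi_-(X))$ is faithful on $\so(n)\ltimes\g_-$ (recall $\pi_\R(\h)=\{0\}$ since $\h\subseteq\so(n)\ltimes\g_-$, so there is no $\R$-component to lose). The only mild subtlety is bookkeeping: one should note that the second defining condition of $\mathcal Q(V_0,\h,\omega_T)$ — the one fixing $\<e_+,\underset{x,y}\Wedge Q(x,y)\>=\omega_T(x,y)$ — is an affine constraint that pins down the symmetric-in-a-suitable-sense part of the $\g_-$-valued data $q$ but does not obstruct the map; it simply means the image is an affine subspace rather than all of $\mathcal P(V_0,\h_0)\+(V_0\ot V_0)$, which is consistent with the statement (an injection, not a surjection).

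I expect the main obstacle to be purely notational: correctly tracking which component of $Q(x,y)w\in V=V_-\+V_0\+V_+$ survives when pairing with vectors in $V_0$ versus with $e_+$, and making sure the $\g_-$-part $\overline{q(x)}$ really does contribute nothing to the $\mathcal P$-condition (it contributes only to the $V_-$-component of $Q(x,y)w$) while it is genuinely what the second $\mathcal Q$-condition constrains. Once the grading relations $\g_a(V_b)\subseteq V_{a+b}$ from~\re{grading} are used carefully, the argument is a short linear-algebra computation with no analytic content.
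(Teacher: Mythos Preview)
Your proposal is correct and follows essentially the same approach as the paper: the paper defines the injection by $P(x):=\pi_0(Q(x))$ and $L(x,y):=\<\pi_-(Q(x)),y\>$, verifies that $P\in\mathcal P(V_0,\h_0)$ directly from the first defining condition of $\mathcal Q$, and notes injectivity is immediate since $Q(x)\in\h\subseteq\h_0\ltimes\g_-$. Your additional remark about the role of the second (affine) $\omega_T$-condition is accurate and a bit more explicit than the paper's treatment.
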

\begin{proof}
The injection assigns to $Q\in \mathcal Q(V_0,\h,\omega_T)$ the pair
$(P , L )$, where 
\[P(x):=\pi_0( Q(x))\in \h_0,\qquad L(x,y):=\< Q(x)e_+,y\>=\<\pi_-(Q(x)),y\>.\] 
By the definition of $\mathcal Q(V_0,\h,\omega_T)$, $P\in V^0\otimes \h_0$ has the correct symmetries to be  in $\mathcal P(V_0,\h_0)$. 
The injectivity follows immediately for $Q(x)\in \h\subseteq\h_0\ltimes V_0\subset \g$. 
\end{proof}
Composing the injections in Lemmas~\ref{lemma1}~and~\ref{lemma2} concludes the proof of Theorem~\ref{curvtheo}.
\end{proof}

\begin{remark}
In Lemma~\ref{lemma2}, when $\h$ is of type 2,  it is not hard to see that the range is given by the direct sum of 
$\mathcal P(V_0,\h_0)\+ \mathcal L_{\omega_T}$, where
\[\mathcal L_{\omega} := \{L\in V^0\otimes V^0\mid \underset{x,y}\Wedge L(x,y) =\omega(x,y)\},\]
to get the isomorphism 
\[\mathcal R(V,\h,T)\simeq
\mathcal R(V_0,\h_0)\+
\mathcal P(V_0,\h_0) \+ \mathcal L_{\omega_T}
\]
 in Theorem~\ref{curvtheo}.
 When  $\h$ is of type~4 with $\h\cap \g_-=V_2$ and defined by $\psi:\h_0\to V_1$ as in Theorem~\ref{bb-ike-theo},
 analysing the range in  in Lemmas~\ref{lemma1}~and~\ref{lemma2}  yields to the isomorphism
\[\mathcal R(V,\h,T)\simeq \big\{ (R,P,L)\in 
\mathcal R(V_0,\ker(\psi))\+
\mathcal P(V_2,\h_0) \+ \mathcal L_{\omega_T}\mid \mathrm{pr}_{V_1} \circ L=\psi\circ P\big\}
\]
 in Theorem~\ref{curvtheo}. 
Details are provided in \cite{steven-thesis}.
\end{remark}
For $T$ as in (\ref{Tcond}) and an indecomposable subalgebra of $\so(V_0)\ltimes \g_-$,   we  will now use Theorem~\ref{curvtheo} to describe 
 trivial submodules in $\mathcal R(V,\h,T)$. 
\begin{corollary}\label{curvtheo2}
Let  $\h$ an indecomposable subalgebra of $\so(n)\ltimes \g_-$, with $n\ge 1$  and $\h_0=\pi_{\so(n)}(\h)$. Assume that $T\in \Lambda^2 V^*\otimes V$ satisfies 
condition (\ref{Tcond}) and that $R\in \mathcal R(V,\h,T)$. 
If $\h\cdot R=0$, then $R(u,v)\in \g_-$ for all $u,v\in V$.
\end{corollary}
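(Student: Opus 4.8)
The plan is to isolate the $\so(n)$-part of $R$ and to kill it using only invariance under $K:=\h\cap\g_-$ together with the Bianchi and pairwise symmetries recorded in Lemma~\ref{symlemma}. By Theorem~\ref{bb-ike-theo} (and since $\pi_\R(\h)=\{0\}$) the algebra $\h$ is of type~2 or type~4, so $K=\{\overline x\mid x\in V_2\}$ for a subspace $V_2\subseteq V_0$, with $V_0=V_1\oplus V_2$, where $V_2=V_0$ and $V_1=\{0\}$ in type~2, and where $\h_0$ preserves $V_2$ while annihilating $V_1$. By Lemma~\ref{symlemma}(3) we have $e_-\hook R=0$, so $R$ is determined by $R(x,y)$ for $x,y\in V_0$ and $R(e_+,y)$ for $y\in V_0$, all of which lie in $\h\subseteq\so(n)\ltimes\g_-$; hence proving $R(u,v)\in\g_-$ for all $u,v$ amounts to proving $\pi_{\so(n)}(R(x,y))=0$ and $\pi_{\so(n)}(R(e_+,y))=0$ for all $x,y\in V_0$.

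First I would exploit invariance under $\overline x\in K$. For $y,z\in V_0$ the identity $(\overline x\cdot R)(y,z)=[\overline x,R(y,z)]-R(\overline x y,z)-R(y,\overline x z)=0$ collapses — since $\overline x$ maps $V_0$ into $V_-$ and $R(e_-,\cdot)=0$ — to $[\overline x,R(y,z)]=0$; writing $R(y,z)=A+\overline{\rho(y,z)}$ with $A=\pi_{\so(n)}(R(y,z))\in\h_0$ and using the brackets of $\p$ (so that $[\overline x,A]=-\overline{Ax}$), this forces $Ax=0$ for all $x\in V_2$, and with $A|_{V_1}=0$ we conclude $A=0$. Thus $R(x,y)=\overline{\rho(x,y)}\in\g_-$ for $x,y\in V_0$, with $\rho\in\Lambda^2 V_0^*\otimes V_0$. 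Likewise $(\overline x\cdot R)(e_+,y)=0$ for $\overline x\in K$ reduces (using also $R(e_+,e_-)=0$) to $P(y)x=-\rho(x,y)$ for all $x\in V_2$, $y\in V_0$, where $P(y):=\pi_{\so(n)}(R(e_+,y))\in\h_0$. On the other hand the pairwise symmetry of Lemma~\ref{symlemma}(2), applied to the quadruple $(e_+,y,x,z)$, gives $\langle P(y)x,z\rangle=\langle\rho(x,z),y\rangle$ for all $x,y,z\in V_0$.

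The final and main step is a symmetry argument for the tensor $\Theta(x,y,z):=\langle\rho(x,y),z\rangle$. It is always skew in its first two slots; comparing the two preceding relations for $x\in V_2$ shows that it is moreover skew in its last two slots whenever the first slot lies in $V_2$. On $V_2^{\otimes 3}$ these two antisymmetries generate all permutations of the three arguments, so $\Theta|_{V_2^{\otimes 3}}$ is totally skew, and then Lemma~\ref{symlemma}(1) (which yields $\underset{x,y,z}{\mathfrak{S}}\,\Theta(x,y,z)=0$) forces $\Theta|_{V_2^{\otimes 3}}=0$, i.e.\ $\rho(x,y)$ is orthogonal to $V_2$ for $x,y\in V_2$. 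Since also $P(y)x=-\rho(x,y)\in V_2$ for $x\in V_2$, non-degeneracy of $\langle\cdot,\cdot\rangle$ on $V_0$ gives $\rho(x,y)=0$ for $x,y\in V_2$; and putting $x\in V_1$ into $\langle P(y)x,z\rangle=\langle\rho(x,z),y\rangle$ and using $P(y)V_1=0$ gives $\rho(v,\cdot)=0$ for $v\in V_1$. Hence $\rho\equiv 0$, so $R(x,y)=0$ for $x,y\in V_0$, and then $P(y)x=-\rho(x,y)=0$ for $x\in V_2$ together with $P(y)V_1=0$ yields $P(y)=0$. Therefore $R(e_+,y)\in\g_-$, and with $R(e_-,\cdot)=0$ this gives $R(u,v)\in\g_-$ for all $u,v\in V$.

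I expect the one genuine subtlety to be the bookkeeping in this last step: keeping straight which antisymmetries of $\Theta$ hold only under the constraint that an argument lies in $V_2$, and handling type~2 and type~4 uniformly through the single pair $(V_1,V_2)$; everything else is a direct unwinding of the brackets of $\p$ and of the structure provided by Theorem~\ref{bb-ike-theo}. Equivalently, one may phrase the conclusion as the vanishing of the $\mathcal R(V_0,\h_0)$- and $\mathcal P(V_0,\h_0)$-components of an $\h$-invariant $R$ under the injection of Theorem~\ref{curvtheo}.
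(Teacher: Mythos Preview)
Your proof is correct and follows essentially the same approach as the paper: both use invariance under $K=\h\cap\g_-$ together with the pairwise symmetry and Bianchi identity of Lemma~\ref{symlemma} to kill first the $\so(n)$-part of $R(x,y)$ and then that of $R(e_+,y)$, handling types~2 and~4 uniformly via the pair $(V_1,V_2)$. The only cosmetic difference is packaging: the paper invokes Proposition~\ref{centprop} and the injection of Theorem~\ref{curvtheo} to reduce to showing $R_0=0$ and $P=0$ (using the identity $\pi_-(R(x,y))=P(x)y-P(y)x$), whereas you work directly with the tensor $\Theta(x,y,z)=\langle\rho(x,y),z\rangle$ and its partial antisymmetries, which is arguably more self-contained.
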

\begin{proof}
Let $R\in \mathcal R(V,\h,T)$ and define $R_0\in \mathcal R (V_0,\h_0)$ and $P\in \mathcal P(V_0,\h_0)$, as
\[
R_0(x,y)=\pi_0(R(x,y)),\qquad P(x)=\pi_0(R(e_+,x)),\qquad\text{for }x,y\in V_0.\]
This implies that 
\[
R(x,y)z=R_0(x,y)z - \< P(z)x,y\> e_-,\]
and hence
\[\pi_-(R(x,y))=P(x)y-P(y)x.
\]
From Theorem~\ref{curvtheo} it follows that $R(u,v)\in \g_-$ if and only  if $R_0=0$ and $P=0$. 

For a subalgebra $\h$ of type 2, the proof is relatively straightforward. In this case we take any  $Z_-\in \g_-\subseteq \h$ to obtain for all $x,y\in V_0$ that
\begin{equation}\label{Z+R}
0
=
(Z_-\cdot R)(x,y) = \left[ Z_-,R(x,y)\right] -R(Z_- x,y) - R(x,Z_-y) =\left[ Z_-,R(x,y)\right] , \end{equation}
because  $Z_-x\in V_-$ and $e_-\hook R=0$. This means that  $R(x,y)$ is contained in the centraliser in $\h$ of $\g_-$ and hence, by Proposition~\ref{centprop}, contained in $\g_-$. This implies that 
$R_0=0$. Then for the $\g_-$-component of $(Z_-\cdot R)(e_+,y)$ we get
for all  $y\in V_0$ that 
\begin{equation}\label{Z+R2}
0
 = \pi_-(\left[ Z_-,R(e_+,y)\right]) -\pi_-(R(Z_- e_+,y))
 =
 P(y)Z_-
 -
 P(Z_-)y-P(y)Z_-=-
 P(Z_-)y
  .\end{equation}
 Since $Z_-$ was arbitrary in $\g_-\simeq V_0$, this implies that $P=0$.

For $\h$ of type 4, we have $\h_0\subseteq \so(V_2)$ so that $R_0(x,y)$ and $P(x)$ are also  in $\so(V_2)$. Now we have equation~(\ref{Z+R}) only for $Z_-\in \H\cap \g_-=V_2$, so that $R(x,y)$ is in the centraliser in $\h$ of $V_2$, which is contained in $\so(V_1)\ltimes \g_-$. But with  $\h_0\subseteq \so(V_2)$, this implies that $R(x,y) \in \g_-\cap \h=V_2$ for all $x,y\in V_0$. Moreover, for type 4 we have equation (\ref{Z+R2}) only for $Z_-\in V_2$, so that $P(z)=0$ for all $z\in V_2$. Hence,  $P\in V^1\otimes \so(V_2)$, and the symmetry of $P$ implies that $P=0$.
\end{proof}

\section{Lorentzian Ambrose--Singer connections}
\label{LASCsec}
\subsection{Lorentzian connections with special holonomy and their screen bundle} 
\label{screensec}
Let $(M,g)$ be a Lorentzian manifold. We call a connection $\tnab$ on $TM$ a {\em Lorentzian connection} if $\tnab g=0$. We denote the curvature of $\tnab$ by $\tR$ and its holonomy algebra by $\widetilde{\hol}$.
If the holonomy group is indecomposable, non-irreducible, $TM$ admits a parallel null line subbundle  $\mathcal X\subset TM$. 
Since $\tnab g=0$, its orthogonal space $\mathcal X^\perp$, which contains $\mathcal X$ is also parallel, so the tangent bundle is filtered by parallel subbundles of rank and co-rank $1$ as
\[ \mathcal X\subset \mathcal X^\perp\subset  TM.\]
The parallel filtration of $TM$ allows to define the screen bundle 
\[\E:=\mathcal X^\perp/\mathcal X.\] 
Since $\mathcal X$ is null and parallel,   the metric and the connection induce a positive definite metric $g^{\E}$ and a connection $\tnab^{\E}$ on $\E$ via
\[ g^{\E}([X],[Y]):= g(X,Y),\qquad \tnabE_V[\eta]:=[\tnab_V \eta ],\]
for all $V\in TM$, $X,Y\in \mathcal X^\perp$ and $\eta$ a section of $\mathcal X^\perp$, and where $[.]$ denotes the equivalence class in $\E$.
This immediately implies for the curvature of $\tnabE$ that
\[\tRE(U,V)X=[ \tR (U,V) X],\]
for all $U,V\in TM$ and $X\in \mathcal X^\perp$.
Therefore, $\tnab^{\E}$ is flat if and only if  \[g(\tR (U,V) X,Y)=0\text{ for all $U,V\in TM$ and $X,Y\in \mathcal X^\perp$.}\]
Moreover, in this context, we have the following statement, which was proven for the Levi-Civita connection in \cite{leistner05c}.
\begin{proposition}
Let $\gamma$ be a curve from $p$ to $q$ in $M$, $P_\gamma$ and $P^{\E}_\gamma$ be the parallel transports with respect to the connections $\tnab$ and $\tnabE$. Then the following diagram commutes
\begin{eqnarray}\nonumber
\mathcal X^\perp|_p& \stackrel{P_\gamma}{\longrightarrow} & \mathcal X^\perp|_q
\\
\label{cd}
\downarrow&&\downarrow
\\
\nonumber
\E|_p& \stackrel{P^\E_\gamma}{\longrightarrow} & \E|_q.\end{eqnarray}
In particular, the holonomy group and algebra of $\tnabE$ satisfy 
\[
\Hol (\E,\tnab) =\pi_{\O(n)}( \Hol(TM,\tnab), \qquad
\hol(\E,\tnabE)=\pi_{\so(n)}( \hol(TM,\tnab),\]
where $\pi_{\so(n)}: \g\to \so(n)$ is the projection defined in the previous section and $\pi_{\O(n)}$ the corresponding projection of the Lie groups.
\end{proposition}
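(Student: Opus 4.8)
The plan is to establish the commutativity of the diagram~\eqref{cd} and then deduce the holonomy statement as a formal consequence. First I would recall that by construction of $\tnabE$, a section $\eta$ of $\mathcal X^\perp$ that is $\tnab$-parallel along $\gamma$ projects to a section $[\eta]$ of $\E$ that is $\tnabE$-parallel along $\gamma$: indeed $\tnabE_{\dot\gamma}[\eta]=[\tnab_{\dot\gamma}\eta]=[0]=0$. Fixing $v\in\mathcal X^\perp|_p$, let $\eta$ be the $\tnab$-parallel vector field along $\gamma$ with $\eta(p)=v$; this stays in $\mathcal X^\perp$ because $\mathcal X^\perp$ is $\tnab$-parallel. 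Then $P_\gamma(v)=\eta(q)$ and $[\eta]$ is the $\tnabE$-parallel field along $\gamma$ with value $[v]$ at $p$, so $P^\E_\gamma([v])=[\eta(q)]=[P_\gamma(v)]$. This is exactly the assertion that the square~\eqref{cd} commutes, the left and right vertical arrows being the quotient projections $\mathcal X^\perp\to\E$.

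Next I would specialise to loops based at a fixed point $o\in M$ and pass to holonomy groups. Commutativity of~\eqref{cd} for a loop $\gamma$ at $o$ says precisely that the quotient map $q_o\colon\mathcal X^\perp|_o\to\E|_o$ intertwines $P_\gamma\in\Hol(TM,\tnab)$ (restricted to the invariant subspace $\mathcal X^\perp|_o$) with $P^\E_\gamma\in\Hol(\E,\tnabE)$. Choosing a Witt-type adapted basis at $o$ as in Section~\ref{algsec1}, with $\mathcal X|_o$ spanned by $e_-$ and $\mathcal X^\perp|_o=V_-\oplus V_0$, the induced action on the quotient $\E|_o\simeq V_0$ is exactly the composition with $\pi_{\O(n)}$ on the group level: an element of $\Hol(TM,\tnab)\subseteq\mathbf P=\mathrm{Stab}(\R e_-)$ acts on $V_0\simeq\mathcal X^\perp/\mathcal X$ through its $\pi_{\O(n)}$-block. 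Hence the image of $\Hol(TM,\tnab)$ under $\pi_{\O(n)}$ is contained in $\Hol(\E,\tnabE)$; conversely every loop realising an element of $\Hol(\E,\tnabE)$ is the image of the same loop in $TM$, giving the reverse inclusion. Therefore $\Hol(\E,\tnab)=\pi_{\O(n)}(\Hol(TM,\tnab))$, and differentiating this identity of groups (or directly, via the Ambrose--Singer description of the holonomy algebra as spanned by the $\tR|_o(U,V)$ together with the identity $\tRE(U,V)[X]=[\tR(U,V)X]$ established above) yields $\hol(\E,\tnabE)=\pi_{\so(n)}(\hol(TM,\tnab))$.

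There is essentially no serious obstacle: the content is the naturality of parallel transport under a morphism of bundles-with-connection, and the only thing to be careful about is that the vertical arrows in~\eqref{cd} are genuine morphisms of vector bundles with connection, which is guaranteed precisely because $\mathcal X\subset\mathcal X^\perp$ is $\tnab$-parallel (so $\tnabE$ is well defined) and $g$ descends to $g^\E$ (so we land in $\O(n)$ rather than $\GL(n)$). The mildly delicate point, worth one sentence in the write-up, is identifying the quotient action of $\mathbf P$ on $V_0$ with $\pi_{\O(n)}$: one notes that $\g_-$ acts trivially on $\mathcal X^\perp/\mathcal X$ and $\pi_\R$ acts trivially because the $\mathcal X$-component is killed in the quotient while the $V_+$-direction is not present in $\mathcal X^\perp$, so only the $\so(n)$-block survives, matching the projection $\pi_{\so(n)}\colon\g\to\so(n)$ from Section~\ref{algsec1}.
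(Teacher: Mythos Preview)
Your proof is correct and follows essentially the same approach as the paper: both use the identity $\tnabE_{\dot\gamma}[\eta]=[\tnab_{\dot\gamma}\eta]$ to show that $\tnab$-parallel sections of $\mathcal X^\perp$ project to $\tnabE$-parallel sections of $\E$, from which the commutativity of~\eqref{cd} follows. Your argument is in fact a bit more streamlined---the paper verifies the same thing via an explicit parallel frame computation---and you also spell out the passage from the diagram to the holonomy identity more fully than the paper does, including the identification of the induced $\mathbf P$-action on $\mathcal X^\perp/\mathcal X$ with $\pi_{\O(n)}$.
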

\begin{proof}
Let $\gamma:[0,1]\to M$ be a curve in $M$ with $\gamma(0)=p$ and $\gamma(1)=q$.
From the definition of $\tnab^\E$ it follows that
\[
\frac{\tnabE [X]}{dt}=\bigg[\frac{\tnab X }{dt}\bigg],\]
for $X:[0,1]\to \mathcal X^\perp$ a section of $\gamma^*\mathcal X^\perp$, where the brackets denote the equivalence classes.
Hence, for the diagram~(\ref{cd}) to commute, we have to show  for two sections $X$ and $Y$ of $\gamma^*\mathcal X^\perp$ such that $X$ is $\tnab$-parallel transported and $Y$ satisfies 
\begin{equation}\label{nabYt} \frac{\tnab Y}{\d t}\in \Gamma(\gamma^*\mathcal X),\qquad 
Y(0)-X(0)\in \mathcal X|_p,
\end{equation}
it holds that $X-Y$ is a section of $\gamma^*\mathcal X^\perp$.
For this, let $e_-, e_1, \ldots, e_m,e_+$ be a Witt basis of $T_pM$ with $e_-\in \mathcal X|_p$. Let $E_-\in \Gamma (\gamma^*\mathcal X )$ and $E_i \in \Gamma (\gamma^*\mathcal X^\perp )$, $i=1, \ldots, n$  be the parallel transported frame of $\gamma^*\mathcal X^\perp$, obtained from $(e_-, e_1,\ldots, e_n)$. 
Then, for any  section $Y$ of $\gamma^*\mathcal X^\perp$ we have
 \[Y(t)=Y^-(t) E_-(t)+Y^i(t)E_i(t),\qquad \frac{\tnab Y}{dt}= (Y^-)^\prime E_-+(Y^i)^\prime E_i,
 \]
 for some functions $Y^A$ of $t$.
If $X$ is parallel transported, then
$X(t)=a^-E_-(t)+ a^i E_i(t)$, 
for constants $a^-$ and $a^i$. If $Y$ satisfies~(\ref{nabYt}), then  $Y^i\equiv a^i$ constant, for $i=1, \ldots, n$.
Hence, $X(t)-Y(t)\in\mathcal X|_{\gamma(t)}$. This concludes the proof. 
\end{proof}

\subsection{Lorentzian $2$-symmetric connections}
We say that a Lorentzian connection~$\tnab$ with curvature $\tR$ is {\em second order symmetric} or {\em $2$-symmetric}, if $\tnab\tnab\tR=0$. 
Of course, this includes {\em locally symmetric Lorentzian connections}, i.e.~connections with $\tnab\tR=0$, and the Levi-Civita connections of second order symmetric spaces \cite{senovilla08,AlekseevskyGalaev11} and of locally symmetric spaces.
In this subsection we will prove the following result about locally  Lorentzian $2$-symmetric connections.
\begin{theorem}\label{VFtheo}
Let $\tnab$ be a  Lorentzian  $2$-symmetric connection on $(M,g)$ with $M$  of dimension $m\ge 4$ with indecomposable, non-irreducible holonomy algebra. Then, on the universal cover of $M$, $\tnab$  admits a parallel null vector field. In particular, the holonomy algebra of $\tnab$ is contained in $\so(m-2)\ltimes \R^{m-2}$. For locally symmetric connections, the result also holds for $m=3$.
\end{theorem}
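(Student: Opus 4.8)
\textbf{Proof strategy for Theorem~\ref{VFtheo}.}
The plan is to exploit the parallel filtration $\mathcal X\subset\mathcal X^\perp\subset TM$ coming from the indecomposable, non-irreducible holonomy, to choose a local recurrent null section $\xi$ of $\mathcal X$, and then to show that the recurrence $1$-form can be killed --- equivalently, that the holonomy algebra has trivial $\pi_\R$-part, so it sits in $\so(m-2)\ltimes\R^{m-2}$. On the universal cover, triviality of $\pi_\R$ of the holonomy representation is exactly what allows $\xi$ to be rescaled to a genuinely parallel null vector field (parallel transport of $\xi$ along loops acts by scalars $\exp(\int\theta)$, and $\pi_\R(\hol)=0$ forces these to be trivial; simple connectedness then removes the monodromy). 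So the whole theorem reduces to the purely infinitesimal claim: the holonomy algebra $\h:=\widetilde\hol$, which is an indecomposable subalgebra of $\p\subset\so(1,m-2)$, satisfies $\pi_\R(\h)=0$.

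To prove $\pi_\R(\h)=0$ I would bring in the $2$-symmetry hypothesis together with the algebraic results of Section~\ref{algsec}. First, $\tnab\tnab\tR=0$ says that $\tnab\tR$ is a $\tnab$-parallel tensor; but $\tnab\tR\in T^*M\otimes(\Lambda^2T^*M\otimes\End(TM))$, and a parallel tensor is fixed by the holonomy algebra, so $\nabla_U\tR$, for each $U$, lies in the trivial $\h$-submodule of (roughly) $\Lambda^2V^*\otimes\g$ --- more precisely, since $\tnab\tR$ is a section of a bundle associated to a $\g$-module containing $\tR$, its holonomy-invariance pins it down. Now Theorem~\ref{algtheo} (or rather Corollary~\ref{curvtheo2}, applied after checking that the torsion of a $2$-symmetric connection need not even be controlled here, so one works directly with $\tR$) tells us: \emph{if} $\pi_\R(\h)\ne0$, then the only trivial submodule in the relevant module is $\{0\}$, hence $\tnab\tR=0$. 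But then $\tnab$ is locally symmetric, and by Cahen--Wallach-type results, or by a short direct argument using that $\tR$ itself is then holonomy-invariant, we obtain a contradiction with $\pi_\R(\h)\ne0$ unless $\tR=0$, which contradicts non-irreducibility of the holonomy. The upshot is a clean dichotomy forcing $\pi_\R(\h)=0$.

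Let me say this more carefully in the order I would execute it. \textbf{Step 1:} Reduce to simply connected $M$ and pass to the infinitesimal holonomy data $(\m,\tR|_o)$; by Theorem~\ref{bb-ike-theo} classify $\h=\widetilde\hol$ into types 1--4, and note that types 2 and 4 are exactly $\pi_\R(\h)=0$, for which $\xi$ can be rescaled to parallel. \textbf{Step 2:} Suppose toward a contradiction that $\h$ is type 1 or 3. \textbf{Step 3:} Since $\tnab$ is $2$-symmetric, $\nabla_U\tR$ is $\tnab$-parallel for every $U$, hence $\h$-invariant; viewing the curvature module inside $\Lambda^2V^*\otimes\g$ (using $\h\subseteq\so(V)$), invoke Theorem~\ref{algtheo} to conclude that the trivial submodule is $\{0\}$ in types 1 and 3, so $\nabla_U\tR=0$ for all $U$, i.e. $\tnab\tR=0$. \textbf{Step 4:} Now $\tR$ itself is $\tnab$-parallel, hence $\h$-invariant, so $\tR\in\mathcal R(V,\h)^{\h}$; combine with the curvature symmetries and Corollary~\ref{curvtheo2} (with $T=0$) or the structure of $\mathcal R(V,\h,T)$ to force $\tR$ to be very degenerate --- essentially valued in $\g_-$ --- which contradicts $\pi_\R(\h)\ne0$ because a type-1 or type-3 algebra is generated (as a holonomy algebra, via the Ambrose--Singer theorem $\h=\span\{\tR|_o(X,Y)\}$) by curvature endomorphisms that must then have nonzero $\pi_\R$-component. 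This contradiction eliminates types 1 and 3. \textbf{Step 5:} For the $m=3$ refinement with $\tnab\tR=0$: here $V=\R^{1,1}$, $\h$ is a subalgebra of the $1$-dimensional $\so(1,1)$, the screen is trivial, and one checks by hand (the module $\Lambda^2V^*\otimes\g$ is $1$-dimensional) that a nonzero invariant curvature forces $\h$ to act trivially on the null line, giving the parallel null vector field directly.

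\textbf{Expected main obstacle.} The delicate point is Step 3--4: making precise that $\tnab\tR$ being a parallel section lands it in a trivial $\h$-submodule of the \emph{correct} module, and that this module is covered by Theorem~\ref{algtheo}/Corollary~\ref{curvtheo2} --- the curvature tensor lives in $\mathcal R(V,\h,T)\subseteq\Lambda^2V^*\otimes\h$, not in $V^*\otimes\g$, so one must either differentiate (the covariant derivative $\tnab\tR$ does pick up an extra $V^*$ factor, bringing it closer to the setting of Theorem~\ref{algtheo}) or reprove the triviality statement directly for $\mathcal R(V,\h,T)^{\h}$ in types 1 and 3, analogous to how Corollary~\ref{curvtheo2} handles types 2 and 4. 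I expect that the cleanest route is: show $\tnab\tR$, regarded as a $\g$-module element, is annihilated by $\h$, and that in types 1 and 3 \emph{every} $\h$-trivial element of the ambient curvature-derivative module vanishes (this is the genuine content, and it is where the grading element $E\in\h$ in type 1, resp. the graph condition in type 3, is used exactly as in the proof of Theorem~\ref{algtheo}); then local symmetry plus Corollary~\ref{curvtheo2} applied to $\tR$ finishes it. Handling the $2$-symmetric (as opposed to merely locally symmetric) case in the stated generality, without any hypothesis on the torsion of $\tnab$, is the part that requires the most care.
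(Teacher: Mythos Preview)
Your overall strategy---reduce to showing $\pi_\R(\h)=0$, argue by contradiction assuming type~1 or~3, and exploit $\h$-invariance of parallel tensors---matches the paper's. But you are missing the key simplification that makes the argument go through, and as a result Steps~3--4 have real gaps.

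The paper does \emph{not} attack the $\h$-invariance of the full tensors $\tR$ or $\tnab\tR$. Instead it uses the elementary recurrence identity
\[
\tR(X,Y)\xi \;=\; \d\theta(X,Y)\,\xi
\]
(Lemma~\ref{VFlem1}), which shows that the $\pi_\R$-part of the holonomy is governed entirely by the single $2$-form $\d\theta$. Differentiating gives $(\tnab_U\tR)(Y,Z)\xi = (\tnab_U\,\d\theta)(Y,Z)\xi$ (Lemma~\ref{VFlem3}). Thus: if $\tnab$ is locally symmetric then $\d\theta$ is $\tnab$-parallel; if $\tnab$ is $2$-symmetric then $\tnab\,\d\theta$ is $\tnab$-parallel. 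In the first case the metric dual of $\d\theta|_p$ is an $\h$-invariant element of $\g\cong\so(T_pM)$, hence lies in the centraliser $\z_\g(\h)$, which vanishes for types~1 and~3 by Proposition~\ref{centprop}---contradiction, and this works for all $m\ge 3$. In the $2$-symmetric case with $m\ge 4$, the parallel tensor $\tnab\,\d\theta$ is identified with an $\h$-fixed element of $V^*\otimes\g$, and Theorem~\ref{algtheo} says there are none in types~1 and~3; so $\tnab\,\d\theta=0$, reducing to the locally symmetric case.

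Your Steps~3--4 would instead require analogues of these vanishing results for the much larger modules $V^*\otimes\Lambda^2V^*\otimes\g$ (for $\tnab\tR$) and $\Lambda^2V^*\otimes\g$ (for $\tR$); you rightly flag this as the main obstacle, and the paper simply sidesteps it via $\d\theta$. Note also that Corollary~\ref{curvtheo2} has as a \emph{hypothesis} that $\h\subset\so(n)\ltimes\g_-$, i.e.\ $\pi_\R(\h)=0$, so it cannot be invoked in types~1 and~3 as you do in Step~4. Finally, in Step~5 your dimensions are off: $m=3$ means $V=\R^{1,2}$ and $\g=\so(1,2)$ is $3$-dimensional; the $m=3$ case is handled (for locally symmetric $\tnab$ only) by the same centraliser argument applied to the parallel $2$-form $\d\theta$.
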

The proof of this theorem is based an algebraic result from the previous section and on two lemmas, the first of which is well-known for the Levi-Civita connection. 
\begin{lemma}\label{VFlem1}
Let $M$ be a simply connected manifold, $\tnab$ an affine connection, and $\xi$  a $\tnab$-recurrent vector field,~i.e. a vector field with $\tnab\xi=\theta\otimes \xi$, for $\theta$ a section of $T^*M$. Then $\xi$ can be rescaled to a parallel vector field if and only if $\d \theta=0$, or equivalently, if $\tR(X,Y)\xi=0$.
\end{lemma}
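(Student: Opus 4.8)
The plan is to prove the two equivalences in the statement separately: ``$\xi$ rescales to a parallel field'' $\iff$ ``$\d\theta=0$'', and ``$\d\theta=0$'' $\iff$ ``$\tR(X,Y)\xi=0$''. The second is a one-line curvature computation using recurrence, and the first is the standard observation that on a simply connected manifold a closed $1$-form is exact.

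First I would compute, for a smooth nowhere-vanishing function $f$, using $\tnab\xi=\theta\otimes\xi$,
\[\tnab(f\xi)=\d f\otimes\xi+f\,\tnab\xi=(\d f+f\theta)\otimes\xi.\]
Hence $f\xi$ is $\tnab$-parallel if and only if $\d f=-f\theta$, i.e.\ $\theta=-\d\log|f|$ is exact. If $\d\theta=0$ then, since $M$ is simply connected, $\theta=\d u$ for some function $u$, and $f:=\e^{-u}$ is nowhere vanishing and solves $\d f=-f\theta$, so $f\xi$ is a parallel vector field spanning the same line bundle as $\xi$. Conversely, if $f\xi$ is parallel for some nowhere-vanishing $f$, then $\theta=-\d\log|f|$ is exact, hence closed. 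This establishes the first equivalence.

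Next I would establish ``$\d\theta=0$'' $\iff$ ``$\tR(X,Y)\xi=0$ for all $X,Y$''. Substituting $\tnab_Z\xi=\theta(Z)\xi$ into $\tR(X,Y)\xi=\tnab_X\tnab_Y\xi-\tnab_Y\tnab_X\xi-\tnab_{[X,Y]}\xi$, the terms quadratic in $\theta$ cancel, leaving
\[\tR(X,Y)\xi=\big(X(\theta(Y))-Y(\theta(X))-\theta([X,Y])\big)\xi=\d\theta(X,Y)\,\xi.\]
Since $\xi$ is nowhere vanishing (a recurrent vector field spans a line subbundle), this vanishes for all $X,Y$ precisely when $\d\theta=0$. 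Combining the two equivalences finishes the proof.

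There is no real obstacle; the only point needing care is the global existence of the primitive $u$ of $\theta$, which is exactly where simple connectedness of $M$ is used, together with the remark that $f=\e^{-u}$ is automatically nowhere zero so that the rescaled field genuinely spans the same line bundle as $\xi$.
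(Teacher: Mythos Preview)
Your proof is correct and follows essentially the same approach as the paper: both argue that a rescaling $f\xi$ is parallel iff $\d f+f\theta=0$, use simple connectedness to integrate a closed $\theta$ and take $f=\e^{-u}$, and then derive the curvature identity $\tR(X,Y)\xi=\d\theta(X,Y)\xi$ directly from recurrence. The only cosmetic difference is that you make the nowhere-vanishing of $\xi$ and of $f$ explicit, which the paper leaves implicit.
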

\begin{proof}
The recurrent vector field $\xi$ can be rescaled to a parallel vector field if and only if there is a  smooth function $h$ such that $\hat\xi=h\xi$ is parallel, i.e. if and only if
\[ \d h + h\theta =0.\]
Clearly, if such a function exists, $\theta$ is closed. On the other hand, if $\theta$ is closed, since $M$ is simply connected, there is a smooth function $f$ such that $\theta=\d f$. Then the function $h=\e^{-f}$ will solve the above equation. 

The other equivalence follows from the identity
\begin{equation}
\label{recurv}
\tR(X,Y)\xi=\d\theta (X,Y)\xi,\end{equation}
that is satisfied by every recurrent vector field.
\end{proof}
\begin{lemma}\label{VFlem3}
Let $\tnab$ be a  $2$-symmetric Lorentzian connection on $(M,g)$ and $\xi$ a $\tnab$-recurrent vector field, i.e.~with $\tnab\xi=\theta\otimes \xi$.
Then $\tnab \d\theta$ is a $\tnab$-parallel tensor field.
If $\tnab$ is locally symmetric, then the $2$-form $\d\theta $ is parallel.
\end{lemma}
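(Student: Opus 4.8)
The plan is to differentiate the recurrence identity for $\xi$ twice and extract the conclusion from the $2$-symmetry hypothesis; the Lorentzian signature will in fact play no role. First I would invoke Lemma~\ref{VFlem1}, in particular the identity~(\ref{recurv}), which says that $\tnab\xi=\theta\otimes\xi$ forces
\[\tR(X,Y)\xi=\d\theta(X,Y)\,\xi\]
for all $X,Y$. Differentiating this covariantly in a direction $Z$ and applying the Leibniz rule, the terms produced by $\tnab_Z\xi=\theta(Z)\xi$ are $\theta(Z)\,\tR(X,Y)\xi$ on one side and $\theta(Z)\,\d\theta(X,Y)\,\xi$ on the other; these agree by the displayed identity and therefore cancel, leaving
\[(\tnab_Z\tR)(X,Y)\xi=(\tnab_Z\d\theta)(X,Y)\,\xi .\]

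For the second part I would differentiate this new identity once more, in a direction $W$. Writing $\tnab\tR$ and $\tnab\d\theta$ for the tensors $(Z,X,Y)\mapsto(\tnab_Z\tR)(X,Y)$ and $(Z,X,Y)\mapsto(\tnab_Z\d\theta)(X,Y)$, the Leibniz rule on the left gives $\bigl(\tnab_W(\tnab\tR)\bigr)(Z,X,Y)\xi+\theta(W)(\tnab_Z\tR)(X,Y)\xi$, and the first summand vanishes by the hypothesis $\tnab\tnab\tR=0$; on the right it gives $\bigl(\tnab_W(\tnab\d\theta)\bigr)(Z,X,Y)\xi+\theta(W)(\tnab_Z\d\theta)(X,Y)\xi$. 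Equating the two and using the identity from the previous step to cancel the $\theta(W)$-terms, one is left with $\bigl(\tnab_W(\tnab\d\theta)\bigr)(Z,X,Y)\,\xi=0$. Since a recurrent vector field is nowhere zero -- its zero set is open and closed, being cut out by the linear ODE that $\xi$ satisfies along curves -- one may divide out $\xi$ and conclude $\tnab(\tnab\d\theta)=0$, i.e.\ that $\tnab\d\theta$ is $\tnab$-parallel. In the locally symmetric case $\tnab\tR=0$, the first differentiation already yields $(\tnab_Z\d\theta)(X,Y)\,\xi=0$, hence $\tnab\d\theta=0$, so the $2$-form $\d\theta$ is parallel.

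The argument is essentially a bookkeeping exercise, so the only point that needs care is doing the bookkeeping correctly: at each differentiation one must ensure that the contribution of $\tnab\xi=\theta\otimes\xi$ is exactly cancelled by invoking the identity obtained at the previous stage, rather than being carried along, and that the surviving factor of $\xi$ is genuinely nonzero before it is cancelled. I do not anticipate any serious obstacle beyond this.
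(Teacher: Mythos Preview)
Your proposal is correct and follows essentially the same approach as the paper: differentiate the identity $\tR(X,Y)\xi=\d\theta(X,Y)\xi$ from Lemma~\ref{VFlem1} once, then twice, and apply the (2-)symmetry hypothesis. Your write-up is in fact more careful than the paper's, which simply records the two differentiated identities without spelling out the Leibniz-rule cancellations or the fact that $\xi$ is nowhere zero.
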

\begin{proof}
The proof follows from differentiating equation~\ref{recurv} to get,
\[(\tnab_X\tR)(Y,Z)\xi = (\nabla_X\d\theta)(Y,Z)\xi, \]
and 
\[(\tnab_V\tnab_X\tR)(Y,Z)\xi = (\tnab_V\tnab_X\d\theta)(Y,Z)\xi. \]
The local symmetry or $2$-symmetry then gives the results.
\end{proof}

\begin{proof}[{\bf Proof of Theorem~\ref{VFtheo}}]
If the holonomy algebra  of $\tnab$   is indecomposable and non-irreducible, it admits an invariant null line. Hence, on the universal cover, where the holonomy group is connected and therefore also admits an invariant null line, there is a $\tnab$-parallel null line bundle. 
On the universal cover this null line bundle is trivial and hence admits a global section $\xi$, which, since the null line bundle is parallel,  is a recurrent vector field, i.e.~$\tnab \xi=\theta\otimes \xi$.
By using Lemma~\ref{VFlem1},
for a proof by contradiction, we assume that there is a $p$ in the universal cover of $M$ such that $\d\theta|_p\not=0$. Hence, the holonomy algebra $\h$ of $\tnab$ at $p$ satisfies $\mathrm{pr}_{\R}(\h)\not=0$ and therefore is of type~1 or~3.

If $\tnab$ is locally symmetric, by Lemma~\ref{VFlem3} the $2$-form $\d \theta$ is parallel with respect to $\tnab$. Hence it is invariant under the holonomy $\h$. This means that  
the  skew endomorphism that is the metric dual to $\d \theta|_p$ is in the centraliser in $\so(T_pM) $  of $\h$. Since $\h$ is assumed to be indecomposable, non-irreducible, and with $m\ge 3$, it is of one of the four types in Theorem~\ref{bb-ike-theo}. Since $\mathrm{pr}_{\R}(\h)\not=0$, by Proposition~\ref{centprop} the centraliser is trivial, and hence $\d\theta|_p=0$, a contradiction.

If $\tnab$ is $2$-symmetric, then $\nabla \d\theta$ can be 
 identified via the metric with a  section $S$ of $T^*M\otimes \so(TM)$ that is parallel by Lemma~\ref{VFlem3}. If 
 $m\ge 4$, we can apply Theorem~\ref{algtheo} to $S|_p$ and obtain that $S|_p=0$ because of $\mathrm{pr}_{\R}(\h)\not=0$. Since $S$ is parallel, $S$ vanishes everywhere. This implies that $\d\theta$ is parallel and we can produce a contradiction as in the symmetric case.

Finally, the holonomy algebra of the universal cover is equal to the holonomy algebra of $(M,\tnab)$. Therefore it    annihilates a null vector and thus is contained in $\so(m-2) \ltimes \R^{m-2}$.
\end{proof}

\subsection{Lorentzian  connections with   parallel torsion and special holonomy}\label{LPTsec}
We say that a Lorentzian connection $\tnab$ has {\em parallel torsion} $T$ if $\tnab T=0$.
We will use  Theorem~\ref{algtheo}, to compare a Lorentzian connection with parallel torsion and indecomposable, non irreducible holonomy to the Levi-Civita connection. We obtain the following  result.

\begin{theorem}\label{vftheo}
Let $(M,g)$ a simply connected Lorentzian manifold of dimension $m\ge 4$ with Levi-Civita connection $\nabla$.  Assume that $\tnab$ is a Lorentzian connection with parallel torsion and  indecomposable, non-irreducible holonomy holonomy $\h$. Then $\tnab$ admits a parallel null vector field if and only if  the $\nabla$ admits a parallel null vector field, and both vector fields span the same null line bundle. Moreover, if $\tnab$ does not admit a parallel null vector field, then $\tnab=\nabla$.
\end{theorem}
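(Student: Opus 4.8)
The plan is to work with the difference tensor $S:=\nabla-\tnab$, which by~\eqref{Sdef} is a section of $T^*M\otimes\so(TM,g)$, and to exploit that it is $\tnab$-parallel: since $\tnab T=0$, Lemma~\ref{curvlem}(\ref{curvlem2}) (see also the discussion in Section~\ref{review}) gives $\tnab S=0$. Fixing $o\in M$ and identifying $(T_oM,g|_o)\cong V=\R^{1,m-1}$ so that $\h$ becomes an indecomposable, non-irreducible subalgebra of $\g\cong\so(1,m-1)$ conjugated into $\p$, the value $S|_o$ then lies in the largest trivial $\h$-submodule $W$ of $V^*\otimes\g$ studied in Theorem~\ref{algtheo}. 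The argument naturally splits according to Theorem~\ref{bb-ike-theo}: either $\pi_\R(\h)\neq 0$ (types~1 and~3) or $\pi_\R(\h)=0$ (types~2 and~4).

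First I would treat the case $\pi_\R(\h)\neq 0$. Here Theorem~\ref{algtheo} forces $W=\{0\}$, hence $S\equiv 0$ and $\tnab=\nabla$. Moreover, in this case neither connection admits a parallel null vector field: such a field would span a parallel, hence $\h$-invariant, null line, which by indecomposability of $\h$ must be the canonical parallel null line bundle $\mathcal X$; but then $\h$ would fix a null vector, forcing $\pi_\R(\h)=0$, a contradiction. This already establishes the final ``moreover'' clause (a $\tnab$ without parallel null vector field must fall into this case), and the asserted equivalence holds here because both of its sides are false.

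Next, the case $\pi_\R(\h)=0$, i.e.\ $\h\subseteq\so(m-2)\ltimes\g_-$, which acts trivially on $V_-=\mathcal X|_o$. Since $M$ is simply connected, $\mathcal X$ carries a global section $\xi$ with $\tnab\xi=\theta\otimes\xi$, and as the holonomy fixes $\xi$ one has $\tR(X,Y)\xi=0$, so by Lemma~\ref{VFlem1} we may rescale $\xi$ to a $\tnab$-parallel null vector field. To produce a $\nabla$-parallel one I would compute $\nabla_X\xi=\tnab_X\xi+S(X)\xi=S(X)\xi$ and read off from the normal form~\eqref{Sexplicit} of elements of $W$ that $S(v)\xi=a\,g(v,\xi)\,\xi\in\R\xi$ for all $v$, where $a$ is the parameter in~\eqref{Sexplicit}, here varying smoothly over $M$; thus $\xi$ is $\nabla$-recurrent and, by Lemma~\ref{VFlem1} again, it rescales to a $\nabla$-parallel field precisely when $R(X,Y)\xi=0$. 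For this I would invoke Lemma~\ref{curvlem}(\ref{curvlem1}) with $\tnab S=0$, giving $R(X,Y)=\tR(X,Y)+S(T(X,Y))+[S(X),S(Y)]$; evaluated on $\xi$, the first term vanishes because $\h$ kills $\mathcal X$, the last term vanishes because $S(X)\xi,S(Y)\xi\in\R\xi$, and the middle term equals $a\,g(T(X,Y),\xi)\,\xi$, which is zero because the $\tnab$-parallel torsion $T$ is $\h$-invariant and hence, by Corollary~\ref{Talgcor}, takes values in $\mathcal X^\perp=\xi^\perp$, on which $\xi$ is isotropic. Hence $R(X,Y)\xi=0$ and $\nabla$ admits a parallel null vector field spanning $\mathcal X$.

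Combining the two cases yields the statement: $\tnab$ admits a parallel null vector field $\iff$ $\pi_\R(\h)=0$ $\iff$ $\nabla$ admits one (in the complementary case $\tnab=\nabla$ has none), and whenever they exist both span the canonical bundle $\mathcal X$ --- the $\tnab$-parallel one because $\mathcal X$ is the unique $\h$-invariant null line, the $\nabla$-parallel one by the construction above; the last displayed claim $\tnab=\nabla$ is exactly the content of the case $\pi_\R(\h)\neq 0$. I expect the only genuine obstacle to be the vanishing $R(X,Y)\xi=0$ in the case $\pi_\R(\h)=0$: it hinges on feeding the precise normal form~\eqref{Sexplicit} of the $\h$-invariant difference tensor, together with the constraint from Corollary~\ref{Talgcor} that parallel torsion is $\mathcal X^\perp$-valued, into the curvature identity of Lemma~\ref{curvlem}(\ref{curvlem1}), and on carefully distinguishing which of the three terms genuinely vanish rather than merely lie in $\R\xi$; the remaining steps are bookkeeping with the filtration $\mathcal X\subset\mathcal X^\perp\subset TM$ and Lemma~\ref{VFlem1}.
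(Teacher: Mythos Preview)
Your proposal is correct and follows essentially the same approach as the paper: the case split on $\pi_\R(\h)$, the use of Theorem~\ref{algtheo} to control $S$, and the appeal to Corollary~\ref{Talgcor} for $g(\xi,T)=0$ are all exactly as in the paper. The only cosmetic difference is that in the case $\pi_\R(\h)=0$ the paper shows $d\theta=0$ by noting $\theta=\alpha\,\xi^\flat$ with $\alpha$ constant (since $S$ and $\xi$ are $\tnab$-parallel) and then computing $d\xi^\flat$ via the torsion, whereas you verify $R(X,Y)\xi=0$ term by term from Lemma~\ref{curvlem}(\ref{curvlem1}); both computations reduce to the same vanishing $g(\xi,T(\cdot,\cdot))=0$.
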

%
%
\begin{proof}
Let $\tnab$ be as in the assumptions and $\nabla$ the Levi-Civita connection of $(M,g)$.
We define  $S\in \Gamma(T^*M\otimes \g)$  by
\begin{equation}\label{S}S(X,Y)=\nabla_XY-\tnab_XY.\end{equation}
Since $\tnab$ has parallel torsion, $S$ is parallel for $\tnab$. 
Let $p\in M$ and $\h$ the holonomy algebra of $\tnab$ at $p$. Since $\h$ is indecomposable, non-irreducible, $M$ admits a $\tnab$-parallel null-line bundle. We can fix a basis of $T_pM$, so that $\h\in \p$. 

First assume that $\tnab$ does not admit a parallel null vector field, i.e.~that $\mathrm{pr}_{\R}(\h)\not=0$. For this case, in Theorem~\ref{algtheo} it was shown that $S|_p=0$. Since $p$ was arbitrary, this shows that $\tnab=\nabla$ and proves the last sentence of the Theorem.
It also applies that if $\nabla$ admits a parallel null vector field, also $\tnab $ does. 

To complete the proof, we have to assume that $\tnab$ admits a parallel null vector field $\xi$, ad we assume that the basis of $T_pM$ is chosen such that $e_-=\xi|_p$. We have that that $\h\subset \so(n)\ltimes \g_-$. For this case in Theorem~\ref{algtheo} it was shown that $S|_p$ has a very special form, in particular that $S|_p(X,e_-)=\e^+(X)\otimes \e_-$ for all $X\in TM$. Since $p$ was arbitrary, this implies 
that $S$ defines a function $\alpha$ by 
\begin{equation}\label{sxi} S(.,\xi)=\alpha \xi^\flat \otimes \xi,\end{equation}
where $\xi^\flat:=g(\xi,.)$.
Since $S$ and $\xi$  are $\tnab$-parallel,  $\alpha$ must in fact be {\em constant}.
Hence, $\xi$ is recurrent for $\nabla$ with $\theta=\alpha\,\xi^\flat$. However, 
\[\d\theta(X,Y)=\alpha\, \d \xi^\flat (X,Y)=-2 g(\xi, T(X,Y))=0,\]
by the  properties of the parallel torsion $T$ in Corollary~\ref{Talgcor}. Hence, by Lemma~\ref{VFlem1}, and since $M$ is simply connected, $\xi$ can be rescaled to a $\nabla$-parallel null vector field.
\end{proof}

Let   $\E:=\xi^\perp/\R\xi$   be  the screen bundle that is equipped with a $\tnabE$-parallel metric $\gE$, see Section~\ref{screensec}
Since $S$ is a section of $T^*M\otimes \mathcal P$, the endomorphism $S(X)$ preserves the vector field $\xi$ and its orthogonal bundle $\xi^\perp$. Hence,  it defines a section $S^{\E}$ of $T^*M\otimes \so(\E)$,
\[ S^{\E}(X)[Y]:=\left[ S(X,Y)\right],\quad \text{ for $X\in TM$ and $Y\in \xi^\perp$.} \]
If $\hat \xi$ is the parallel null vector field for $\nabla$, it has the same screen bundle, $\hat\xi^\perp/\R\hat\xi=\E$, but with a different connection $\nabE$.
With the definiton of $S$ in~(\ref{S}), $\nabE$ and $\tnabE$ are related by
\begin{equation}\label{SE}
S^\E(X)[Y]=\nabla^\E_X[Y]-\tnabE_X[Y].\end{equation}
This will be used in the proof of the following proposition. This proposition is not necessarily  needed for the proof of Theorem~\ref{maintheo}, 
but it is relevant to it and may be of independent interest.

\begin{proposition}\label{screenprop}Let $(M,g)$ be a Lorentzian manifold of dimension $m\ge 4$ with Levi-Civita connection $\nabla$ and $\tnab$ a Lorentzian connection with parallel torsion. Assume that $
\tnab$ admits a parallel null vector field $\xi$ and has  indecomposable holonomy.
There is a section $\sigma$ of $\so(\E)$ such that $S^{\E}=\xi^\flat\otimes \sigma$, and  $\sigma $ is parallel for $ \tnab^{\E}$ and $\nabla^{\E}$. Moreover, the curvatures and the holonomy algebras of $\tnab^\E$ and $\nabla^\E$ are equal.
\end{proposition}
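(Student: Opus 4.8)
The plan is to push down to the screen bundle $\E=\xi^\perp/\R\xi$ the structural information about $S:=\nabla-\tnab$ already extracted in the proof of Theorem~\ref{vftheo}, and then to rerun at that level the Levi-Civita-versus-$\tnab$ comparison. Since $\tnab$ has parallel torsion, $S$ is $\tnab$-parallel (cf.~\eqref{ASdef1}), so at every point $p$ the tensor $S|_p$ lies in a trivial submodule of $V^*\otimes\g$ for the holonomy algebra $\h$ of $\tnab$; and $\mathrm{pr}_\R(\h)=0$ since $\tnab$ has a parallel null vector. Hence Theorem~\ref{algtheo} gives $S|_p\in(V^0\otimes\g_-)\oplus(V^+\otimes\p)$, i.e.\ $S|_p(e_-)=0$ and $S|_p(x)\in\g_-$ for $x\in V_0$. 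As $\g_-$ maps $\xi^\perp$ into $\R\xi$, the induced section $S^\E\in\Gamma(T^*M\otimes\so(\E))$ of Section~\ref{screensec} vanishes on $\xi^\perp$, so it factors through $\xi^\flat$: this is the asserted $\sigma\in\Gamma(\so(\E))$ with $S^\E=\xi^\flat\otimes\sigma$. (From~\eqref{algtheoequ} one also reads off $[\pi_{\so(n)}(\h),\sigma]=0$, so $\sigma$ already centralises the holonomy algebra of $\tnabE$; we will not need this.)

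For the parallelism, observe that $S$, being $\tnab$-parallel and preserving the filtration $\R\xi\subset\xi^\perp$, descends to a $\tnabE$-parallel section: for a section $Y$ of $\xi^\perp$ and vector fields $V,W$, $(\tnabE_VS^\E)(W)[Y]=[(\tnab_VS)(W,Y)]=0$. Since $\xi^\flat$ is $\tnab$-parallel ($\xi$ is, and $\tnab g=0$), differentiating $S^\E=\xi^\flat\otimes\sigma$ gives $0=\xi^\flat\otimes\tnabE\sigma$, hence $\tnabE\sigma=0$. On the Levi-Civita side, $\nabla^\E=\tnabE+S^\E$ by~\eqref{SE}, so $\nabla^\E_X\sigma=\tnabE_X\sigma+[S^\E(X),\sigma]=\xi^\flat(X)\,[\sigma,\sigma]=0$. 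Thus $\sigma$ is parallel for both $\tnabE$ and $\nabla^\E$.

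For the curvatures, use the curvature-comparison formula for two connections on the bundle $\E$ — the analogue of Lemma~\ref{curvlem}(\ref{curvlem1}), with the torsion $T$ of $\tnab$ on $TM$ entering through $[X,Y]$:
\[
\RE(X,Y)=\tRE(X,Y)+(\tnab_XS^\E)(Y)-(\tnab_YS^\E)(X)+S^\E(T(X,Y))+[S^\E(X),S^\E(Y)].
\]
The two $\tnab S^\E$-terms vanish by the previous paragraph; $[S^\E(X),S^\E(Y)]=\xi^\flat(X)\xi^\flat(Y)\,[\sigma,\sigma]=0$; and $S^\E(T(X,Y))=g(\xi,T(X,Y))\,\sigma=0$, because the $\tnab$-parallel torsion $T$ takes values in $\xi^\perp$ by Corollary~\ref{Talgcor} (equivalently $\d\xi^\flat=0$, as noted in the proof of Theorem~\ref{vftheo}). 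Hence $\RE=\tRE$.

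Finally, the holonomy algebras. These are unchanged by passing to the universal cover, so we may assume $M$ simply connected; then the closed $1$-form $\xi^\flat$ equals $\d f$, and, fixing a basepoint $p$ with $f(p)=0$, the curve-dependent rescaling $t\mapsto e^{-(f\circ\gamma)(t)\,\sigma}$ takes $\tnabE$-parallel sections along a curve $\gamma$ to $\nabla^\E$-parallel ones (this is where $\tnabE\sigma=0$ is used), whence $P^{\nabla^\E}_\gamma=e^{-f(\gamma(1))\sigma}\circ P^{\tnabE}_\gamma$; for loops at $p$ this equals $P^{\tnabE}_\gamma$, so $\Hol_p(\nabla^\E)=\Hol_p(\tnabE)$ and the holonomy algebras coincide. (Alternatively, $\sigma$ being parallel commutes with $\tRE$ and all its $\tnabE$-derivatives, which equal the $\nabla^\E$-derivatives of $\RE=\tRE$, so Ambrose--Singer produces the same algebra.) The step requiring the most care is precisely this last one: equality of curvature tensors does not by itself force equality of holonomy, and it is only the special shape $S^\E=\xi^\flat\otimes\sigma$ with $\sigma$ parallel and $\xi^\flat$ closed — together with the passage to the universal cover and the basing of parallel transports at a fixed point — that makes the comparison go through; the earlier steps are routine bookkeeping with Theorem~\ref{algtheo}, Corollary~\ref{Talgcor}, and the screen-bundle formalism of Section~\ref{screensec}.
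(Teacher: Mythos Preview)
Your proof is correct and, up to the final holonomy step, essentially identical to the paper's: the form $S^\E=\xi^\flat\otimes\sigma$ from Theorem~\ref{algtheo}, the parallelism of $\sigma$ via $\tnab S=0$ and $\tnab\xi^\flat=0$, and the vanishing of the correction terms in the curvature comparison (using Corollary~\ref{Talgcor} for $g(\xi,T(\cdot,\cdot))=0$) all match.

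The genuine difference is in the holonomy argument. The paper follows \cite{ErnstGalaev22}: since $\sigma$ is parallel for both $\tnabE$ and $\nabla^\E$, the centraliser subbundle $\z(\sigma)=\{\phi\in\so(\E)\mid[\sigma,\phi]=0\}$ is parallel for both induced connections on $\so(\E)$; moreover, on this subbundle the two connections agree (because they differ by $\xi^\flat\otimes\mathrm{ad}_\sigma$), and the common curvature $\tRE=\RE$ takes values in $\z(\sigma)$, so Ambrose--Singer parallel transports of curvature coincide and the holonomy algebras are equal. Your primary argument instead exploits $\d\xi^\flat=0$ to write $\xi^\flat=\d f$ on the universal cover and produces the explicit gauge transformation $P^{\nabla^\E}_\gamma=e^{-f(\gamma(1))\sigma}\circ P^{\tnabE}_\gamma$, which for loops at a point with $f=0$ gives equal holonomy \emph{groups}, hence equal algebras. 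This is a bit more direct and yields the slightly stronger statement at the group level, at the cost of the passage to the universal cover; the paper's bundle-theoretic argument stays on $M$ and is more structural. Your parenthetical alternative (Ambrose--Singer plus $\sigma$ commuting with all curvature derivatives) is essentially the paper's route.
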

\begin{proof}
By Theorem~\ref{algtheo}, the section $S^\E$ satisfies that  $S^{\E}(X)=0$, whenever $X\in \xi^\perp$.  Hence it is of the form $\xi^\flat\otimes \sigma$ for $\sigma $ a section of $\so(\E)$. 

To show that $\sigma$ is parallel, consider the connection 
$\widetilde{D}=\tnab\otimes \tnab^{\E}$ on $T^*M\otimes \so(\E)$.
A direct calculation shows that 
\[(\widetilde{D}_XS^\E)(Y)[Z] =\left[ (\tnab_XS)(Y,Z)\right],\]
so $\tnab S=0$ implies that $S^\E$ is parallel for $\widetilde{D}$. Then, since $\xi^\flat$ is parallel for $\tnab$, the section $\sigma$ is parallel for $\tnab^\E$, $\tnabE\sigma=0$.
Then, for $\nabE \sigma$ we get
\[
\nabE_X\sigma = [S^E(X),\sigma ] =\xi^\flat (X)[\sigma,\sigma]=0,\]
by the definition of $\sigma$.

Next we note that the curvatures of of $\nabE$ and $\tnabE$ are equal. This follows 
from  Lemma~\ref{curvlem}, and from the same argument in the proof of Theorem~\ref{maintheo} in Section~\ref{LASsec}, that uses the algebraic Theorem ~\ref{algtheo} for $S$.

Finally, we will show that the holonomy algebras of $\nabE$ and $\tnabE$ are equal. Here we follow the proof in \cite[Proposition 2]{ErnstGalaev22}. For this, we define the bundle
\[\z(\sigma)=\{ \phi\in \so(\E)\mid [\sigma,\phi]=0\}. \]
Since $\sigma$ is parallel for both connections $\nabE$ and $\nabE$, it is a parallel for both induced connections on $\so(\E)$. With~(\ref{SE}) we have that  \[\xi^\perp\otimes \sigma=\nabE-\tnabE,\] so that the induced connections on $\z(\sigma)$ are equal. Moreover, since $\sigma$ is parallel, the curvature of $\tnabE$ (and also of $\nabE$, since they are equal) are sections of $\Lambda^2T^*M\otimes \z(\sigma)$. 
By the Ambrose-Singer holonomy theorem \cite{as}, the holonomy algebra of $\nabla$ on $\E$ is spanned by all endomorphism of the form
\[ P^{\so(E)}_\gamma(R^\E|_q (X,Y))= P_{\gamma}^{-1}\circ R|_q(X,Y) \circ  P_\gamma,\] where $\gamma $ is a curve from $p$ to $q$,  $X,Y\in T_qM$, and $P_\gamma$ and $P^{\so(E)}$ the parallel transports with respect to $\nabE$ in $\E$ and $\so(\E)$ , and in the same way for $\tnab$.
Since both curvatures are equal and contained in $\z(\sigma)$, and since $\z(\sigma)$ is parallel and both connections, and hence the parallel transports,  coincide on $\z(\sigma)$, this implies that the holonomy algebras are equal.
\end{proof}
In \cite{leistnerjdg} it was shown that the screen holonomy of the Levi-Civita connection of an indecomposable, non-irreducible Lorentzian manifold is a Riemannian holonomy algebra. Together with Proposition~\ref{screenprop} this yields the following result.
\begin{corollary}\label{screencor}Let $(M,g)$ be a Lorentzian manifold of dimension $\ge 4$ with  a Lorentzian connection  $\tnab$ with parallel torsion. Assume that $
\tnab$ admits a parallel null vector field $\xi$ and has and indecomposable holonomy.
Then the holonomy algebra of 
$ \tnab^{\E}$ is a Riemannian holonomy algebra.
\end{corollary}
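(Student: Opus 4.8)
The plan is to deduce this from Proposition~\ref{screenprop} together with the result of \cite{leistnerjdg} recalled just above. First I would check that the hypotheses line up. Since the dimension is at least $4$ and $\tnab$ admits a parallel null vector field $\xi$, its holonomy algebra lies in $\so(n)\ltimes\g_-\subset\p$ and is in particular non-irreducible, so $\tnab$ has indecomposable, non-irreducible holonomy; thus (on the universal cover, where the holonomy algebras are unchanged) Theorem~\ref{vftheo} applies and shows that the Levi-Civita connection $\nabla$ admits a parallel null vector field $\hat\xi$ spanning the same null line bundle $\mathcal X=\R\xi$. In particular the screen bundle $\E=\mathcal X^\perp/\mathcal X$ carries both induced metric connections $\tnabE$ and $\nabE$ of Section~\ref{screensec}, and Proposition~\ref{screenprop} applies verbatim.

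The essential input is the conclusion of Proposition~\ref{screenprop} that the holonomy algebras of $\tnabE$ and $\nabE$ are equal, so that it suffices to show $\hol(\E,\nabE)$ is a Riemannian holonomy algebra. If the Levi-Civita holonomy of $(M,g)$ is itself indecomposable, this is exactly \cite{leistnerjdg}. It need not be --- the holonomy of $\tnab$ being indecomposable does not force the same for $\nabla$ --- but $\nabla$ does preserve the null line $\mathcal X$, so I would invoke the Lorentzian analogue of the de Rham--Wu decomposition theorem: on the (simply connected) universal cover, $(M,g)$ splits locally as a product of an indecomposable Lorentzian factor $(M_0,g_0)$ carrying the parallel null line, possibly a flat factor, and Riemannian factors with irreducible holonomy. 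The screen bundle $\E$ and the connection $\nabE$ split along this product, so $\hol(\E,\nabE)$ is the direct sum of the screen holonomy of $(M_0,g_0)$ --- a Riemannian holonomy algebra by \cite{leistnerjdg} --- with the holonomy algebras of the Riemannian factors. A direct sum of Riemannian holonomy algebras is again one, being realised by the product manifold, which completes the argument.

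I do not expect a genuine obstacle: the corollary is a formal consequence of Proposition~\ref{screenprop} and a cited theorem, and the only mildly delicate point is the passage from the possibly-decomposable Levi-Civita connection to an indecomposable factor, which is handled by the de Rham--Wu splitting. One could even sidestep that entirely by observing that the arguments of \cite{leistnerjdg} in fact show the screen holonomy of the Levi-Civita connection of \emph{any} Lorentzian manifold with a parallel null line to be a Riemannian holonomy algebra, so that $\hol(\E,\nabE)$ --- and hence, by Proposition~\ref{screenprop}, $\hol(\E,\tnabE)$ --- is Riemannian with no decomposition needed.
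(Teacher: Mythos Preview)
Your proposal is correct and follows the paper's approach exactly: combine Proposition~\ref{screenprop} (equality of the screen holonomies of $\tnabE$ and $\nabE$) with the result of \cite{leistnerjdg} on the Levi-Civita screen holonomy. Your extra care in handling the case where the Levi-Civita holonomy might fail to be indecomposable --- via the de~Rham--Wu splitting, or by noting that the argument in \cite{leistnerjdg} applies to any Lorentzian manifold with a parallel null line --- fills in a point the paper leaves implicit.
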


\begin{remark}
In the special case where the torsion is not only parallel but also totally skew, the equality of the screen holonomies in Proposition~\ref{screenprop} was proved in \cite[Proposition 2]{ErnstGalaev22}. Also in this paper, several examples are given that show that, despite the equality of the screen holonomy,   the holonomy algebras of $\nabla$ and $\tnab$ do not have to be equal. In particular, in \cite[Example 1]{ErnstGalaev22} the connection $\tnab$ has indecomposable holonomy.
\end{remark}


\subsection{Lorentzian Ambrose--Singer connections and the proof of Theorem~\ref{maintheo}}\label{LASsec}
Now we consider Lorentzian Ambrose--Singer connections with special holonomy and prove a result about their curvature as well as Theorem~\ref{maintheo}. Both will be based on the algebraic results of the previous section and on Lemma~\ref{curvlem}.
\begin{theorem}\label{ASflattheo}
Let $(M,g)$ be a Lorentzian manifold of dimension $\ge 3$ with Ambrose--Singer connection $\tnab$ with indecomposable, non-irreducible holonomy. Then, on the universal cover,  $\tnab$ admits a parallel null vector field $\xi$, and the induced connection on the screen bundle is flat, or equivalently, $\tR(X,Y)=0$ for all $X,Y\in \xi^\perp$.
\end{theorem}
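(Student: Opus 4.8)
The plan is to assemble both assertions from the algebraic results of Section~\ref{algsec}, using that an Ambrose--Singer connection is at once locally symmetric (hence $2$-symmetric) and has parallel torsion. I work throughout on the universal cover, which is harmless since the statement concerns the universal cover and the holonomy algebra is unchanged.

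The parallel null vector field is immediate from Theorem~\ref{VFtheo}: since $\tnab$ is locally symmetric it is in particular $2$-symmetric, so that theorem (in the locally symmetric version, which also covers $m=3$) provides a $\tnab$-parallel null vector field $\xi$ and shows $\h\subseteq\so(n)\ltimes\g_-$ with $n=m-2$, in particular $\pi_\R(\h)=0$, so that $\h$ is of type~$2$ or type~$4$ in Theorem~\ref{bb-ike-theo}. I fix $o\in M$, identify $V=T_oM$ and choose a Witt basis with $e_-=\xi|_o$, so $\h\subseteq\so(n)\ltimes\g_-\subset\p$ and $\xi^\perp$ corresponds to $V_-\oplus V_0$.

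Assume first $m\ge4$, i.e.\ $n\ge2$. Because $\tnab T=0$, the torsion is holonomy invariant, $\h\cdot T|_o=0$, and Corollary~\ref{Talgcor} forces $T|_o$ to satisfy~(\ref{Tcond}). Because $\tnab\tR=0$, the curvature is holonomy invariant, $\h\cdot\tR|_o=0$, while the Ambrose--Singer holonomy theorem \cite{as} gives $\tR|_o(u,v)\in\h$ and the first Bianchi identity with torsion (Lemma~\ref{curvlem}(2)) gives $\tR|_o\in\mathcal R(V,\h,T|_o)$. Corollary~\ref{curvtheo2} now yields $\tR|_o(u,v)\in\g_-$ for all $u,v\in V$. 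As every element of $\g_-$ kills $V_-$ and sends $V_0$ into $V_-$, it maps $\xi^\perp=V_-\oplus V_0$ into $\R\xi$; applying this at every point shows that $\tR$ maps $\mathcal X^\perp$ into $\mathcal X$, so $\tRE\equiv0$ and the screen connection is flat. For the equivalent formulation I invoke the torsion-corrected pairwise symmetry of Lemma~\ref{symlemma}(2): a short bilinearity argument reducing to entries in $V_-\oplus V_0$ and using skew-symmetry of $\tR$ gives $g(\tR(U,V)X,Y)=g(\tR(X,Y)U,V)$ for all $U,V\in T_oM$ and $X,Y\in\xi^\perp$; the left-hand side vanishes by the above (as $\tR(U,V)X\in\R\xi$ and $\R\xi\perp\xi^\perp$), so $\tR(X,Y)=0$ for $X,Y\in\xi^\perp$ by non-degeneracy of $g$, and conversely this identity returns flatness of the screen.

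It remains to handle $m=3$, i.e.\ $n=1$, where the algebra of Section~\ref{algsec} requires $n\ge2$ and so is not available. Here Theorem~\ref{VFtheo} gives $\h\subseteq\so(1)\ltimes\R=\g_-$, and since $\h\neq0$ by indecomposability we get $\h=\g_-=\R$; hence $\tR$ takes values in $\g_-$ by the Ambrose--Singer holonomy theorem, the screen bundle has rank one so its connection is flat automatically, and $\tR(\xi,e_1)=0$ follows from the one-line computation $0=(\overline{e_1}\cdot\tR|_o)(e_-,e_+)=-\tR|_o(\overline{e_1}e_-,e_+)-\tR|_o(e_-,\overline{e_1}e_+)=-\tR|_o(e_-,e_1)$, using $\overline{e_1}e_-=0$, $\overline{e_1}e_+=e_1$ and that $\g_-$ is abelian. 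In this proof the substantive work is carried out by the algebraic results of Section~\ref{algsec}; what needs care is only the bookkeeping that feeds the Ambrose--Singer hypotheses into those results (parallel torsion in the class~(\ref{Tcond}), parallel curvature an $\h$-trivial element of $\mathcal R(V,\h,T|_o)$), together with the low-dimensional case $m=3$, which the machinery does not cover and which is settled by the computation above.
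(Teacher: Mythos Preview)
Your proof is correct and follows essentially the same route as the paper: invoke Theorem~\ref{VFtheo} (locally symmetric case) for the parallel null vector field, feed the parallel torsion into Corollary~\ref{Talgcor} to land in class~(\ref{Tcond}), then apply Corollary~\ref{curvtheo2} to the parallel curvature to force $\tR(u,v)\in\g_-$, with the rank-one screen handling $m=3$. The only embellishments are that you spell out the ``equivalently'' clause via Lemma~\ref{symlemma}(2) and redo for $m=3$ the computation that the paper records separately as Lemma~\ref{hg-lemma}; both are fine and the overall argument matches the paper's.
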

\begin{proof}
Without loss of generality, we can assume that $M$ is simply connected.
Let $T$ be the torsion and $\tR$ the curvature of $\tnab$, both $\tnab$-parallel.  
Since we have assumed that $M$ is simply connected, by Theorem~\ref{VFtheo}, there is  a  $\tnab$-parallel null vector field $\xi$.
If $M$ is of dimension~$3$, then the screen bundle has rank one, and hence, since the screen connection preserves a metric, is flat. Assume from now on that $\dim(M)\ge 4$.
Let $p\in \tM$ and   $\h$ be the holonomy algebra of $\tnab$ at $p$. The Lie algebra $\h$ is of types 2 or 4 from Theorem~\ref{bb-ike-theo}.
By Lemma~\ref{curvlem} this implies that $\tR|_p\in \mathcal R(T|_pM,\h, T|_p)$ as defined in Section~\ref{curvsec}.
Since $T$ is parallel, it is invariant under  the holonomy algebra $\h$,~i.e. $\h\cdot T|_p=0$.
by Corollary~\ref{Talgcor}, $T|_p$ satisfies condition~(\ref{Tcond}) of Theorem~\ref{curvtheo}. Since the curvature is also parallel, $\tR|_p$ is annihilated by $\h$. Therefore, by Corollary~\ref{curvtheo2}, $\tR|_p(u,v)\in \g_-$ for all $u,v\in T_pM$. Hence, the curvature of the screen bundle vanishes at $p$. Since $p$ was arbitrary, the screen bundle  is flat.  
\end{proof}

This  yields an independent proof of the classification result by Cahen and Wallach in \cite{cahen-wallach70}.

\begin{corollary}
Let $(M,g)$ be a locally symmetric Lorentzian manifold with indecomposable, non-irreducible holonomy. Then $(M,g)$ is locally isometric to a Cahen--Wallach space as defined in Section~\ref{homplanesec}.
\end{corollary}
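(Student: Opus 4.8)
The plan is to reduce the statement to Theorem~\ref{ASflattheo}. The Levi--Civita connection $\nabla$ of a locally symmetric Lorentzian manifold $(M,g)$ is itself an Ambrose--Singer connection: it preserves $g$, its torsion vanishes and is hence trivially parallel, and $\nabla R=0$ is precisely the local symmetry hypothesis. Its holonomy is the Levi--Civita holonomy, which is assumed indecomposable and non-irreducible; in particular $\dim M=n+2$ with $n\ge 1$, so Theorem~\ref{ASflattheo} applies. Since the conclusion is local, we may first pass to the universal cover and assume $M$ is simply connected.

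Applying Theorem~\ref{ASflattheo} then yields a $\nabla$-parallel null vector field $\xi$ together with the vanishing of the screen curvature, equivalently $R(X,Y)=0$ for all $X,Y\in\xi^\perp$. Thus $(M,g)$ is a pp-wave, i.e.\ it satisfies conditions (1) and (2) from the introduction; and since $\nabla R=0$ it also satisfies $\nabla_XR=0$ for $X\in\xi^\perp$, which is condition (3). Hence $(M,g)$ is a plane wave, and in the local coordinates of~(\ref{pploc}) its defining function is $h(t,x^1,\dots,x^n)=x^iQ_{ij}(t)x^j$ as in~(\ref{planeloc}), with $Q=(Q_{ij})$ constant by local symmetry.

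It remains to check that indecomposability of the holonomy forces $Q$ to be non-degenerate, which then identifies $(M,g)$ locally with the Cahen--Wallach space determined by $Q$ (Section~\ref{homplanesec}). If $Q$ had a non-trivial kernel $K\subseteq\R^n$, then, after an orthogonal change of the coordinates $x^1,\dots,x^n$ adapted to $\R^n=K^\perp\oplus K$, the function $h=x^iQ_{ij}x^j$ would involve only the $K^\perp$-coordinates, so the metric~(\ref{pploc}) would split as an orthogonal product of a lower-dimensional plane-wave metric with a flat factor $\R^{\dim K}$. The holonomy of $\nabla$ would then preserve the non-degenerate Euclidean sub-bundle tangent to that factor, contradicting indecomposability. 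Therefore $Q$ is symmetric, constant and non-degenerate, so $(M,g)$ is locally isometric to a Cahen--Wallach space.

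The hard part is already packaged in Theorem~\ref{ASflattheo}; in carrying out the remaining steps the only point requiring genuine care will be the last one, namely that "indecomposable holonomy" does not merely permit the plane-wave form but actually pins down a non-degenerate $Q$. The splitting argument above handles this cleanly, avoiding any direct computation of the curvature components of~(\ref{planeloc}).
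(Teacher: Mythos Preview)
Your proof is correct and follows essentially the same route as the paper: recognise the Levi--Civita connection as an Ambrose--Singer connection, invoke Theorem~\ref{ASflattheo} to obtain a parallel null vector field and the pp-wave curvature condition, and then use $\nabla R=0$ to get a constant matrix $Q$. The one place where you add content is the splitting argument showing that indecomposability forces $Q$ to be non-degenerate; the paper simply asserts this, so your version is in fact slightly more complete.
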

\begin{proof} If $(M,g)$ is loally symmetric, the Levi-Civita connection is an Ambrose--Singer connection, and , by
Theorems~\ref{VFtheo} and~\ref{ASflattheo}, it satisfies the curvature condition of a pp-wave as defined in Section~\ref{homplanesec}. Hence $g$ is locally of the form~(\ref{pploc}).  Local symmetry  then yields that
$h(t,x^1,\ldots, x^n)= x^i Q_{ij} x^j$ for a constant, symmetric non degenerate matrix $Q_{ij}$. As a consequence, $(M,g)$ is locally isometric to a Cahen--Wallach space.
\end{proof}

We will now  prove Theorem~\ref{maintheo}. 
This will require a comparison of  a special holonomy  Ambrose-Singer connection $\tnab$ with the Levi-Civita connection of $(M,g)$ as in Lemma~\ref{curvlem}.

\begin{proof}[Proof of Theorem~\ref{maintheo}]
Since the statement of Theorem~\ref{maintheo} is about the universal cover, we assume that
 $(M,g)$ is a simply connected Lorentzian manifold with Levi-Civita connection $\nabla$. Let $\tnab$ be an  Ambrose--Singer connection 
with  $S(X,Y)=\nabla_XY-\tnab_XY$ and $T(X,Y)=-S(X,Y)+S(Y,X)$, the torsion of $\tnab$,  both  $\tnab$-parallel. 
Assume that $\tnab$ 
 has an indecomposable, non-irreducible holonomy algebra. Since $\dim M=m\ge 3$,  by Theorem~\ref{VFtheo}, $\tnab$ admits a parallel null vector field $\xi$.  By Theorem~\ref{vftheo}  also the Levi-Civita connection admits a parallel null vector field $\hat\xi =f\xi$ for a function $f$, so it remains to establish the curvature conditions of a plane wave. First we establish that $(M,g)$ is a pp-wave, i.e.
   the flatness of the connection that is induced on  the screen bundle $\mathcal E=\xi^\perp/\xi$  from the Levi-Civita connection $\nabla$. By Theorem~\ref{ASflattheo}, the connection induced on $\mathcal E$ from $\tnab$ is flat, or equivalently, that 
 \[\tR(V,X,Y,Z)=g(\tR(V,X)Y,Z)=0,\quad\text{ for all $V\in TM$ and $X,Y,Z\in \xi^\perp$.}\] 
 Hence, by Lemma~\ref{curvlem} we have for the curvature of the Levi-Civita connection $\nabla$ that
 \[ R(V,X,Y,Z)=g(S(T(V,X),Y),Z)+g([ S(V),S(X)]Y,Z),\]
 for all $V\in TM$ and $X,Y,Z\in \xi^\perp$. We will now show that at any point $p\in M$ both terms on the right-hand-side of this equation vanish, i.e. we will show that
 \begin{equation}\label{ST}
 S(T(V,X),Y)\in \R\cdot\xi,\quad [ S(V),S(X)]Y\in \R\cdot\xi,\ \text{
 for all $V\in TM$ and $X,Y\in \xi^\perp$.}\end{equation}
  We fix a Witt basis $(e_-,e_1,\ldots, e_n, e_+)$ of $T_pM=V_-\+V_0\+V_+$ as in Section~\ref{algsec} and  such that $\xi|_p=e_-$ and $\xi^\perp|_p=\mathrm{span}(e_-,e_1, \ldots, e_n)$. Then, if $\h$ is the holonomy algebra of $\tnab$ at $p$, we have that $\h\subseteq\so(V_0)\ltimes V_0$ is indecomposable. Since $S$ and $T$ are parallel, $S|_p$ and $T|_p$  are annihilated by $\h$. Now let $V\in T_pM$ and $X,Y,Z\in \xi^\perp|_p$. By Corollary~\ref{Talgcor}, $T(V,X)\in \xi^\perp|_p$, so that by Theorem~\ref{algtheo}, $S(T(V,X),Y)\in \R\xi|_p$. This implies that  $g(S(T(V,X)Y,Z)=0$. Moreover, since $S(X)\in \g_-$, we have that $[S(V),S(X)]\in \g_-$, so that again $[S(V),S(X)]Y\in \R\cdot \xi$ and hence $g([S(v),S(x)]y,z)=0$. Since $p$ was arbitrary, this shows properties~(\ref{ST}) and hence that $R(V,X,Y,Z)=0$
 for all $V\in TM$ and $X,Y,Z\in \xi^\perp$, i.e.~$(M,g)$ satisfies the curvature condition of a pp-wave. 
 
 It remains to show, when $n\ge 4$, that $(M,g)$ is a plane wave, i.e.~that $\nabla_XR=0$ for all $X\in \xi^\perp$. By Lemma~\ref{curvlem} we have that 
 \[0=\tnab_X R=\nabla_X R -S(X)\cdot R.\]
Again working at an arbitrary point $p$ with a Witt basis of $T_pM$, by Theorem~\ref{algtheo}, we have that $S(X)\in \g_-$ for all $X\in \xi^\perp|_p$. Since $(M,g)$ is a pp-wave,  $R|_p\in V^+\wedge V^0 \otimes \g_-$, which implies that for all $X,Y\in \xi^\perp|_p $ and $V\in T_pM$ that
\[(S(X)\cdot R)(Y,V)=[S(X),R(Y,V)]-R(S(X,Y),V)- R(Y,S(X,V))=0,\]
since $\g_-$ is abelian, $S(X,V)\in V_0$ and $S(X,Y)\in \R\cdot \xi|_p$. This shows that $S(X)\cdot R=0$ and completes the proof that $(M,g)$ is a plane wave. By the results cited in Section~\ref{review} it is also locally homogeneous.
\end{proof}

\subsection{The $3$-dimensional case}\label{dim3sec}
The main purpose of this section is to show that the dimension restriction to $m>3$ in 
Theorem~\ref{maintheo} and Conjecture~\ref{conj} is sharp. But first we make an observation that holds in any dimension.
\begin{lemma}\label{hg-lemma}
Let $\h=\g_-\subset \so(1,n+1)$ and $R\in \Lambda^2 V^*\otimes \h$ such that $\h\cdot R=0$. Then $e_-\hook R=0$.
\end{lemma}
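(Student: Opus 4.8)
The plan is to unwind the condition $\h\cdot R=0$ and exploit that $\h=\g_-$ is an \emph{abelian} subalgebra of $\g$. For $X=\overline{x}\in\g_-$ and $u,v\in V$ one has $(X\cdot R)(u,v)=[X,R(u,v)]-R(Xu,v)-R(u,Xv)$. Since $R$ takes values in $\g_-$ and $\g_-$ is abelian, the bracket term $[X,R(u,v)]$ vanishes, so the hypothesis collapses to the purely module-theoretic identity $R(Xu,v)+R(u,Xv)=0$ for all $X\in\g_-$ and all $u,v\in V$.

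First I would substitute $u=e_-$. Since every $\overline{x}\in\g_-$ annihilates $e_-$, this yields $R(e_-,\overline{x}v)=0$ for all $\overline{x}\in\g_-$ and all $v\in V$. Using the identity $\pi_-(X)=Xe_+$, the vectors $\overline{x}e_+=x$ exhaust $V_0$, so $R(e_-,x)=0$ for every $x\in V_0$; together with $R(e_-,e_-)=0$ it remains only to show $R(e_-,e_+)=0$.

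To reach that last component I would instead substitute $u=e_i$ and $v=e_+$ for a fixed basis vector $e_i\in V_0$, which exists because $n\ge 1$. From $\overline{x}e_i=-x_i\,e_-$ and $\overline{x}e_+=x$ the identity becomes $-x_i\,R(e_-,e_+)+R(e_i,x)=0$, that is, $R(e_i,x)=x_i\,R(e_-,e_+)$ for all $x\in V_0$. Choosing $x=e_i$ and invoking the skew-symmetry $R(e_i,e_i)=0$ forces $R(e_-,e_+)=0$. Hence $R(e_-,v)=0$ for all $v\in V$, i.e.\ $e_-\hook R=0$.

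The computation is entirely elementary, so there is no serious obstacle; the only point that needs a little care is this final step, since the component $R(e_-,e_+)$ is invisible to the substitution $u=e_-$ and has to be extracted by feeding a second basis vector into the relation and using skew-symmetry of $R$ together with $n\ge 1$.
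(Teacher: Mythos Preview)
Your proof is correct and follows essentially the same approach as the paper's: both use that $\g_-$ is abelian to reduce $\h\cdot R=0$ to $R(Xu,v)+R(u,Xv)=0$, then evaluate at $(u,v)=(e_-,e_+)$ to kill $R(e_-,e_i)$ and at $(u,v)=(e_i,e_+)$ with $X=\overline{e}_i$ to kill $R(e_-,e_+)$. Your presentation is slightly more expansive (you first derive $R(e_i,x)=x_i\,R(e_-,e_+)$ and then specialise to $x=e_i$), but the substance is identical.
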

\begin{proof}
Let $\h=\g_-=\Span( \overline{e}_1, \ldots, \overline{e}_n)$ for $n=m-2\ge 1$ (with the conventions of Section~\ref{algsec1}), and $R\in \Lambda^2V^*\otimes \g_-$. Since $\g_-$ is abelian, $\g_-\cdot R=0$ implies
\[ 0=(\overline{e}_i\cdot R) (e_-,e_+)=-R(e_-,e_i),\qquad   0=(\overline{e}_i\cdot R)(e_i,e_+)=R(e_-,e_+),\]
so that $R(e_-,v)=0$ for all $v\in V$. Hence, $e_-\hook R=0$.
\end{proof}

From now on assume that $\dim(V)=3$.
First we will see that Theorem~\ref{algtheo} is false in dimension $3$.
\begin{lemma}
\label{example1}
Let $V=\span(e_-,e_1,e_+)$ and  $\h=\R\cdot X=\g_-\subset \so(1,2)$, where 
\[ X:= \overline{e}_1= \begin{pmatrix}
0&-1&0\\ 0&0&1 \\0&0&0\end{pmatrix}.\]
If  $S\in V^*\otimes \so(1,2)$ such that $\h\cdot S=0$, then
\[S(e_-)=a X,
\quad 
S(e_1)= 
\begin{pmatrix}
a&-b&0\\ 0&0&b \\0&0&-a\end{pmatrix} 
,
\quad
S(e_+)= 
\begin{pmatrix}
-b&-c&0\\ -a&0&c \\0&a&b\end{pmatrix}
,
\]
for real numbers $a$, $b$ and $c$. 
In particular, the maximal trivial submodule of $V^*\otimes \g$ on which $\h$ acts trivial is of dimension $3$. 
\end{lemma}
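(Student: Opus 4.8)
The plan is a short direct computation inside the $3$-dimensional Lie algebra $\g=\so(1,2)$. First I would fix the basis $\{E,X,Y\}$ of $\g$, where $E$ is the grading element of Section~\ref{algsec1}, $X=\overline{e}_1$ is the prescribed generator of $\h=\g_-$, and $Y:=X^\top\in\g_+$, and record the bracket relations
\[ [E,X]=-X,\qquad [E,Y]=Y,\qquad [X,Y]=-E,\]
together with the action of $X$ on the Witt basis, namely $Xe_-=0$, $Xe_1=-e_-$ and $Xe_+=e_1$ (consistent with the identity $Xe_+=\pi_-(X)$). In particular $\mathrm{ad}_X$ acts on the basis $E,X,Y$ of $\g$ by $E\mapsto X$, $X\mapsto 0$, $Y\mapsto -E$.

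Next I would rewrite the hypothesis. Since $\h=\R X$, the condition $\h\cdot S=0$ reads $[X,S(v)]=S(Xv)$ for all $v\in V$, and evaluating at $v=e_-,e_1,e_+$ produces the cascade
\[ [X,S(e_-)]=0,\qquad [X,S(e_1)]=-S(e_-),\qquad [X,S(e_+)]=S(e_1).\]
Expanding $S(v)=\alpha(v)E+\beta(v)X+\gamma(v)Y$ and using the description of $\mathrm{ad}_X$ above, the first equation forces $\alpha(e_-)=\gamma(e_-)=0$, hence $S(e_-)=aX$ with $a:=\beta(e_-)$; the second then gives $\alpha(e_1)=-a$, $\gamma(e_1)=0$, hence $S(e_1)=-aE+bX$ with $b:=\beta(e_1)$; and the third gives $\alpha(e_+)=b$, $\gamma(e_+)=a$, hence $S(e_+)=bE+cX+aY$ with $c:=\beta(e_+)$. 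Writing these three endomorphisms as matrices in the Witt basis reproduces exactly the stated forms, and conversely every $S$ of this shape visibly satisfies the cascade. Since $a,b,c$ range freely over $\R$, the space $\{S\in V^*\otimes\g\mid\h\cdot S=0\}$, which is the maximal trivial $\h$-submodule of $V^*\otimes\g$, is exactly $3$-dimensional.

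There is no genuine obstacle here; the only thing to be careful about is the sign in $[X,Y]=-E$, which propagates through the cascade. The point worth emphasising is the contrast with Theorem~\ref{algtheo}: its proof in the type~$2$ case begins by deducing $S(e_-)=0$ from $\dim(\h\cap\g_-)\ge 2$, whereas here, with $n=1$, one finds $S(e_-)=aX$, which may be nonzero, so the invariant module is neither contained in $(V^0\otimes\g_-)\+(V^+\otimes\p)$ nor constrained by~(\ref{algtheoequ}). This is precisely what shows that the hypothesis $n\ge 2$ in Theorem~\ref{algtheo} and Corollary~\ref{Talgcor} cannot be dropped.
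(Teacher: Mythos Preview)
Your proof is correct and follows essentially the same route as the paper: both arguments unwind the single condition $X\cdot S=0$ into the cascade $[X,S(e_-)]=0$, $[X,S(e_1)]=-S(e_-)$, $[X,S(e_+)]=S(e_1)$ and read off the answer. The only organisational difference is that the paper verifies the stated form satisfies the cascade and then asserts that the converse is easy, whereas you expand $S(v)$ in the basis $\{E,X,Y\}$ from the outset and solve the resulting linear system directly; your version is thus slightly more self-contained for the ``only if'' direction, but the computation is the same.
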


\begin{proof}
Since $\h=\R\cdot X$
we have to verify that $X\cdot S=0$. Indeed, 
\[ [X,S(e_-)]=0,\quad [X,S(e_1)] =-aX,\quad   [X,S(e_+)] =S(e_1),\]
so that 
\[ X\cdot S (e_1)=[X, S(e_1)] -S(Xe_1)= -aX +S(e_-)=0,\]
and 
\[ X\cdot S (e_+)=[X, S(e_+)] -S(Xe_+)= S(e_1) -S(e_1)=0.\]
Conversely, it is easy to check that the above $S$ is the most general element  in the submodule of  $V^*\otimes \g$ on which $\h$ acts trivially. This shows that this submodule is of dimension $3$.
\end{proof}
This has the following consequence for trivial submodules of the torsion module. If $T(u,v)=-S(u,v)+S(v,u)$ for an $S$ as in Lemma~\ref{example1}, we get
\[T(e_-,e_1)=2a e_-,\quad T(e_-,e_+)=-2a e_1 -b e_-,\quad
T(e_1,e_+)=2a e_+-be_1-ce_-.\]
 Lemma~\ref{example1} shows why Theorem~\ref{algtheo} does not cover the case $\dim(V)=3$, when $V=\R^{1,2}$. As a consequence,  a torsion tensor $T$ with $\h\cdot T=0$ does not necessarily satisfy the assumptions of Theorem~\ref{curvtheo} and thus Corollary~\ref{curvtheo2}.

Now, let  $(M,g)$ be a $3$-dimensional Lorentzian manifolds with Levi-Civita connection $\nabla$ and curvature tensor $R$. We 
 assume that $\tnab$ is an Ambrose--Singer connection on $(M,g)$ that is defined by $S$ as in Lemma~\ref{example1}. Since $S$ is parallel, it is sufficient to specify $S$ at a point~$o$.
  Let $(e_-,e_1,e_+)$ be a Witt basis of $V=T_oM$, $\h=\R\cdot X=\g_-$ the holonomy algebra of $\tnab$ at $o$ and so that $\tR(e_1,e_+)|_o=X=\overline{e}_1$. 
 By  Lemma~\ref{hg-lemma} every $\tR\in \Lambda^2 V^*\otimes \h$ such that $\h\cdot \tR=0$ satisfies $\e_-\hook \tR=0$. Because of this and of $m=3$, we have that $\tR\in V^0\wedge V^+\otimes \h$, which is of dimension~$1$. Hence, $\tR$ is equal to  (a multiple of) the curvature tensor of a $3$-dimensional Cahen--Wallach space, and therefore
$\tR$ satisfies the Bianchi symmetry $\tR(u,v)w+\tR(v,w)u+\tR(w,u)v=0$. We arrange the basis such that $\tR(e_1,e_+)=\overline{e}_1$. 
As a consequence, by Lemma~\ref{curvlem}, its torsion satisfies,
\[T(T(u,v),w)+ T(T(v,w),u)+T(T(w,u),v)=0\] for all $u,v,w\in TM$.
For an $S$ as in  Lemma~\ref{example1}, we evaluate this equation for $u=e_-$, $v=e_1$ and $w=e_+$ to get
\[0=T(T(e_-,e_1),e_+)+ T(T(e_1,e_+),e_-)+T(T(+,e_-),e_1)=4\, ab\,e_-.\] 
Hence, $S$ has to lie in the subset of $W$ that is defined by the union of the hyperplanes $W_1=\{a=0\}$ and $W_2=\{b=0\}$. 
If $S\in W_1$, then $S$ satisfies the conclusions of Theorem~\ref{algtheo}, so one can proceed in the  proof of Theorem~\ref{maintheo} as in the case $n\ge 4$. Hence, we will focus on the case $a\not=0$, i.e. 
 $S\in W_2$, so that $b=0$, which we will assume from now on. Then 
\begin{equation}\label{counterS}
S(e_-)=a X,
\quad 
S(e_1)= 
a I,
\quad
S(e_+)= 
cX+aX^\top,\quad \text{ where } I= \begin{pmatrix}
1&0&0\\ 0&0&0 \\0&0&-1\end{pmatrix},
\end{equation}
$X=\overline{e}_1$, and real numbers $a$ and $c$. It is $I=-E$, w of $\so(1,2)$  satisfies $[I,X]=X$ and $[I,X^\top]=-X^\top$ and $[X,X^\top]=I$. 

We can use Lemma~\ref{curvlem} to obtain the curvature of the Levi-Civita connection as follows
\begin{eqnarray*}
R(e_-,e_1)&=& S(T(e_-,e_1))+[S(e_-),S(e_1)] =a^2 X,\\
R(e_-,e_+)&=& S(T(e_-,e_+))+[S(e_-),S(e_+)] =-a^2 I,\\
R(e_1,e_+)&=& X+S(T(e_1,e_+))+[S(e_1),S(e_+)] =
(1+2ac)X
+a^2X^\top
\end{eqnarray*}
This shows that when $a\not=0$, then the holonomy algebra of the Levi-Civita connection is of dimension $3$ and hence equal to $\so(1,2)$. Hence $(M,g)$ cannot be plane wave nor have special holonomy. Furthermore,
the Ricci tensor, in the basis $e_-,e_1,e_+$ is given as 
\[
\mathrm{Ric}=\begin{pmatrix}
0&0&-2a^2
\\
0&-2a^2&0
\\
-2a^2&0& 1+2ac,
\end{pmatrix}\]
so that
 the scalar curvature is a {\em negative} multiple of $a^2$ and the metric is Einstein if and only if $1+2ac=0$. Using that 
$\nabla_VR=S(V)\cdot R$ from Lemma~\ref{curvlem}, one further computes that the only non-vanishing derivatives of $R$ are 
\[
\nabla_{e_1}R(e_1,e_+)=\nabla_{e_+}R(e_-,e_+)=a (1+2ac) X, \quad \nabla_{e_+}R(e_1,e_+)=-a (1+2ac) I.\]
Hence,  $(M,g)$ is locally isometric to anti-de~Sitter space if and only if $1+2ac=0$.

In order to describe the locally homogeneous space in the case $a\not=0$, we consider the infinitesimal model defined by the vector space $V=\R^{1,2}$, the inner product defined by the Witt basis $(e_-,e_1,e_+)$ and the pair $\tR$ and $T$, see for example \cite[Section 2.4]{CalvarusoCastrillon-Lopez19}. For this set $\h=\R\cdot X\subset \g$, and 
 $\g=\h\+V$ is the transvection algebra,  with Lie brackets 
\[ [X,v]=Xv,\quad [u,v]:= -\tR(u,v)-T(u,v).\]
Hence, the Lie brackets in $\g$ are
\[
[X,e_-]=0,\qquad [X,e_1]=-e_-,\qquad [X,e_+]=e_1\]
and
\[ [e_-,e_1] =-2a e_-,\qquad [e_-,e_+]= 2a e_1,\qquad [e_1,e_+]=-X-2ae_++ce_-.\]
The derived Lie algebra is $\g'=\mathrm{span}(e_-,e_1,E_+)$, where we set $
E_+:= \tfrac{1}{2a}X+ e_+$,
so that  $\< e_-,E_+\>=1$,  
\[ [e_-,e_1] =-2a e_-\qquad [e_-,E_+] =2ae_1,\qquad [e_1,E_+] =-2aE_++ \tfrac{(2ac+1)}{2a}e_- \]
and therefore $\g'=[\g',\g']$. Because of  $a\not=0$, $\g'$ is locally transitive. A check of the Killing form reveals that it is not definite, so that $\g'$ is isomorphic to $\sl(2,\R)$.
Together with 
Theorem~\ref{VFtheo} get the following conclusion.

\begin{proposition}\label{finalprop}
If  a simply-connected Lorentzian manifold $(M,g)$ of dimension $3$ admits an Ambrose--Singer connection with indecomposable, non-irreducible holonomy, then $(M,g)$ is a plane-wave or locally isometric to a left-invariant metric on $\widetilde{\SL}(2,\R)$ with holonomy algebra $\so(1,2)$ and negative scalar curvature.
\end{proposition}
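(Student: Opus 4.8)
The plan is to combine Theorem~\ref{VFtheo} with the explicit three‑dimensional computations assembled above. Since an Ambrose--Singer connection is in particular locally symmetric, the $m=3$ clause of Theorem~\ref{VFtheo} applies: on the (simply connected) manifold $M$ the connection $\tnab$ admits a parallel null vector field $\xi$, so its holonomy algebra $\h$ at a point $o$ is contained in $\so(1)\ltimes\R=\g_-$; being non‑trivial (the holonomy is non‑irreducible), it equals $\g_-=\R X$ once a Witt basis $(e_-,e_1,e_+)$ of $T_oM$ is fixed with $\xi|_o=e_-$ and $X=\overline{e}_1$. In particular the parabolic $\p$ is excluded from the start.

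Next I would use the parallelism of the curvature $\tR$ and torsion $T$ of $\tnab$. From $\h\cdot\tR=0$ and Lemma~\ref{hg-lemma} we get $e_-\hook\tR=0$, and since $\dim M=3$ the only surviving component is $\tR(e_1,e_+)\in\g_-$; it is non‑zero, as otherwise the holonomy would be trivial, so (after rescaling the frame) $\tR$ is a multiple of the curvature tensor of a $3$‑dimensional Cahen--Wallach space and in particular obeys the torsion‑free first Bianchi identity $\underset{u,v,w}{\mathfrak{S}}\,\tR(u,v)w=0$. On the other hand, writing $S=\nabla-\tnab$ for $\nabla$ the Levi-Civita connection of $g$, the tensor $S$ is $\tnab$‑parallel, hence $\h$‑invariant, so by Lemma~\ref{example1} its value at $o$ is the general element $S_{a,b,c}$ of that lemma, for some real numbers $a,b,c$. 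Feeding $S_{a,b,c}$ together with $\underset{u,v,w}{\mathfrak{S}}\,\tR(u,v)w=0$ into the Bianchi identity \emph{with} torsion (Lemma~\ref{curvlem}(\ref{curvlem2})) and evaluating at $(e_-,e_1,e_+)$ yields $4ab\,e_-=0$, hence $a=0$ or $b=0$. This dichotomy is exactly what must replace, in dimension $3$, the clean conclusion of Theorem~\ref{algtheo}, which Lemma~\ref{example1} shows to fail there.

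I would then treat the two cases separately. If $a=0$, i.e.\ $S\in W_1$, then $S_{0,b,c}$ satisfies $S(e_-)=0$, $S(V_0)\subseteq\g_-$ and $S(e_+)\in\p$, which are precisely the structural features of $S$ used in the proof of Theorem~\ref{maintheo}; carrying that argument through here --- express the Levi-Civita curvature as $R=\tR+S\circ T+[S,S]$ via Lemma~\ref{curvlem}(\ref{curvlem1}), use $S(X)\in\g_-$ for $X\in\xi^\perp$ and $S(\xi^\perp,\xi^\perp)\subseteq\R\xi$ to annihilate all curvature components transverse to $\g_-$, and then $\nabla_XR=S(X)\cdot R=0$ for $X\in\xi^\perp$ --- shows that $(M,g)$ is a plane wave. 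If $a\neq0$, then $b=0$ (so $S\in W_2$), and I would invoke the computations already carried out for $S_{a,0,c}$: the Levi-Civita curvature is $R(e_-,e_1)=a^2X$, $R(e_-,e_+)=-a^2I$, $R(e_1,e_+)=(1+2ac)X+a^2X^\top$, so the Levi-Civita holonomy is all of $\so(1,2)$ and the scalar curvature is a negative multiple of $a^2$; moreover the infinitesimal model $(T_oM,\tR|_o,T|_o)$ gives rise to the Lie algebra $\g=\h\+T_oM$, whose derived subalgebra $\g'=\mathrm{span}(e_-,e_1,\tfrac{1}{2a}X+e_+)$ is three‑dimensional, perfect, locally transitive on $M$ with trivial isotropy and --- by the sign of its non‑degenerate Killing form --- isomorphic to $\sl(2,\R)$ rather than $\su(2)$. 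Hence $(M,g)$ is locally isometric to a left‑invariant metric on $\widetilde{\SL}(2,\R)$ with holonomy algebra $\so(1,2)$ and negative scalar curvature, and the two cases together give the stated alternative.

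The only genuinely new ingredient beyond the machinery of the preceding sections is the failure of Theorem~\ref{algtheo} in dimension $3$ (Lemma~\ref{example1}); the main obstacle is therefore to isolate the exceptional case $a\neq0$, $b=0$ by means of the Bianchi identity and then to recognise the resulting infinitesimal model. Once the transvection brackets are written down, verifying that $\g'$ is perfect, is locally transitive, and has indefinite Killing form is a short mechanical computation, and the case $a=0$ reduces immediately to the argument for Theorem~\ref{maintheo}.
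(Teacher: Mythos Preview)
Your argument is correct and follows essentially the same route as the paper: apply the $m=3$ clause of Theorem~\ref{VFtheo} to obtain $\h=\g_-$, use Lemma~\ref{hg-lemma} and $\dim M=3$ to pin down $\tR$, derive the constraint $ab=0$ from the Bianchi identity with torsion, and then split into the plane-wave case $a=0$ (where the proof of Theorem~\ref{maintheo} goes through verbatim) and the $\sl(2,\R)$ case $a\neq 0$ via the explicit curvature and transvection-algebra computations. One tiny slip: the non-triviality of $\h$ follows from \emph{indecomposability}, not from non-irreducibility as your parenthetical suggests.
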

Finally, we will show that a manifold with an Ambrose--Singer connection that satisfies the assumption of Proposition~\ref{finalprop} does indeed exist. We will give an example of a left-invariant metric on the universal cover of $\SL(2,\R)$ that is not a pp-wave metric but admits and  a non-flat Ambrose--Singer connection with a parallel null vector field, and hence with indecomposable holonomy algebra.
\begin{example}\label{counterex}
Consider $\m:=\R^3$ with Lie bracket defined by
\[ [e_-,e_1] =-2a e_-\qquad [e_-,e_+] =2ae_1,\qquad [e_1,e_+] =-2ae_++ \tfrac{(2ac+1)}{2a}e_- ,\]
where  $e_-,\e_1,\e_+$ is a basis of $\m$.
This Lie algebra satisfies $\m'=\m$ and the Killing form is not definite, so it must be isomorphic to $\sl(2,\R)$. Let $\<.,.\>$ be the Minkowski inner product on $\m$ that is defined by the requirement that $(e_-,e_1,e_+)$ is a Witt basis for it. This inner product defines a left-invariant metric $g$ on the simply-connected Lie group $M$ that has $\m$ as Lie algebra.  The Lie group $M$ is isomorphic to $\widetilde{\SL}(2,\R)$, the universal cover of $\SL(2,\R)$. We identify 
 $g$ with $\<.,.\>$ and $e_-,e_1,e_+$ with the left-invariant vector fields on $M$, which form a Witt basis at every tangent space. The Koszul formula for the Levi-Civita connection gives
\[2\< \nabla_ue_-,v\>=\< [u,e_-],v\> + \< [v,e_-],u\>+ \< [v,u],e_-\>,\]
for left-invariant vector fields $u$ and $v$.
Using this, a computations shows that 
\[
\nabla_{e_-}e_-=0,\qquad \nabla_{e_1}e_-=ae_-,\qquad \nabla_{e_+}e_-=-ae_1.\]
Now we consider the left invariant homogeneous structure given by $S\in \m^*\otimes \so(\m)$ as in~(\ref{counterS}). One can easily check that this is indeed a homogeneous structure, by showing that $\tnab_XY =\nabla_XY-S(X,Y)$ is an Ambrose--Singer connection, i.e. that $\tnab \tR=0$ and $\tnab S=0$. The formulae for $\nabla$ and $S$ then show that the left invariant null vector field corresponding to $e_-$ is parallel for $\tnab$, since
\[S(e_-,e_-)=0,\qquad S(e_1,e_-)=ae_-,\qquad S(e_+,e_-)=-a e_1.\]
It remains to show that $(M,g)$ is not flat. A direct computation reveals that 
$\tR(e_1,e_+)=X$, so that the holonomy algebra is equal to $\R\cdot X$ and hence indecomposable. 

Note that this is also a counterexample to Conjecture~\ref{conj} in dimension  $m=3$. Indeed, constructing the infinitesimal model $(\g,\h)$ with $\h:=\R\cdot X$ as above, then 
\[\g\ =\ \R\cdot X\ltimes \m\ \simeq\   \R\ltimes \sl(2,\R).\] This yields that $M=G/H$ with $ G$ the simply connected Lie group with Lie algebra $\g$ and $H$ the connected subgroup with indecomposable isotropy  algebra $\h$.
\end{example}

\bibliographystyle{abbrv}
\bibliography{GEOBIB}
\end{document}